\def\ga{\alpha}\def\gb{\beta}
\def\gga{\gamma}\def\gG{\Gamma}
\def\gd{\delta}\def\gD{\Delta}
\def\gep{\varepsilon}
\def\gs{\sigma}
\def\gp{\varphi}\def\gP{\Phi}
\def\gO{\Omega}
\def\cB{\mathcal{B}}\def\cF{\mathcal{F}}
\def\cH{\mathcal{H}}\def\cI{\mathcal{I}}
\def\cK{\mathcal{K}}
\def\cS{\mathcal{S}}\def\cD{\mathcal{D}}
\def\cA{\mathcal{A}}
\def\cT{\mathcal{T}}
\def\cZ{\mathcal{Z}}
\def\bA{\mathbb{A}}\def\bB{\mathbb{B}}
\def\bN{\mathbb{N}}
\def\bR{\mathbb{R}}\def\bS{\mathbb{S}}
\def\bX{\mathbb{X}}
\def\bE{\mathbb{E}}
\def\bP{\mathbb{P}}
\def\bY{\mathbb{Y}}
\def\abs#1{\vert #1 \vert} 
\theoremstyle{definition}
\newtheorem{defn}{Definition}[section]
\theoremstyle{plain}
\newtheorem{thm}[defn]{Theorem}
\newtheorem{prop}[defn]{Proposition}
\newtheorem{lem}[defn]{Lemma}
\newtheorem{cor}[defn]{Corollary}
\theoremstyle{remark}
\newtheorem{rk}[defn]{Remark}
\numberwithin{equation}{section}
\def\abs#1{\vert #1 \vert}
\def\t{\tau}
\def\1{\mathbf{1}}
\def\X{\mathbf{X}}
\def\qsT{\hat{T}}
\def\qshuffle{\,\widehat{\shuffle}\,}
\newsavebox{\@brx}
\newcommand{\llangle}[1][]{\savebox{\@brx}{\(\m@th{#1\langle}\)}%
  \mathopen{\copy\@brx\mkern2mu\kern-0.9\wd\@brx\usebox{\@brx}}}
\newcommand{\rrangle}[1][]{\savebox{\@brx}{\(\m@th{#1\rangle}\)}%
  \mathclose{\copy\@brx\mkern2mu\kern-0.9\wd\@brx\usebox{\@brx}}}
\newcommand{\llbracket}[1][]{\savebox{\@brx}{\(\m@th{#1[}\)}%
  \mathopen{\copy\@brx\mkern2mu\kern-0.9\wd\@brx\usebox{\@brx}}}
\newcommand{\rrbracket}[1][]{\savebox{\@brx}{\(\m@th{#1]}\)}%
  \mathclose{\copy\@brx\mkern2mu\kern-0.9\wd\@brx\usebox{\@brx}}}
\title{Quasi-geometric rough paths and \\rough change of variable formula}
\author{Carlo Bellingeri \thanks{TU Berlin, \texttt{bellinge@math.tu-berlin.de}}}%
\date{}
\begin{document}
\maketitle
\begin{abstract}
Using some basic notions from the theory of Hopf algebras and quasi-shuffle algebras, we introduce rigorously a new family of rough paths: the quasi-geometric rough paths. We discuss their main properties. In particular, we will relate them with iterated Brownian integrals and the concept of ``simple bracket extension'', developed in the PhD thesis of David Kelly \cite{kelly2012ito}. As a consequence of these results, we have a sufficient criterion to show for any $\gga\in (0,1)$ and any sufficiently smooth function $\gp\colon \bR^d\to \bR$ a rough change of variable formula on any $\gga$-H\"older continuous path $x\colon [0, T]\to \bR^d$, i.e. an explicit expression of $\gp(x_t)$ in terms of rough integrals.
\end{abstract}
\tableofcontents
\section{Introduction}
Since it began with the seminal paper \cite{Chen54} by Kuo-Tsai Chen, the theory of iterated integrals for smooth paths has shown a rich algebraic structure, pointing out deep connections between algebraic geometry and differential equations, see \cite{Cartier88}. In more recent years, the corresponding theory of iterated stochastic integrals has led to the appearance of new algebraic structures, enriching the previous framework with additional connections with combinatorics and stochastic analysis, see \cite{Rota_Wallstrom97}. 

We can trace in \cite{gaines94} a simple example to describe how stochastic integration augments the standard algebraic structures of smooth paths. We will recall it using a slightly different notation. Given a $\bR^d$ Brownian motion $B=(B^1,\cdots , B^d)$, Gaines  considered  for any $t>0$ and any word $w=i_1\cdots i_n$ for some  $ i_1, \cdots, i_n\in \{1\,,\cdots,d \} $ the random variables
\begin{equation}\label{def_I_and_S}
 S_t(w)= \int_{\gD^n_{0t}}d^{\circ }B^{i_1}_{s_1}\cdots d^{\circ}B^{i_n}_{s_n}\,,\quad I_t(w)= \int_{\gD^n_{0t}}dB^{i_1}_{s_1}\cdots dB^{i_n}_{s_n}\,,
\end{equation}
where $\gD^n_{0t}= \{0\leq s_1<\cdots <s_n\leq t\}$ and  $dB$, $d^{\circ} B$  denote respectively the It\^o-Wiener and  Stratonovich integral. A practical tool introduced in that paper to express the relations induced by these variables can be obtained by introducing two specific algebraic operations.

We start from the finite set $\{1\,,\cdots \,, d,\circ \}$ and  we consider $V^{\circ}$ the free real vector space generated the set of words constructed from $\{1\,,\cdots \,, d,\circ \}$ union the empty word $\1$. The vector space $V^{\circ}$ admits a first natural commutative operation $\shuffle$ over it. We define it recursively from the identities $\1 \,\shuffle\, v= v\, \shuffle\,\1 =v$ for any $v\in V^{\circ} $ and for any couple of words $v,w$ and letters $a,b\in \{1\,,\cdots \,, d,\circ \}$ we impose the relation
\begin{equation}\label{defn_shuffle}
va \shuffle wb= (v \shuffle wb)a + (va \shuffle w)b\,.
\end{equation}
The resulting operation is called \emph{shuffle product} and the resulting structure $(V^{\circ}, \shuffle)$ is one example of \emph{shuffle algebra}, a standard object in modern commutative algebra (see \cite{reutenauer1993free} for further properties). In addition to this,  a second operation  $ \diamond$, called \emph{It\^o product}, has been introduced in  \cite{gaines94}. To obtain it, we keep the same relations with $\1$ and we slightly modify   \eqref{defn_shuffle} as follows
\begin{equation}\label{defn_diamond}
ua\diamond vb:= (u\diamond vb)a+ (ua\diamond v)b+ (\mathbbm{1}_{\{1\,,\cdots \,, d\}}(a)\delta_{ab}) (u\diamond v)\circ\,,
\end{equation}
where $\gd_{ab}$ is the usual Kronecker's delta. The reason behind the definition of these products is simply motivated by an interesting property. Looking at the random values $S_t(\cdot)$, $I_t(\cdot)$ from \eqref{def_I_and_S} as linear functional from $V^{\circ}$ to $\bR$, where we replace every occasion of the letter $\circ$ with a Lebesgue integral against $ds$,  one has from \cite[Prop 2.2, Prop 2.3]{gaines94} that these functions are two \emph{real characters} over the two commutative algebras $(V^{\circ}, \shuffle)$ and $(V^{\circ}, \diamond)$ respectively, meaning that for any $w, v\in V^{\circ}$ one has the a.s. identities  
\begin{equation}\label{charachter_brownian}
S_t(v\shuffle w)= S_t(v )S_t( w) \,, \quad I_t(v\diamond w)= I_t(v )I_t( w)\,.
\end{equation}
Recalling from \cite{Chen54} that the iterated integral of smooth paths are naturally character with respect to the shuffle product, the nature of Brownian integration is intimately associated with a different type of algebraic structure.

The It\^o product is the first example of a \emph{quasi-shuffle product}, a notion which was not formulated rigorously when \cite{gaines94} was published. Introduced in the seminal paper \cite{hoffman2000} and extended in \cite{hoffman17}, the theory of \emph{quasi-shuffle algebras} was designed to describe the intrinsic relations between the multiple-zeta vlaues. However, many examples from stochastic analysis, like Levy processes and semi-martingales, satisfy naturally quasi-shuffle relations, see \cite{curry14,kurusch15, Ebrahimi-Fard2015}.

Iterated integrals and stochastic processes are not simply related via quasi-shuffle algebras. Introduced in \cite{lyons1998}, the theory of \emph{rough paths} was built to formulate rigorously a differential equation of the form
\begin{equation}\label{rough_differential_equation}
dY_t=  \sum_{i=1}^d f_i(Y_t)dx^i_t\,\quad Y_0=y_0\in \bR^e
\end{equation}
where the source $x=(x^1,\cdots, x^d)\colon [0,T]\to \bR^d$ is an generic irregular path (e.g. Brownian trajectories) and  $f_i\colon \bR^e\to \bR^e$ are smooth vector fields.  The knowledge of the path $x$ is not enough to establish  existence nor the uniqueness of \eqref{rough_differential_equation}. To fill this lack of well-posedness, we can replace $x$ with a \emph{geometric rough path} $X$, see Definition \ref{defn_geom_rough} below, a finite family of functions containing the increments of $x$ and other non-differentiable functions of it. Moreover equation \eqref{rough_differential_equation} is rewritten in terms of a \emph{rough differential equation}, an integral equation concerning an explicit notion of integration against a rough path $X$. The main properties satisfied by these additional functions are shuffle-type identities, the same algebraic relations of smooth iterated integrals, thereby extending Chen's theory to a non-smooth setting. Even if these notions are purely deterministic in their formulation, the theory of stochastic processes provides many explicit examples of geometric rough paths structures, enriching the tools of stochastic analysis, see \cite{Friz2020course} for a general introduction.

Several extensions of geometric rough paths were presented in the literature, to take into account more general rough differential equations. We recall in particular \emph{branched rough paths}, see \cite{gub10,hairer2015geometric} and  Definition \ref{defn_branched} below, where the key features of geometric rough paths are extended to the Butcher-Connes-Kreimer Hopf algebra of rooted forests. We mention also the family of \emph{planarly branched rough paths}, see \cite{Manchon2018}, which are used to formulate rough differential equations on a homogeneous manifold. However, except for an oral talk by David Kelly at the ICMAT in Madrid, held in 2013, no results between quasi-shuffle algebras and rough paths have been studied, despite the natural quasi-shuffle nature of iterated It\^o-Wiener Brownian integrals.

In this paper, we introduce rigorously the notion of \emph{quasi-geometric rough paths}\footnote{This name was given by David Kelly in the aforementioned talk.} and their main properties. Particularly, we will use them to prove the following results:
\begin{itemize}
\item Using quasi-geometric Lyons' extension theorem in Proposition \ref{ext_lyo_quasi}, we obtain an alternative proof of the correction formula between iterated It\^o-Wiener Brownian integrals and iterated Stratonovich Brownian integrals, as expressed in Corollary \ref{first_main_thm}.
\item For any generic $\gga$-H\"older continuous path $x\colon [0, T]\to \bR^d$ which is associated to branched rough path, we obtain in Theorem \ref{big_thm} a sufficient condition for a rough change of variable formula for branched rough paths. Moreover, in Theorem \ref{quasi_shuffle_change_thm} we obtain another rough change of variable formula when $x$ is associated to a  \emph{quasi-geometric bracket extension}, see Definition \ref{Def_quasi_geom_bracket}  below.
\end{itemize}
Whereas the first result provides alternative reasoning to describe the classical relations between $I_t(\cdot)$ $S_t(\cdot)$ given in  \eqref{def_I_and_S}, the second result deserves a deeper explanation.

Let $\gp\colon \bR^d\to \bR$ be a smooth function and $x\colon [0,T]\to \bR^d$, $x=(x^1,\cdots, x^d)$ a $C^1$ path. We call \emph{classical change of variable formula} the integral version of the chain rule,  i.e. the  identity
\begin{equation}\label{classic_change}
\gp(x(t))= \gp(x(s))+ \sum_{i=1}^d\int_s^t \partial_{x_i}\gp(x(r))\dot{x}^i(r)dr\,.
\end{equation}
This formula is a cornerstone of standard calculus, holding if and only if $x$ is absolutely continuous function. In particular, when $x$ does not satisfy this property, the integral in \eqref{classic_change} might not be well defined because $z$ is not a.e. differentiable and Lebesgue integration theory is not useful any more. Surprisingly, it is still possible to write a generalised change of variable formulae, provided that we radically change our notion of integration. A remarkable example where this phenomenon happens comes from stochastic analysis when $x= \{x_t\}_{t\geq 0}$ is a realisation of a continuous semi-martingale, whose trajectories are a.s. not a bounded variation function.  One of the big achievements of stochastic calculus was to show that even in this case there is a new change of variable formula for $x$, the well celebrated \emph{It\^o formula} (see e.g. \cite[Chap. 4]{revuz2004continuous})
\begin{equation}\label{ito_classic_change}
\gp(x_t)= \gp(x_s)+ \sum_{i=1}^d\int_s^t \partial_{x_i}\gp(x_r)dz^i_r+ \frac{1}{2} \sum_{i,j=1}^d\int_s^t \partial_{x_ix_j}\gp(x_r)d[x^i, x^j]_r\,.
\end{equation} 
To achieve this new identity,  it is necessary to develop the whole theory of stochastic integration, where the path-wise nature of the integration is replaced with the probabilistic nature of $x$, and the resulting formula looks different because it involves a term with the second-order derivatives of $\gp$. 
 
Motivated by this fundamental result, we explore in the rough change of variable formula the same type of problem when $x$ is a generic $\gga$-H\"older path which is associated to a branched rough path $X$. The choice of a branched rough path is motivated by the absence of apriori relationships between the products of the coordinates of $x$ and the additional components of $X$. A first general theory which expresses the rough change of variable formula and a more general identity on branched rough differential equations was given in the last chapter of David Kelly's PhD Thesis \cite[Chap. 5]{kelly2012ito}. In its formulation Kelly introduced the key-notion of \emph{simple bracket extension}, see Definition \ref{bracket}.  This condition allows to obtain an extremely general formula Theorem \ref{chg_thm} but at the same time, this notion requires to validate some additional properties, which make this definition more arduous for applications. On the other hand, we  propose the concept of \emph{quasi-geometric bracket extension} see Definition \ref{Def_quasi_geom_bracket}, which we can explicitly link to simple bracket extension in Theorem \ref{big_thm} and they imply themselves a rough change of variable formula with quasi-geometric terms, as explained in Theorem \ref{quasi_shuffle_change_thm}.

The present paper is organised as follows. We first recall in Section $2$  the main properties of rough paths theory using Hopf algebras to have a general framework which includes shuffle, quasi-shuffle and branched structures in a unified setting. Then we introduce in Section $3$ the notion of quasi-geometric rough paths and its link with the correction formula in Corollary \ref{first_main_thm}. We conclude the paper with a detailed study of the rough change of variable formula, by recalling the main ideas behind Kelly's theory and the general properties of quasi-geometric bracket extensions. Most of the presented results are deterministic but we can apply them to the realisation of any a.s. H\"older continuous stochastic process. We remark that a recent paper \cite{Cont2019} studies the same problem of rough change of variable formulae, establishing some connections with F\"ollmer calculus \cite{follmer81}. 
\subsection*{Acknowledgements}
The author was supported by DFG Research Unit FOR2402. Moreover, the author is very grateful to Lorenzo Zambotti, Nikolas Tapia, Charles Curry, Frédéric Patras and Kurusch Ebrahimi-Fard for many suggestions concerning the theory of rough paths and quasi-shuffle algebras. This project was conceived during the research week the author spent at NTNU in Trondheim University, which received support from Campus France, PHC Aurora 40946NM. The author wrote some parts of this paper while he was doing his PhD at Sorbonne Université, working at the Laboratoire de Probabilités, Statistique et Modélisation, UMR 8001.

\section{Rough paths and Hopf algebras}\label{section1}
To have a synthetic description of different families of rough paths, we recall the general definitions and construction of  rough paths, see \cite{Tapia2018, Manchon2018}, when they take value over a generic commutative, connected and graded Hopf algebra $\cH$. 
\subsection{Algebraic preliminaries}\label{algebra}
For an introduction on Hopf Algebras we refer to \cite{Manchon2008,sorindascalescu2000}. Loosely speaking, a commutative Hopf algebra is just a specific type of commutative bialgebra, i.e. a vector space $\cH$ over a field $k$, which we will always assume $\bR$ in our context, endowed with two fundamental operations: a commutative and associative \emph{product} $m\colon \cH\otimes \cH\to \cH$ (whose operation will be denoted as a juxtaposition) and a linear map $\Delta\colon  \cH\to \cH\otimes \cH $ called \emph{coproduct} which satisfies the coassociativity identity
\begin{equation}\label{eq:coassociativity}
(\text{id}\otimes \Delta)\Delta= (\Delta\otimes \text{id})\Delta\,
\end{equation}
and  some  compatibility relations between $m$ and $\Delta$. The main consequence of this conditions is that we can use the coproduct $\Delta$ to define the \emph{convolution product} $*\colon \cH^*\otimes \cH^*\to \cH^*$ on $\cH^*$, the algebraic dual of $\cH$, trough the identity
\begin{equation}\label{eq:convolution_product}
\langle \alpha*\beta, h \rangle:=\langle\alpha \otimes \beta, \Delta h \rangle\,,
\end{equation}
where $\langle \cdot , \cdot\rangle $ is the canonical pairing between $\cH^*$ and $\cH$. As always we suppose that there exists a unity $\1\in \cH$ and a \emph{counity} $ \1^*\in \cH^{*}$, which satisfies the identity 
\begin{equation}\label{eq:counity}
(\text{id}\otimes \1^*)\Delta= (\1^*\otimes \text{id})\Delta= \text{id}\,. 
\end{equation}
The notation for the counity is chosen because we can fix $\1^*$ as the only element of $(\cH)^*$ satisfying $\1^*(\1)=1$ and zero elsewhere. Using  identity \eqref{eq:counity} and the choice of the counity, $\Delta$ can be always written as
\begin{equation}\label{defn_coproduct}
\gD h= \1\otimes h+ h\otimes \1+ \gD'h\,.
\end{equation}
for some application $\gD'\colon \text{Ker}(\1^*)\to \text{Ker}(\1^*)\otimes \text{Ker}(\1^*)$. We call it \emph{reduced coproduct}. Following the definition of $\gD'$ and the properties of $\gD$, it is trivial to show that the reduces coproduct satisfies also 
\begin{equation}\label{eq:coassociative_reduced_coprod}
(\text{id}\otimes \Delta')\Delta'= (\Delta'\otimes \text{id})\Delta'\,.
\end{equation}
We call a \emph{character} on $\cH$ every non-zero linear map $X\colon\cH\to\bR$ such that
\[ \langle X, hk\rangle = \langle X,h\rangle\langle X,k\rangle\,. \]
for all $h,k\in\cH$. We denote by $G$ the set of all characters on $\cH$. Generally the triplet $(G,*,\1^*)$ is only a semi-group when  $\cH$ is a bialgebra. The main property in the definition of a Hopf algebra is the existence of an additional linear map $\cA\colon \cH\to \cH$ called \emph{antipode} satisfying the identity
\[m(\text{id}\otimes \cA)\Delta h= m(\cA\otimes \text{id})\Delta h= \1^*(h) \1\,,\]
for every $h\in \cH$. Using its properties, it is a straightforward result to show that the antipode is unique and $G$ is a group whose inverse is given by  $g^{-1}=g\circ \cA$ (see \cite[Remark 4.2.3]{sorindascalescu2000}).

The Hopf algebra we will consider are also \emph{locally finite graded} and \emph{connected}, that is  we can decompose them as the direct sum of vector spaces
\begin{equation}\label{eq:grading}
\cH= \bigoplus_{n=0}^{\infty} \cH_n 
\end{equation}
where $\cH_0=\langle\1\rangle$ is the  linear space generated by $\1$ and every vector space $\cH_n$ is finite dimensional. The operations $m$ and $\Delta$ and $\cA$ are also compatible with the grading, meaning that they satisfy the properties
\begin{equation}\label{eq_compatibility}
m\colon \cH_n\otimes \cH_m\to \cH_{n+m}\,,\quad \Delta\colon \cH_n\to \bigoplus_{\substack{p+q=n\\ p,q\geq 0}}\cH_p\otimes \cH_q\,,\quad  \cA(\cH_n)\subset \cH_n\,.
\end{equation}
A fundamental result in the algebraic literature (see again \cite[Chapter 4]{sorindascalescu2000}) shows that every  locally finite graded and connected bialgebra admits an antipode $\cA$, so in this case a Hopf Algebra is identified by describing  only the operations $m$ and $\Delta$. Moreover the resulting reduced coproduct $\gD'$ satisfies
\begin{equation}
\gD' \cH_n \subset \bigoplus_{\substack{p+q=n\\ p,q\geq 1}}\cH_p\otimes \cH_q\,.
\end{equation}
Since every subspace $\cH_n$ is finite dimensional, an equivalent procedure to encode the grading is the existence of a countable set $\cB\subset \cH$ containing $\1$ and a function $g \colon \cB\to \bN$ satisfying the following properties:
\begin{itemize}
\item[$1h)$] $g^{-1}(0)=\{\1\}$ and $g^{-1}(n)$ is finite for every $n\geq 1$; 
\item[$2h)$] for any $n\geq 0$ one has the identity
\begin{equation}\label{eq:grading_g}
\cH_n=\langle g^{-1}(n)\rangle. 
\end{equation}
\end{itemize}
We call the couple $(g, \cB)$ a \emph{grading for $\cH$}. The graded structure allows to define a "finite-dimensional" version of  the character group $G$. For any $N\geq 1$ we introduce the vector space
\begin{equation}\label{eq:grading2}
\cH^N= \bigoplus_{n=0}^{N} \cH_n
\end{equation}
Thanks to the properties of $\gD$ in \eqref{eq_compatibility}, the triple $(\cH^N, \gD, \1^*)$ is still a coalgebra and using the identity \eqref{eq:grading_g}, we deduce that the set  $\cB^N:= g^{-1}(\{0\,,\cdots\,, N\})$ is a basis of $ \cH^N$ for every $N\geq 1$. Dualising this property and using the hypothesis on $\cH$, the triple  $((\cH^N)^*, *, \1^*) $ becomes a finite dimensional graded algebra which can be canonically written as
\begin{equation}\label{eq:grading_dual}
(\cH^N)^*=\bigoplus_{n=0}^{N} (\cH_n)^*\,
\end{equation}
and the set $\cB^N$ is identified with the dual basis of $(\cH^N)^*$ (every element of it will be denoted by $u^*$ where $u\in \cB^N$). We say that $X\in(\cH^N)^*\setminus\{0\}$ is a \emph{truncated character} on $\cH_N$ if the identity
\begin{equation}\label{eq:charachter_truncated}
 \langle X,xy\rangle=\langle X,x\rangle\langle X,y\rangle 
 \end{equation}
holds for all $x\in\cH_{n},y\in\cH_{m}$ with $n+m\le N$. We call $G^N$ the set of truncated characters on $\cH_N$. Using the same operation of the convolution product, it turns out that $(G^N,*, \1^*)$ is still a group  and the inverse is  given using again the antipode $\cA$. 

There is a natural identification  of the group $G^N$  as a family of automorphism of the vector space $\cH_N$. Indeed for every $X\in G^N$ we  consider the linear map $\gG_{X}\colon \cH^N\to \cH^N$, defined by 
\begin{equation}\label{eq_operators}
\gG_X h:= (X^{-1}\otimes \text{id})\gD h\,.
\end{equation}
Using the properties of $\gD$ it is straightforward to show that these operators are invertible and the application $X\to \gG_X$ is an injective homomorphism. We will henceforth look at $G^N$ in both ways. We recall that $G^N$ is indeed a Lie Group whose Lie algebra has also an explicit description. Looking at the properties of $(\cH^N, G^N)$ we also deduce that this couple is also an example of a regularity structure (see \cite{Hairer2014} for the definition) as it was already pointed out in \cite[Lemma 4.18]{Hairer2014}.

\subsection{Weighted rough paths and controlled rough paths}
In what follows, we suppose given a locally finite graded and connected Hopf algebra $\cH$ where we fixed a grading $(g, \cB)$ for $\cH$, called \emph{natural grading}. We use the shorthand notation $\vert \cdot\vert$ to denote the function $g$ which we will call it \emph{natural weight}. The notion of a rough path wants to encode into a formal object all the \emph{increments} and some non linear functional of a $\gga$-H\"older path where $\gamma\in (0,1)$. 
\begin{defn}  \label{dfn:genRP}
Let $\gamma\in (0,1)$ and  $N_{\gamma}=\lfloor\gamma^{-1}\rfloor  $ be the biggest integer  $N\gamma\leq 1$. A \emph{$\gamma$-rough path over $\cH$} is a function $X\colon[0,T]^2\to (\cH^{N_{\gamma}})^*$ satisfying these three properties:
\begin{itemize}
\item for all $x\in\cH_{n}$, $y\in\cH_{m}$ with $n+m\le N_{\gamma}$ one has 
\begin{equation}\label{eq:charachter_truncated_rough}
\langle X_{st},xy\rangle=\langle X_{st},x\rangle\langle X_{st},y\rangle \,
\end{equation}
for any $s,t\in [0,T]$;
\item for any $s,u,t\in[0,T]$ one has the so called Chen's identity
\begin{equation}\label{eq:chen}
X_{su}* X_{ut}=X_{st}\,;
\end{equation}
\item for all $u\in \cB^{N_{\gamma}}$ 
\begin{equation}\label{eq:genrpbound}
\sup_{s\neq t\in [0,T]^2}\frac{\abs{\langle X_{st},u\rangle}}{\abs{t-s}^{\gga\abs{u}}}<+\infty\,.
\end{equation}
\end{itemize}
Denoting  by $\{e_i\}_{i=1,\ldots,d}$ the  basis of $\cH_1$ induced by $g$, if a path $x\colon[0,T]\to\bR^d$, $x= \{x^i\}_{i=1,\ldots,d}$ satisfies $x^i_t-x^i_s=\langle X_{st},e_i\rangle$, we say that $X$ is a rough path over $x$.
\end{defn}

\begin{rk}
The notion of $\gamma$-rough path extends the usual notion of  increments of a $\gamma$-H\"older path $x\colon[0,T]\to\bR^d$. Indeed every $e_i$ satisfies by construction of $\gD$ and $\gD'$
\[\gD e_i= \1\otimes e_i+ e_i\otimes \1\,.\]
Therefore the identity \eqref{eq:chen} yields
\[\langle X_{st},e_i\rangle= \langle X_{su},e_i\rangle+ \langle X_{ut},e_i\rangle\,\]
and there exists a unique $\gamma$-H\"older path $x'\colon[0,T]\to\bR^d$ satisfying $x'_0=0$ such that $X$ is over $x'$. More generally, a function $X\colon[0,T]^2\to \cH^{N_{\gamma}}$ is a $\gamma$-rough path over if and only if there exists a unique path $\bX\colon [0,T]\to G^{N_{\gamma}}$ satisfying $\bX_0= \1^*$ which is $\gamma$-H\"older with respect to an intrinsic metric defined on  $G^{N_{\gamma}}$ (see \cite[Prop 3.3]{Tapia2018}).
\end{rk}
To model a  rough path where every function $\{ \langle X_{st},u\rangle\}_{u\in  \cB^{N_{\gamma}}}$  is allowed to have different H\"older exponents, we introduce the following notion of compatible weight.
\begin{defn}
Let  $\omega\colon \cB\to \bN$ be a generic surjective function. We say that $\omega$ is \emph{compatible weight} for $\cH$ if for every $N\geq 1$ one has the inclusions  
\begin{equation}
\cB^N_{\omega}\subset \cB^N\,, \quad \Delta \cH^{N}_{\omega} \subset \cH^{N}_{\omega} \otimes \cH^{N}_{\omega},
\end{equation}
where $\cB^N_{\omega} := \omega^{-1}(\{0\,,\cdots\,, N\})$  and $\cH^{N}_{\omega} $ is the vector space generated by $\cB^N_{\omega}$.
\end{defn}
Using a standard terminology in algebra, If $\omega$ is a compatible weight then $(\cH^{N}_{\omega}, \gD, \1^*)$ is a \emph{subcoalgebra} of $(\cH^N, \gD, \1^*)$. We also write it as the direct sum
\[\cH^{N}_{\omega}=\bigoplus_{n=0}^{N} \cH_{n,\omega}^N\,, \quad \cH_{n,\omega}^N:=\cH_{n}\cap\cH^{N}_{\omega}\,.\]
Considering $(\cH^{N}_{\omega})^*$, the hypothesis of compatibility implies the dual algebra $((\cH^{N}_{\omega})^*, *, \1^*)$ is a subalgebra of $((\cH^N)^*, *, \1^*)$ but as in case of $\cH^{N}$, $\cH^{N}_{\omega}$ is not an algebra in general. The hypothesis of surjectivity for $\omega$ is imposed so that we recover also a graded coalgebra structure on $\cH$ and to avoid trivial cases. Similarly to the previous definition, we can equivalently define $G^N_{\omega}$, the group of truncated character on $\cH^{N}_{\omega}$ and we obtain the corresponding notion of rough path associated to a compatible weight.
\begin{defn}\label{def_inhom_rough_path}
Let $\gamma\in (0,1)$ and  $N_{\gamma}\geq 1$ defined as before. Let also  $\omega$ be  a compatible weight for $\cH$. A \emph{$\gamma$-weighted rough path over $\cH$ with respect to $\omega$} is a function $X\colon[0,T]^2\to (\cH^{N_{\gamma}}_{\omega})^*$ satisfying Chen's identity \eqref{eq:chen} at the level of $(\cH^{N_{\gamma}}_{\omega})^*$ and the following two properties 
\begin{itemize}
\item for all $x\in\cH^{N_{\gamma}}_{\omega}$, $y\in\cH^{N_{\gamma}}_{\omega}$ such that $xy\in \cH^{N_{\gamma}}_{\omega}$ one has 
\begin{equation}\label{eq:charachter_truncated_rough_in}\langle X_{st},xy\rangle=\langle X_{st},x\rangle\langle X_{st},y\rangle \,;
\end{equation}
for any $s,t\in [0,T]$;
\item for all  $u\in \cB^{N_{\gamma}}_{\omega}$
\begin{equation}\label{eq:genrpbound_in}
\sup_{s\neq t}\frac{|\langle X_{st}, u\rangle|}{\abs{t-s}^{\gga \omega(u) }}<+\infty\,.
\end{equation}
\end{itemize}
Denoting  by $\{e_i\}_{i=1,\ldots,d}$ the  basis of $\cH_{1,\omega}^{N_{\gamma}} $, if a path $x\colon[0,T]\to\bR^d$, $x= \{x^i\}_{i=1,\ldots,d}$ satisfies $x^i_t-x^i_s=\langle X_{st},e_i\rangle$, we say that $X$ is over $x$.
\end{defn}
\begin{rk}
This notion generalises the H\"older exponents of the functions $s,t\mapsto\langle X_{st}, u\rangle$ according to $\omega$ and it extends Definition \ref{dfn:genRP} (it is sufficient to consider $\omega=g$). Some explicit examples of $\gga$-weighted rough paths will be presented in the following sections. Similar families of rough paths with similar properties were presented in \cite{Tapia2018} with the denomination of \emph{anisotropic rough paths} and \cite{Gyurko2016} with \emph{$\pi$-rough paths}.
\end{rk}
Given the notion of a rough path , we introduce the associated space of paths that behave ``locally" like a rough path,  where we can define some analytic operations. This definition extends standard results contained in \cite{Gubinelli2004,Friz2020course,gub10,hairer2015geometric} in our algebraic context.
\begin{defn}\label{defn_contr_geo}
Let $X$ be a $\gga$-weighted rough path over $\cH$ with respect to some compatible weight $\omega$. For any $1\leq N \leq N_{\gamma}+1 $ a path $Y\colon[0,T]\to \cH^{N-1}_{\omega}$ is said a \emph{weighted controlled rough path} if for any  $u \in \cB^{N-1}_{\omega}$ the path $t\mapsto \langle u^*,Y_t\rangle$ is a $\gamma$-H\"older function and one has
\begin{equation}\label{contr_eq}
\sup_{s\neq t\in [0,T]^2}\frac{\abs{\langle u^*,Y_t\rangle- \langle X_{st}* u^*, Y_s\rangle}}{|t-s|^{(N_{\gga}-\omega(u))\gga}}<+\infty\,.
\end{equation}
In case $X$ is $\gamma$-rough path over $\cH$, we call controlled rough path every path $Y\colon[0,T]\to \cH^{N_{\gga}-1}$ satisfying the same property \eqref{contr_eq}, where we replace $\omega$ with the natural weight $|\cdot|$. Depending on the nature of $X$, we denote by  $\cD^{N\gamma}_X$ the space of weighted controlled rough paths/controlled rough paths. We say that a path $y\colon[0,T]\to\bR$ is controlled by $X$ if there exists a weighted  controlled rough path/controlled rough path $Y$ with $N>1$ which satisfies $y_t=\langle \1^*,Y_t\rangle$.
\end{defn}

\begin{rk}
Using the equivalent formulation of  $G^N_{\omega}$ in terms of a group,  we can  identify every $\gamma$-weighted rough path $X$ to the family of operators $\{\Gamma_{rv}\}_{r,v\in [0,T]^2}\colon \cH^{N_{\gamma}-1}_{\omega} \to  \cH^{N_{\gamma}-1}_{\omega}$ defined by 
\[\Gamma_{rv}:=  ((X_{rv})^{-1}\otimes id )\Delta\,,\]
where the inverse calculated with respect to the group $G^{N_{\gamma}}_{\omega}$. By means of Chen relations \eqref{eq:chen} we have the identity $(X_{rv})^{-1}= X_{vr}$ and  the equation \eqref{contr_eq} can be rewritten as
\begin{equation}\label{contr_eq_Gamma}
\sup_{s\neq t\in [0,T]^2}\frac{\abs{\langle u^*,Y_t - \gG_{ts} Y_s\rangle}}{|t-s|^{(N-\omega(u))\gga}}<+\infty\,.
\end{equation}
This way of reformulating Definition \ref{defn_contr_geo}, together with Definition \ref{def_inhom_rough_path}, allows us to rephrase these concepts via the theory of regularity structures again (see \cite{Hairer2014}). In particular, every  $\gga$-weighted rough paths $X$ is an example \emph{model} over the regularity structure $(\cH^{N_{\gamma}}_{\omega},G^{N_{\gamma}}_{\omega} )$ see \cite[Def. 2.17]{Hairer2014}. Furthermore, weighted controlled rough paths are a specific example of \emph{modelled distributions} \cite[Def. 3.1]{Hairer2014}. Using some standard argument of rough path theory (see e.g. \cite[Thm 4.3]{Manchon2018}) we can also uniquely extend $\gG$ to a family of linear maps $\{\Gamma_{rv}\}_{r,v\in [0,T]^2}\colon \cH\to \cH$ which we denote in the same way.
\end{rk}

In order to extend the notion of weighted controlled rough path above a vector-valued path $y\colon [0,T]\to \mathbb{R}^e$, we usually consider paths $Y$ taking values in $(\cH^{N-1}_{\omega})^e$ and we reinterpret  $\langle u^*,Y_t\rangle$ as a vector of $\bR^e$ which we denote by
\[ \langle u^*,Y_t\rangle=(\langle u^*,Y_t\rangle_1,\dotsc,\langle u^*,Y_t\rangle_e). \]
Then we require the bound \eqref{contr_eq} component-wise. We denote this space by $(\cD^{N\gamma}_X)^e$.

Weighted controlled rough paths are the main objects where we can extend the usual operation of composition and integration in this wider setting. In case of composition, for any fixed $Y\in (\cD^{N\gamma}_X)^e$ over a path $y\colon [0,T]\to\bR^e$ and a smooth function $ \gp\colon \bR^e\to \bR$, we want to define an element $Y^{\gp}\in \cD^{N\gamma}_X$ such that $\langle \1^*,  Y^{\gp}_t\rangle=\gp(y_t)$. To define it, we introduce the so-called \emph{lifting} of $\gp$, $\gP\colon (\cH^{N-1}_{\omega})^e\to \cH^{N-1}_{\omega}$ given for any $a\in (\cH^{N-1}_{\omega})^e$ by
\begin{subequations}
\begin{equation}\label{composition_ope}
\gP(a):= \sum_{n=0}^{m}\frac{1}{n!}\sum_{i_1,\cdots,i_n=1}^e\partial_{i_1}\cdots\partial_{i_n}\gp(\langle \1^*, a\rangle)[(a^+)_{i_1}\cdots (a^+)_{i_n}]_{<N}\,, 
\end{equation}
where $a^+= a- \langle \1^*, a\rangle$ is an element of $(\cH^{N-1}_{\omega})^e$, the product $\cdots$ is taken with respect to the intrinsic product $m$, $[\cdots]_{<N}$ is the projection on $\cH^{N-1}_{\omega}$ and we use the shorthand notation
\[\partial_{i_1}\cdots\partial_{i_n}\gp(x)= \frac{\partial^n}{\partial x_{i_1}\cdots\partial x_{i_n}}\gp(x)\,.\]
Since the product $m $ is commutative and the expression inside the second sum is symmetrical in the choice of $i_1$, $\cdots$ ,$i_n$, this definition is equivalent to the following
\begin{equation}\label{composition_ope2}
\gP(a)=\sum_{k}\frac{\partial^{k}\gp(\langle \1^*, a\rangle)}{k!}[(a^+)_{1}^{k_1}\cdots (a^+)_{e}^{k_e}]_{<N}\,,
\end{equation}
\end{subequations}
where the sum is done over all multi-indices $k\in \bN^e$, $k= (k_1, \cdots, k_e)$ and we adopt the notation
\[\partial^{k}\gp(x):=\partial_{1}^{k_1}\cdots \partial_{e}^{k_e}\gp (x)= \frac{\partial^{ k_{1}}}{\partial x_{1}^{k_1}}\cdots\frac{\partial^{ k_e}}{\partial x_{e}^{k_e}}\gp (x)\,, \quad k!=k_1!\cdots k_e!\, .\]
(See \ref{identity_sum} for the equivalence of the two notions). To formulate a rigorous result, we denote by $C^{k}_b(\bR^{e}, \bR)$ the set of functions $\gp\colon \bR^e\to \bR$ whose partial derivatives up to any multiindex of order $k$ are continuous and bounded.
\begin{prop}\label{composition}
Let $X$ be an  $\gga$-weighted rough path over $\cH$ with respect to some compatible weight. For any $Y\in (\cD^{N\gamma}_X)^e$ and  $ \gp\in C^{N}_b(\bR^{e}, \bR)$, $\gP(Y_t)$ belongs to $\cD^{N\gamma}_{X}$.
\end{prop}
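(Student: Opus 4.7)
The plan is to verify directly the controlled-rough-path estimate \eqref{contr_eq_Gamma} with $\gP(Y_\cdot)$ in place of $Y_\cdot$: for every $u\in\cB^{N-1}_{\omega}$ I need
\[
\abs{\langle u^{*},\gP(Y_t)-\gG_{ts}\gP(Y_s)\rangle}\lesssim \abs{t-s}^{(N-\omega(u))\gga}.
\]
The $\gga$-H\"older regularity of $t\mapsto\langle u^{*},\gP(Y_t)\rangle$ will follow by combining the above with the H\"older estimate \eqref{eq:genrpbound_in} on $X$.

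The first step is to exploit multiplicativity of $\gG_{ts}$. Since $X_{st}\in G^{N_{\gga}}_{\omega}$ is a truncated character, so is $X_{st}^{-1}=X_{st}\circ\cA$, hence $\gG_{ts}=(X_{st}^{-1}\otimes\text{id})\gD$ is a unital algebra homomorphism preserving $\cH^{N-1}_{\omega}$ (using compatibility of $\omega$ with $\gD$ and the extension of $\gG_{ts}$ mentioned in the remark after Definition \ref{defn_contr_geo}). Setting $\tilde Y_t:=\gG_{ts}Y_s$ componentwise and using $\gG_{ts}\1=\1$, one has $\gG_{ts}(Y_s^{+})=\tilde Y_t-y_s\1$, and applying $\gG_{ts}$ termwise to \eqref{composition_ope2} yields
\[
\gG_{ts}\gP(Y_s)=\sum_{k}\frac{\partial^{k}\gp(y_s)}{k!}\bigl[(\tilde Y_t-y_s\1)^{k}\bigr]_{<N}.
\]
This transfers every factor onto the common ``post-transported'' object $\tilde Y_t$.

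Next I would Taylor-expand each $\partial^{k}\gp(y_s)$ around $y_t$ to order $N-|k|-1$ and simultaneously expand $(\tilde Y_t-y_s\1)^{k}=\bigl((\tilde Y_t-y_t\1)+(y_t-y_s)\1\bigr)^{k}$ binomially. After collecting terms via the standard multinomial identity, this reorganises as
\[
\gG_{ts}\gP(Y_s)=\sum_{k}\frac{\partial^{k}\gp(y_t)}{k!}\bigl[(\tilde Y_t-y_t\1)^{k}\bigr]_{<N}+E_{st},
\]
where each summand of $E_{st}$ carries either a Taylor remainder of order $N-|k|$ in $\abs{y_s-y_t}$ or a factor $(y_t-y_s)^{|j|}$, together with a product of copies of $\tilde Y_t-y_t\1$. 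Pairing with $u^{*}$ and using $\gga$-H\"older continuity of $y$ together with \eqref{eq:genrpbound_in} then shows that every contribution to $E_{st}$ is $O(\abs{t-s}^{(N-\omega(u))\gga})$.

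Finally I would compare this principal part to $\gP(Y_t)$, i.e.\ estimate
\[
\sum_{k}\frac{\partial^{k}\gp(y_t)}{k!}\bigl[(Y_t-y_t\1)^{k}-(\tilde Y_t-y_t\1)^{k}\bigr]_{<N}.
\]
Writing $(Y_t-y_t\1)=(\tilde Y_t-y_t\1)+(Y_t-\tilde Y_t)$ and expanding each difference of powers telescopically, the controlled rough-path hypothesis provides $\abs{\langle v^{*},Y_t-\tilde Y_t\rangle}\lesssim\abs{t-s}^{(N-\omega(v))\gga}$ for every $v\in\cB^{N-1}_{\omega}$, while the residual factors are controlled by \eqref{eq:genrpbound_in}; combining these with the coproduct/product compatibility gives the required bound. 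The main obstacle I foresee is the interaction of the truncation $[\,\cdot\,]_{<N}$ with both the product in $\cH$ and with $\gG_{ts}$: two factors in $\cH^{N-1}_{\omega}$ may have a product outside $\cH^{N-1}_{\omega}$, and $\gG_{ts}$ does not \emph{a priori} commute with the complementary projection. The essential check is that, after pairing with $u^{*}$ of weight $\omega(u)<N$, every surviving term has total $\omega$-weight plus Taylor-remainder order at least $N$, so that it carries enough H\"older decay; this is exactly where the hypothesis $\gp\in C^{N}_{b}(\bR^{e},\bR)$ is used to dominate all remainders uniformly.
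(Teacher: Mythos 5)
Your proposal is correct and follows essentially the same route as the paper's proof: apply $\gG_{ts}$ to the lift using its (truncated) multiplicativity, Taylor-expand the derivatives of $\gp$ between $y_s$ and $y_t$, recombine via the multinomial identity, and absorb everything else into remainders controlled by the H\"older bounds on $X$ and the controlled-path hypothesis on $Y$. The only cosmetic difference is that you invoke the bound on $Y_t-\gG_{ts}Y_s$ at the very end, whereas the paper substitutes it immediately after applying $\gG_{ts}$; your explicit flagging of the interaction between the truncation $[\,\cdot\,]_{<N}$ and the product is a point the paper treats implicitly, and it is resolved exactly as you say, since terms of weight at least $N$ pair to zero against $u^*$ with $\omega(u)<N$.
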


\begin{proof}
The proof of this result was already given in \cite[Lem. 8.4]{gub10} in a specific case and it is also a direct consequence of \cite[Thm 4.16]{Hairer2014}. We repeat it here for sake of completeness. Following the identity \eqref{contr_eq_Gamma} the goal is to prove the estimate
\[\sup_{s\neq t\in [0,T]^2}\frac{\abs{\langle u^*,\gP(Y_t) - \gG_{ts} \gP(Y_s)\rangle}}{|t-s|^{(N-\omega(u))\gga}}<+\infty\,\]
for any $u \in \cB^{N-1}_{\omega}$.  Writing $Y_s= \widetilde{Y}_s+ Y_s^0\1$ for some fixed function $\widetilde{Y}\colon [0,T]\to( (\cH^{N-1}_{\omega}\cap \text{Ker}(\1^*))^e$, $\widetilde{Y}=(\widetilde{Y}^1,\cdots, \widetilde{Y}^e)$ and $Y^0\colon [0,T]\to \bR^e$, we use the multi-index notation \eqref{composition_ope2} to have the identity
\[\gP(Y_s)= \sum_{\abs{k}<  N}\frac{\partial^{k}\gp(Y^0_s)}{k!}((\widetilde{Y}_s)^{k})_{<N}\,,\]
where $\abs{k}=\sum_{i=1}^ek_i$. Applying the linear maps $\gG_{ts} $ to $\gP(Y_s)$, we use the intrinsic properties of the operators $\gG$ to deduce the existence of  a function $R\colon [0,T]^2\to(\cH^{N-1}_{\omega})^e$, $R=(R^1,\cdots, R^e) $ such that for any $i=1\,, \cdots \,, e$  and $u\in \cB^{N-1}_{\omega}$ one has
\begin{equation}\label{proof_composition1}
\gG_{ts}\gP(Y_s)= \sum_{\abs{k}<  N}\frac{\partial^{k}\gp(Y^0_s)}{k!}(\gG_{ts}\widetilde{Y}_s)^{k}+ R_{st}\,,\quad \sup_{s\neq t}\frac{\abs{\langle u^*,R^i_{st}\rangle}}{|t-s|^{(N-\omega(u)) \gamma}} <+\infty\,,
\end{equation}
By hypothesis on $Y$ and using the trivial identity $\gG_{ts}\1=\1$ there exists another function $\tilde{R}\colon [0,T]^2\to(\cH^{N-1}_{\omega})^e$, $\tilde{R}=(\tilde{R}^1,\cdots, \tilde{R}^e) $ such that one has similarly
\begin{equation}\label{proof_composition2}\gG_{ts}\widetilde{Y}_s= \gG_{ts}Y_s- Y^0_s\1=\widetilde{Y}_t + (Y^0_t- Y^0_s)\1 + \tilde{R}_{st}\,,
\end{equation}
where $\tilde{R}$ satisfies the same property of $R$. Combining the identities \eqref{proof_composition1} and \eqref{proof_composition2} there exists another function  $R'\colon [0,T]^2\to(\cH^{N-1}_{\omega})^e$ similar to $R$ and $\tilde{R}$ such that
\begin{equation}\label{proof_composition3}
\gG_{ts}\gP(Y_s)=  \sum_{\abs{k}<  N}\frac{\partial^{k}\gp(Y^0_s)}{k!}(\widetilde{Y}_t + (Y^0_t- Y^0_s)\1)^{ k}+R'_{st}\,.
\end{equation}
Performing a Taylor expansion between $Y^0_s$ and $Y^0_t$, we have for any multi-index $k$
\[
\partial^{k}\gp(Y^0_s)= \sum_{\abs{k+l}< N} \frac{\partial^{k+l}\gp(Y^0_t)}{l!}(Y^0_s-Y^0_t)^{ l} + O(\abs{t-s}^{(N-\abs{k})\gga})\,,
\]
as a consequence of the $\gga$-H\"older regularity of $Y$. Using the same bound we obtain also for any $u\in \cB^{N-1}_{\omega}$ that there exists a constant $C_{u}>0$ such that
\begin{equation}\label{proof_composition4}
|\langle u^*, (\widetilde{Y}_t + (Y^0_t- Y_s^0)\1)^{k}\rangle|\leq C_{u}\abs{t-s}^{(\abs{k}- \omega(u))\gga}\,.
\end{equation}
Plugging the bound \eqref{proof_composition4} and the Taylor formula into \eqref{proof_composition3} it is easy to show for any $u\in \cB^{N-1}_{\omega}$ that there exists a constant $C'_{u}>0$ such that
\[|\langle u^* , \gG_{ts}\gP(Y_s)- \sum_{\abs{k+l}< N }C_u\frac{\partial^{k+l}\gp(Y^0_t)}{k!\,l!}(Y^0_s-Y^0_t)^{ l}(\widetilde{Y}_t + (Y^0_t- Y^0_s)\1)^{ k}\rangle\leq C'_u \abs{t-s}^{(N-\omega(u))\gga}\,.\]
From this bound, it is sufficient to apply for any multi index $m$ the multinomial identity
\[\sum_{k+l=m} \frac{1}{k!\,l!}(Y^0_s-Y^0_t)^{ l}(\widetilde{Y}_t + (Y^0_t- Y^0_s)\1)^{ k}= \frac{\partial^m\gp(Y^0_t)}{m!}(\widetilde{Y}_t )^{ m}\,,\]
thereby obtaining the thesis.
\end{proof}
We pass to the notion of rough integration in this abstract context. To define it, we introduce firstly an algebraic notion of integration map compatible with our structure.
\begin{defn}\label{defn_compatible weight}
Let   $\omega$ be a compatible weight for $\cH$ and $n\geq 1$ an integer. A linear map $\cI\colon \cH \to \cH$  is said an integration map of order $n$ for $\omega$ if it satisfies the following properties:
\begin{itemize}
\item for every   $u \in \cB$ we have $\cI (u)\in \cB$ and $\omega(\cI (u))=  \omega(u)+n$, 
\item  for every $\tau\in \cH$ one has the identity
\begin{equation}\label{algebraic_identity}
\gD\cI (\tau)= \cI(\tau) \otimes \1+ (id \otimes \cI)\Delta \tau\,. 
\end{equation}
\end{itemize}
\end{defn}
\begin{rk}
The notion of integration map developed here is a specific case of abstract integration map of order $n$, as developed in the much more general framework of regularity structure, see \cite{Hairer2014}. Accordingly to \cite{Connes1998}, the property \eqref{algebraic_identity} can be rephrased in terms of the Hochschild cohomology of $\cH$. 
\end{rk}
Given this definition, we can state the general operation of rough integration at the level of weighted controlled rough paths
\begin{prop}\label{rough_integration}  
Let $X $ be an  $\gga$-weighted rough path over $\cH$ with respect to some compatible weight $\omega$ and let $\cI$ be an abstract integration map of order $n\leq N_{\gamma}$. Then for every $Y\in \cD^{(N_{\gamma}-n+1)\gamma}_{X}$ there exists a unique ${n\gamma}$-H\"older path  $I \colon [0,T]\to \bR$ given by 
\begin{equation}\label{eqn:rintdef}
t\mapsto I_t :=\lim_{|\pi|\to 0}\sum_{[s,u]\in\pi}\sum_{v\in\cB^{N-n}_{\omega}}\langle v^*,Y_s\rangle \langle X_{su}, \cI(v)\rangle\,,
\end{equation}
where $\pi$ is a sequence of partitions of $[0,t]$ whose mesh $|\pi|$ converges to $0$. We call it \emph{the rough integral of $Y$ with respect to $\cI$}. Moreover one has the estimate
\begin{equation}\label{equ:rough_int_est}
  I_t- I_s=\sum_{v\in\cB^{N-n}_{\omega}}\langle u^*,Y_s\rangle \langle X_{st}, \cI(u)\rangle + O(\abs{t-s}^{N_\gamma+1})
\end{equation}
for any $s<t$. Introducing the function $\cK (Y)_t:= I_t \1+ \cI(Y_t)$
one has  $ \cK (Y)\in \cD^{(N_{\gamma}+1)\gamma}_{X}$.
\end{prop}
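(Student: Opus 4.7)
The natural approach is to recognise this as an instance of Gubinelli's sewing lemma applied to the germ
\[
\Xi_{st} := \sum_{v\in \cB^{N_\gamma-n}_\omega} \langle v^*,Y_s\rangle \langle X_{st},\cI(v)\rangle,
\]
which is precisely the quantity appearing inside the Riemann-type sum in \eqref{eqn:rintdef}. Thus I would split the proof into (i) a bound on the defect $\delta\Xi_{sut}:=\Xi_{st}-\Xi_{su}-\Xi_{ut}$ with Hölder exponent strictly greater than one, (ii) invoking the sewing lemma, and (iii) verifying that $\cK(Y)$ is a weighted controlled rough path.

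For step (i), the key algebraic input is the integration-map identity \eqref{algebraic_identity} combined with Chen's relation \eqref{eq:chen}. Writing $\langle X_{st},\cI(v)\rangle = \langle X_{su}\otimes X_{ut},\gD\cI(v)\rangle$ and using $\gD\cI(v)=\cI(v)\otimes\1+(\text{id}\otimes\cI)\gD v$, together with the fact that $X_{ut}(\1)=1$, one obtains
\[
\langle X_{st},\cI(v)\rangle-\langle X_{su},\cI(v)\rangle = \langle X_{su}\otimes X_{ut},(\text{id}\otimes\cI)\gD v\rangle = \langle X_{ut},\cI(\gG_{us}v)\rangle,
\]
where the last equality uses $(X_{su}\otimes\text{id})\gD=\gG_{us}$ and linearity of $\cI$. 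Summing against $\langle v^*,Y_s\rangle$ and subtracting $\Xi_{ut}$ gives the compact expression $\delta\Xi_{sut}=\langle X_{ut},\cI(\gG_{us}Y_s-Y_u)\rangle$. Expanding in the basis $\cB^{N_\gamma-n}_\omega$, the controlled rough path bound \eqref{contr_eq_Gamma} yields $|\langle w^*,\gG_{us}Y_s-Y_u\rangle|\lesssim |u-s|^{(N_\gamma-n+1-\omega(w))\gga}$ and the weighted Hölder bound \eqref{eq:genrpbound_in} together with $\omega(\cI(w))=\omega(w)+n$ yields $|\langle X_{ut},\cI(w)\rangle|\lesssim|t-u|^{(\omega(w)+n)\gga}$, so $|\delta\Xi_{sut}|\lesssim|t-s|^{(N_\gamma+1)\gga}$, where $(N_\gamma+1)\gga>1$ by definition of $N_\gamma$. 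The crude bound $|\Xi_{st}|\lesssim|t-s|^{n\gga}$ is immediate from the term $v=\1$.

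The sewing lemma then produces a unique $I\colon[0,T]\to\bR$ with $I_0=0$ for which the limit \eqref{eqn:rintdef} holds and \eqref{equ:rough_int_est} is satisfied, and the germ bound yields the claimed $n\gga$-Hölder regularity. For step (iii), one first checks the identity
\[
\gG_{ts}\cI(h)=\langle X_{st},\cI(h)\rangle\,\1+\cI(\gG_{ts}h),\qquad h\in\cH,
\]
again using \eqref{algebraic_identity}, and then tests $\cK(Y)_t-\gG_{ts}\cK(Y)_s$ against $u^*$ for $u\in\cB^{N_\gamma}_\omega$ split into three cases: for $u=\1$ one recovers $I_t-I_s-\Xi_{st}=O(|t-s|^{(N_\gamma+1)\gga})$ from the sewing estimate; for $u=\cI(v)$ with $v\in\cB^{N_\gamma-n}_\omega$ one gets $\langle v^*,Y_t-\gG_{ts}Y_s\rangle$, controlled by $|t-s|^{(N_\gamma-n+1-\omega(v))\gga}=|t-s|^{(N_\gamma+1-\omega(u))\gga}$; and for remaining basis elements $u$ outside $\cI(\cB)\cup\{\1\}$ the contribution vanishes identically since $\cI$ maps $\cB$ into $\cB$. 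The main obstacle is purely bookkeeping: tracking the Sweedler calculus around $\gD\cI$ and correctly identifying the three regimes of $u$, while every analytical estimate is supplied directly by the weighted Hölder bound on $X$ and the defining controlled-path estimate for $Y$.
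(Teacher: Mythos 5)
Your proposal is correct and follows essentially the same route as the paper: the same germ is fed into the Sewing Lemma, and the defect is controlled by exactly the same combination of Chen's identity, the coproduct identity \eqref{algebraic_identity} for $\cI$, the weighted H\"older bound on $X$, and the controlled-path estimate for $Y$, yielding the exponent $(N_\gamma+1)\gga>1$. Your packaging of the algebra via the operators $\gG$ rather than Sweedler notation is only cosmetic, and your case analysis for $\cK(Y)$ fills in a step the paper dismisses as trivial.
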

This result extends \cite[Thm 8.5]{gub10} into a wider algebraic context and its proof is a standard application of the well-known \emph{Sewing Lemma} (see \cite{Gubinelli2004}), we will recall it as formulated in \cite[Prop. 2.1]{Manchon2018}, referring the reader to the same citation for the proof.
\begin{lem}[Sewing Lemma]\label{sewing_lemma}
Let $\cZ\colon[0,T]^2\to \bR$ be a continuous function and $\varepsilon>0$. Suppose that there exists a collection of real numbers $a_i,b_i\geq 0$ for $i=1,\ldots,n$ and $\beta\in (0,1)$ such that $a_i+b_i=1+\varepsilon$ and
\begin{equation}\label{hyp_sewing}
	\abs{\cZ_{st}-\cZ_{su} - \cZ_{ut}}\leq \sum_{i=1}^nC_i\vert t-u\vert^{a_i}\vert u-s\vert^{b_i}\,,\quad \abs{\cZ_{st}}\leq C_0 \abs{t-s}^{\beta}
\end{equation}
for some positive constants $\{C_i\}_{i=1,\cdots n}\geq 0$ and $C_0\geq 0$ and uniformly on $s,t,u\in[0,T]$ with $s\leq u\leq t$ or $t\leq u\leq s$. Then there exists a unique $\beta$-H\"older function $Z\colon [0,T]\to \bR$ such that $Z_0=0$ given by 
\begin{equation}\label{definition_integral}
t\mapsto Z_t= \lim_{\abs{\pi}\to 0}\sum_{[s,u]\in\pi}\cZ_{su}\,,
\end{equation}
where $\pi$ is a generic sequence of partitions of $[0, t]$ whose mesh $\abs{\pi}$ converges to $0$. Moreover there exists a constant $C>0$ such that one has the estimate
\[
\abs{Z_t-Z_s- \cZ_{st}}\leq C\abs{t-s}^{1+\gep}\,.
\]
\end{lem}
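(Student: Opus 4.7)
The plan is to construct $Z$ as the limit of Riemann-type sums
\[ \cZ^{\pi}_{st}\,:=\,\sum_{[u,v]\in \pi}\cZ_{uv} \]
over partitions $\pi$ of $[s,t]$, and the entire argument will rest on the following uniform estimate: for every partition $\pi$ of $[s,t]$,
\[ |\cZ^{\pi}_{st}-\cZ_{st}|\,\leq\, K|t-s|^{1+\gep}, \]
with a constant $K$ depending only on $\gep$ and the $C_i$'s. Once it is established, the bound $|Z_t-Z_s-\cZ_{st}|\leq C|t-s|^{1+\gep}$ will follow by passing to the limit over partitions of $[s,t]$, and combined with $|\cZ_{st}|\leq C_0|t-s|^{\beta}$ it also yields the $\beta$-H\"older regularity of $Z$.

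To prove the uniform bound I would use a greedy point-deletion argument. Writing $\delta\cZ_{sut}:=\cZ_{st}-\cZ_{su}-\cZ_{ut}$, removing an interior node $t_j$ from $\pi=\{s=t_0<\cdots<t_k=t\}$ changes $\cZ^{\pi}_{st}$ by exactly $\delta\cZ_{t_{j-1},t_j,t_{j+1}}$, which by the hypothesis \eqref{hyp_sewing} is bounded by $\sum_i C_i(t_{j+1}-t_{j-1})^{1+\gep}$ (using $a_i+b_i=1+\gep$ with $a_i,b_i\geq 0$). The elementary pigeonhole inequality $\sum_{j=1}^{k-1}(t_{j+1}-t_{j-1})\leq 2(t-s)$ allows me to pick at each stage a $j$ with $t_{j+1}-t_{j-1}\leq 2(t-s)/(k-1)$. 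Iterating from $k$ intervals down to a single interval produces the telescoping bound
\[ |\cZ^{\pi}_{st}-\cZ_{st}|\,\leq\, \sum_{m=2}^{k}\sum_{i} C_i\Big(\tfrac{2(t-s)}{m-1}\Big)^{1+\gep}\,\leq\, K|t-s|^{1+\gep}, \]
since the series $\sum_{m\geq 1} m^{-(1+\gep)}$ converges.

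Existence of the limit then follows by a Cauchy argument: for any two partitions $\pi_1,\pi_2$ of $[0,t]$ with mesh at most $\delta$, applying the uniform bound to $\pi_1\vee\pi_2$ restricted to each subinterval of $\pi_1$ gives
\[ |\cZ^{\pi_1}_{0t}-\cZ^{\pi_1\vee \pi_2}_{0t}|\,\leq\, \sum_{[u,v]\in \pi_1}K|v-u|^{1+\gep}\,\leq\, K\delta^{\gep}\,t, \]
and symmetrically for $\pi_2$, so $Z_t:=\lim_{|\pi|\to 0}\cZ^{\pi}_{0t}$ is well-defined. For uniqueness, if $\widetilde{Z}$ is a second candidate with $\widetilde{Z}_0=0$, the increment difference $D_{st}:=(Z_t-Z_s)-(\widetilde{Z}_t-\widetilde{Z}_s)$ is additive and satisfies $|D_{st}|\leq 2C|t-s|^{1+\gep}$; splitting $[s,t]$ into $k$ equal pieces and summing yields $|D_{st}|\leq 2Ck^{-\gep}|t-s|^{1+\gep}\to 0$, forcing $D\equiv 0$ and hence $Z\equiv \widetilde{Z}$ once the initial condition is invoked.

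The main obstacle is the combinatorial pigeonhole estimate in the first step, which is the genuine content of the Sewing Lemma; the rest is essentially routine. A minor bookkeeping caveat is that the hypothesis \eqref{hyp_sewing} is stated only for ordered triples $s\leq u\leq t$, so one must systematically work with increasing partitions throughout (the decreasing case being symmetric).
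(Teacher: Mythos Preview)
Your argument is correct and follows the classical greedy point-deletion strategy for the Sewing Lemma (essentially the proof going back to Young and Gubinelli). The paper itself does not prove this lemma: it merely recalls the statement from \cite[Prop.~2.1]{Manchon2018} and refers the reader there, so there is nothing to compare against beyond noting that your approach is the standard one that the cited reference also uses.
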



\begin{proof}[Proof of Proposition \ref{rough_integration}]
The result is obtained by checking that the function
\[\cZ_{st}:=\sum_{v\in\cB^{N-n}_{\omega}}\langle v^*,Y_s\rangle \langle X_{st}, \cI(v)\rangle\]
satisfies the hypotheses in the Sewing lemma. For any fixed $s\leq u\leq t$ one has by definition.
\[
\begin{split}
\cZ_{st}- \cZ_{su}- \cZ_{ut}&=\sum_{v\in\cB^{N-n}_{\omega}}\langle v^*,Y_s\rangle \bigg( \langle X_{st}, \cI(v)\rangle-  \langle X_{su}, \cI(v)\rangle\bigg)  -\langle v^*, Y_{u} \rangle \langle X_{ut}, \cI(v)\rangle\,.
\end{split}
\]
Let us fix a word $v\in\cB^{N-n}_{\omega}$. Using  Chen's property \eqref{eq:chen} and the definition of convolution product $*$ we have
\[\begin{split}
\langle X_{st}, \cI(v)\rangle- \langle X_{su}, \cI(v) \rangle&=\langle X_{su}* X_{ut}, \cI(v)\rangle- \langle X_{su},\cI(v)\rangle\\&=\langle X_{su}\otimes X_{ut}, \Delta (\cI(v))\rangle- \langle X_{su},\cI(v)\rangle\\&=\langle X_{su} \otimes X_{ut}, \cI(v)\otimes \1+(id \otimes \cI)\Delta v) \rangle- \langle X_{su},\cI(v)\rangle\\&=\langle X_{su},v^{(1)}\rangle \langle X_{ut}, \cI(v^{(2)})\rangle\,,
\end{split}\]
where we used the notation $\Delta v=  \sum v^{(1)}\otimes v^{(2)}$. Therefore we can write
\begin{equation}\label{new_equation}
\cZ_{st}- \cZ_{su}- \cZ_{ut}= \sum_{v\in\cB^{N-n}_{\omega}}\langle v^*, Y_s\rangle\langle X_{su},v^{(1)}\rangle \langle X_{ut}, \cI(v^{(2)})\rangle-\langle v^*,Y_{u} \rangle \langle X_{ut}, \cI(v)\rangle\,.
\end{equation}
In order to understand the right hand side of \eqref{new_equation} we split the sum in two terms. Writing on the same dual basis the first term becomes
\[
\begin{split}
\sum_{v\in\cB^{N-n}_{\omega}}\langle v^*,Y_s\rangle  \langle X_{su},v^{(1)}\rangle \langle X_{ut}, \cI(v^{(2)})\rangle&= \sum_{v,k\in\cB^{N-n}_{\omega}}\langle v^*,Y_s\rangle\langle X_{su},v^{(1)}\rangle\langle k^*, v^{(2)}\rangle\langle X_{ut}, \cI(k)\rangle\\&=\sum_{v,k\in\cB^{N-n}_{\omega}}\langle v^*,Y_s\rangle\langle X_{su}\otimes k^*, \Delta v\rangle\langle X_{ut}, \cI(k)\rangle\\&=\sum_{k\in\cB^{N-n}_{\omega}}\langle X_{su}* k^*, Y_s\rangle\langle X_{ut}, \cI(k)\rangle\,.
\end{split}
\]
Since the index in the summation is mute, we can finally apply the bound and the H\"older property of a rough path in to obtain the existence of a sequence  of positive constants $\{C_{v}\}_{v\in\cB^{N-n}_{\omega}}$ such that
\[\begin{split}
\abs{\cZ_{st}- \cZ_{su}- \cZ_{ut}}&\leq \sum_{v\in\cB^{N-n}_{\omega}}\abs{\big(\langle X_{st}* v^*, Y_s\rangle-\langle v^*, Y_u\rangle\big)\langle X_{ut}, \cI(v)\rangle}\\&\leq \sum_{v\in\cB^{N-n}_{\omega}} C_{w}\abs{u-s}^{(N_{\gamma}+1-n- \omega(v) )\gamma} \abs{t-u}^{(\omega(v)+n)\gga}\,.
\end{split}\]
By construction one has $(N_{\gamma}+  1) \gamma >1$ and  we can apply the sewing lemma to obtain the function $I_t$. The second part of the statement comes trivially from Definition \ref{defn_contr_geo}.
\end{proof}
\begin{rk}\label{rk_independence_product}
Looking at the proof of this result,  the definition of the rough integral depends only on the coproduct structure and the weight $\omega$. Thus the construction holds independently of two product structure we considered.
\end{rk}
\subsection{Examples of rough paths}
Depending on the choice of the underlying Hopf algebra $\cH$ and integration maps $\cI$, we recover some well-known definitions in the literature of rough paths.

\subsubsection*{Geometric rough paths} 
For any real vector space $V$ with finite dimension, we consider its tensor algebra $T(V)$ given by
\[T(V)= \bigoplus_{k=0}^{+\infty}V^{\otimes k}\,,\]
where by convention we set $V^{\otimes 0}= \bR$. To simplify the notation of pure tensors, we fix a basis of $V$ and we identify it with $\bR^{A}$, the free vector space generated from the finite set $A=\{1\,, \cdots\,, \dim (V)\}$. We will equivalently use the word \emph{alphabet} to denote a finite set. Extending this identification at the level of the tensor algebra, $T(V)$ is isomorphic to $T(A)$, the free vector space generated from the set of \emph{words} built from $A$ union the empty word $\1$, we denote it with $W(A)$.

Two operations can be naturally defined on the tensor algebra: the shuffle product $\shuffle$, defined recursively from the identities $\1 \,\shuffle\, v= v\, \shuffle\,\1 =v$ for any $v\in T(A) $ and for any couple of words $v,w\in W(A)$ and letters $a,b\in A$
\begin{equation}\label{recursive_shuffle}
va \shuffle wb= (v \shuffle wb)a + (va \shuffle w)b \,,
\end{equation}
where $va$ and $wb$ is the juxtaposition of the letters $a,b$ with the  words $v,w$. On the other hand, we introduce the  deconcatenation coproduct $\nabla$, defined by the identity $\nabla \1= \1 \otimes \1\\ $
and the relation
\begin{equation}\label{def_deconcatenation}
\nabla w=\1\otimes w +w\otimes \1+ \sum_{i=1}^{m-1} a_1\cdots a_i \otimes a_{i+1}\cdots a_m \,,
\end{equation}
for any non-empty word $w=a_1\cdots a_m\in W(A)$. It is a classical result in the algebraic literature (see e.g. \cite{reutenauer1993free}) that the triple $(T(A), \shuffle, \nabla)$ is a graded commutative bialgebra with unity $\1$ and the natural grading is given by the couple $(|\cdot|, W(A))$ where $|\cdot|$ is the word \emph{length}. Following the definition  of $\nabla$,  the maps $(\cI_i)_{i\in A}\colon T(A)\to T(A)$ defined by $\cI_i(w)=wi$ on $W(A)$ are integrations map of order $1$ for $|\cdot|$ for any $i \in A$.

Applying Definition \ref{dfn:genRP} and Proposition \ref{rough_integration}, we obtain the notion geometric rough paths and rough integration as defined in \cite{hairer2015geometric, bellingeri2020transport}.
\begin{defn}\label{defn_geom_rough}
Let $\gamma\in (0,1)$ and $A$ being an alphabet. We call every  $\gamma$-rough path over $T(A)$ a $\gamma$-\emph{geometric rough path}. For any $\gamma$-geometric rough path $X$, $i \in A$ and any $Y\in \cD^{N_{\gamma}\gamma}_{X}$ we call the rough integral of $Y$ with respect to $\cI_i$ the \emph{geometric rough integral} of $Y$ with respect to $X^i$, which is given by the identity
\begin{equation}\label{defn_rough_geom_int}
\int_{0}^t Y_r d^gX^i_r:= \lim_{|\pi|\to 0}\sum_{[s,u]\in\pi}\,\sum_{\substack{w\in W(A)\\|w|\leq N_{\gamma}-1}}\langle w^*,Y_s\rangle \langle X_{su}, wi \rangle\,.
\end{equation}
In case $\omega$ is  a compatible weight for $T(A)$, we call every  $\gga$-weighted rough path over $T(A)$ with respect to some compatible weight a \emph{$\gamma$-weighted geometric rough path}.
\end{defn}
\subsubsection*{Branched rough paths}
For any given  alphabet $A$, an $A$-labelled rooted tree is a non empty rooted combinatorial tree $\tau$ whose nodes are labelled by the elements of $A$. The set of $A$-labelled trees is denoted by $\cT(A)$. A finite disjoint union of $A$-labelled rooted trees or the empty graph $\1$ (just the same notation of the empty word) is called an $A$-labelled \emph{forest} and the set of all $A$-labelled forest is denoted by  $\cF(A)$. For any finite family of  trees $\t_1\,,\cdots \,,\t_n$ the forest obtained by the disjoint union of the previous trees is denoted by $f=\t_1\cdots \t_n$ independently of the order of the trees. We can graphically represent elements of $A$ by simply  putting the root at the bottom and decorating each node with the corresponding label. 
For instance if $A=\{1\,,2\}$ one has
\[   \begin{tikz}[dtree]
    \node[dtree black node, label={ right: \tiny 1}]{};
  \end{tikz}, \begin{tikz}[dtree]
  \node[dtree black node,  label={ right: \tiny 1}] {}
  child { node[dtree black node,label={ right: \tiny 2}] {} }  ;
\end{tikz}, \begin{tikzpicture}[dtree]
    \node[dtree black node,label={ right: \tiny 2}] {} 
    child { node[dtree black node,label={left: \tiny 1}]{} }
    child { node[dtree black node,label={ right: \tiny 1}]{} }
    ;
  \end{tikzpicture}\in \cT(A)\,;\qquad  \begin{tikzpicture}[dtree]
    \node[dtree black node,label={ right: \tiny 1}] {} 
    child { node[dtree black node,label={left: \tiny 2}]{} }
    child { node[dtree black node,label={ right: \tiny 2}]{} }
    ;
  \end{tikzpicture}\begin{tikz}[dtree]
  \node[dtree black node,  label={ right: \tiny 1}] {}
  child { node[dtree black node,label={ right: \tiny 2}] {} }  ;
\end{tikz} \in \cF(A)\,.\] 
Starting from the empty graph $\1$ we can recursively define $\cT(A)$ by means of the \emph{grafting} maps $(B^+_i)_{i\in A} \colon \cF(A)\to \cT(A)$. That is for any $i\in A$  we set $ B^+_i(\1):=\bullet_i$ and for any forest $\t_1\cdots \t_n$ the tree  $B^+_i(\t_1\cdots \t_m)$ is graphically given by
\[B^+_i(\t_1\cdots \t_m)=  \begin{tikzpicture}[dtree]
    \node[dtree black node,  label={ right: $ \scriptstyle i$}] {}
    child { node[dtree black node, label={ above: \tiny $\t_{1}$}]{} }
    child { node[dtree black node]{} }
    child { node[dtree black node, label={[label distance=0.03cm]90: $\cdots$ }]{} }
    child { node[dtree black node]{} }
    child { node[dtree black node, label={ above: \tiny $\t_{m}$}]{} }
    ;
  \end{tikzpicture}\;.\]
(since we consider combinatorial graphs, all the graphical representations of $B^+_i(\t_1\cdots \t_m)$ are identified to a single tree). We denote by $\cH(A)$  the free vector space generated from $\cF(A)$. This vector space has also a natural bialgebra structure.
The product coincides with the linear extension of the disjoint union of graphs and we denote it as a simple juxtaposition. On the other hand, the coproduct  $\gD$ can be uniquely defined by the base condition $ \gD\1= \1\otimes\1$ and  the recursive identities
\begin{equation}\label{defn_BCK_coproduct}
\gD f:=\gD\t_1\cdots \gD\t_n\,, \quad \gD(B^+_i(\gs)):=  B^+_i(\gs)\otimes\1 + (\text{id}\otimes B^+_i)\gD\gs\,,
\end{equation}
for any forest $f=\t_1\cdots \t_n$, $\gs\in \cF(A)$ and $i \in A$. It comes easily from the respective definitions that the resulting triple $(\cH(A), \cdot, \gD)$ is still a graded commutative bialgebra with unity $\1$ and the natural grading is given by the couple $(|\cdot|, \cF(A))$ where $|\cdot|$ is the forest \emph{cardinality}. The grafting maps $B^+_i$ are integration maps of order $1$ for $|\cdot|$.

The resulting Hopf algebra is known in the literature as the \emph{Butcher-Connes-Kreimer Hopf Algebra} (see \cite{butcher72,Connes1998}) and it has been used to introduce the class of branched rough path (see e.g. \cite{hairer2015geometric,gub10}). From Definition \ref{dfn:genRP} and Proposition \ref{rough_integration} we obtain the following definition.
\begin{defn}\label{defn_branched}
Let $\gamma\in (0,1)$ and $A$ being an alphabet. We call every  $\gamma$-rough path over $\cH(A)$ a $\gamma$-\emph{branched rough path}. For any $\gamma$-branched rough path $X$, $i \in A$ and any $Y\in \cD^{N_{\gamma}\gamma}_{X}$  we call the rough integral of $Y$ with respect to $B^+_i$ the \emph{branched rough integral} of $Y$ with respect to $X^i$, which is given by the identity
\begin{equation}\label{defn_rough_int_br}
\int_{0}^t Y_r d^bX^i_r= \lim_{|\pi|\to 0}\sum_{[s,u]\in\pi}\,\sum_{\substack{\tau\in \cF(A)\\|\tau|\leq  N_{\gamma}-1}}\langle \tau^*,Y_s\rangle \langle X_{su},B_i^+(\tau) \rangle\,.
\end{equation}
In case $\omega$ is  a compatible weight for $\cH(A)$, we call every  $\gga$-weighted rough path over $\cH(A)$ with respect to some compatible weight a \emph{$\gamma$-weighted branched rough path}.
\end{defn}

\subsubsection*{Random rough path}
All definitions in rough path theory are deterministic but the most interesting examples are obtained when we consider stochastic biprocesses $X\colon[0,T]^2\to (\cH^{N}_{\omega})^*$, depending on a complete probability space $(\gO, \cF, \bP)$.  We present  the generalisation of a standard criterion to prove that a biprocess  has a.s. the H\"older property \eqref{eq:genrpbound_in}. The whole result is based on  this deterministic inequality. For its proof we refer to 
\cite[Cor. 4]{Gubinelli2004}.
\begin{lem}\label{gub_Lemma}
Let $\alpha>0$ and $p\geq 1$. For any measurable function $Z\colon [0,T]^2\to \bR$ there exists a constant  $C(\alpha, p)>0$ depending on $\alpha$ and $p$ such that
\[||Z||_{\alpha}:=\sup_{s\neq t\in [0,T]^2}\frac{\abs{Z_{st}}}{|t-s|^{\alpha}}\leq C(\alpha, p)\left(\left[\int_0^T\int_0^T\frac{|Z_{st}|^{2p} }{|t-s|^{2\alpha p+4}}dsdt\right]^{\frac{1}{2p}} +|||\delta Z |||_{\alpha}\right)\,,\]
where the constant $|||\delta Z |||_{\alpha}$ is given by 
\[|||\delta Z |||_{\alpha} =\inf\left\{ \sum_{i=1}^n\sup_{s\leq u  \leq  t\in [0,T]^3} \frac{|Z^i_{st}- Z^i_{su}- Z^i_{ut}|}{|u-s|^{\rho_i}|t-u|^{\alpha- \rho_i}}\colon Z=  \sum_{i=1}^nZ_i\right\}\]
and the last infimum is taken over all sequences  such that $Z=  \sum_{i=1}^nZ_i$ and for all choices of the numbers $\rho_i\in (0,\alpha)$.
\end{lem}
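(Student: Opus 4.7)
The plan is to prove this estimate in three stages, building on the classical Garsia–Rodemich–Rumsey (GRR) inequality and a telescoping construction that accounts for the non-additivity measured by the increment operator $\delta Z_{sut} := Z_{st} - Z_{su} - Z_{ut}$.

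First, I would reduce to the case of a single function. Given a decomposition $Z = \sum_{i=1}^n Z_i$ realising (or nearly realising) the infimum defining $|||\delta Z|||_\alpha$, the triangle inequality $\|Z\|_\alpha \leq \sum_i \|Z_i\|_\alpha$ reduces the statement to: for a single measurable $W\colon [0,T]^2 \to \bR$ and some $\rho \in (0,\alpha)$,
\[
\|W\|_\alpha \;\leq\; C(\alpha,p)\Bigl(\bigl[\textstyle\int_0^T\!\!\int_0^T \tfrac{|W_{st}|^{2p}}{|t-s|^{2\alpha p + 4}}\,ds\,dt\bigr]^{1/(2p)} + \sup_{s \leq u \leq t} \tfrac{|\delta W_{sut}|}{|u-s|^\rho |t-u|^{\alpha-\rho}}\Bigr).
\]
Passing $\sum_i$ through both terms on the right recovers the stated bound.

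Second, I would apply a two-parameter GRR argument with $\Psi(x) = x^{2p}$ and $p(u) = u^{\alpha + 2/p}$. The classical one-parameter GRR lemma says that if $Z_{st} = f(t) - f(s)$ were an exact increment, then finiteness of the double integral $B := \int\!\!\int |Z_{st}|^{2p}/|t-s|^{2\alpha p + 4}\,ds\,dt$ would force $|f(t)-f(s)| \leq c_{\alpha,p} B^{1/(2p)}|t-s|^\alpha$ by Chebyshev plus the GRR bound for continuous moduli of continuity. Since $W$ is not an exact increment, I would pick, for each pair $(s,t)$, a ``good'' intermediate point $u \in (s,t)$ where the integral mean of $|W|^{2p}$ is small (a density-point argument using Fubini) and write $W_{st} = W_{su} + W_{ut} + \delta W_{sut}$. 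Iterating this splitting on dyadic midpoints produces the telescoping expansion $W_{st} = \sum_{k} (\text{good increments}) + \sum_{k} \delta W$, where the good increments are controlled pointwise via GRR and contribute the integral term, while the defect contributions at scale $2^{-k}|t-s|$ are bounded by $|||\delta W|||_\alpha \cdot (2^{-k}|t-s|)^\rho \cdot (2^{-k}|t-s|)^{\alpha-\rho}$, giving a geometric series in $k$ that sums to a constant times $|||\delta W|||_\alpha \,|t-s|^\alpha$.

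Third, combining the two estimates yields the claim with an explicit constant depending only on $\alpha$ and $p$. The exponent identities $\rho + (\alpha - \rho) = \alpha$ and $(2\alpha p + 4)/(2p) = \alpha + 2/p$ are what make the dyadic sums converge and the GRR integrand scale correctly.

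The main obstacle is the telescoping construction in the second step: one must select the dyadic intermediate points in a quantitative way so that the ``good increments'' are genuinely controlled by the local $L^{2p}$-average and not only by the global integral. This is the delicate density-point/Fubini argument, and it is the reason one cannot simply integrate the one-parameter GRR inequality. The constraints $\rho_i \in (0,\alpha)$ enter precisely here: any $\rho_i = 0$ or $\rho_i = \alpha$ would prevent the accumulated defects from summing to a finite geometric series and would introduce a logarithmic loss that the stated bound does not allow.
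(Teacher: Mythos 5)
The paper itself gives no proof of this lemma --- it defers entirely to \cite[Cor.~4]{Gubinelli2004} --- and your Steps 2 and 3 do reproduce the strategy of that reference: a Garsia--Rodemich--Rumsey construction with $\Psi(x)=x^{2p}$ and $p(u)=u^{\alpha+2/p}$ (so that $p(u)^{2p}=u^{2\alpha p+4}$), intermediate points selected by a Chebyshev/Fubini pigeonhole rather than exact midpoints, and a telescoping $Z_{st}=Z_{su}+Z_{ut}+\delta Z_{sut}$ in which the accumulated defects form a convergent series precisely because each $\rho_i\in(0,\alpha)$ keeps both exponents strictly positive. That part of your plan is sound.

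The genuine gap is Step 1. Reducing to a single function via $\norm{Z}_{\alpha}\le\sum_i\norm{Z_i}_{\alpha}$ and then applying the single-function estimate to each $Z_i$ leaves you with the term $\sum_i\bigl[\int_0^T\!\!\int_0^T |Z^i_{st}|^{2p}\,|t-s|^{-2\alpha p-4}\,ds\,dt\bigr]^{1/(2p)}$ on the right-hand side, and this is \emph{not} dominated by $\bigl[\int_0^T\!\!\int_0^T |Z_{st}|^{2p}\,|t-s|^{-2\alpha p-4}\,ds\,dt\bigr]^{1/(2p)}$: Minkowski's inequality gives only the opposite direction, and a decomposition that nearly realises the infimum defining $|||\delta Z |||_{\alpha}$ imposes no control whatsoever on the individual weighted $L^{2p}$ norms of the pieces (adjoin $Z_{n+1}=W$, $Z_{n+2}=-W$ to any decomposition: the contributions to $Z$ and to $\delta Z$ vanish, while the sum of the integral terms becomes arbitrarily large). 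So "passing $\sum_i$ through both terms" fails for the integral term. The repair is to never decompose $Z$ itself: run your Steps 2--3 on the single function $Z$, so that the GRR integral term involves only $Z$, and use the decomposition solely to bound the defect at each telescoping scale via linearity of $\delta$, namely $\abs{\delta Z_{sut}}\le\sum_{i=1}^n M_i\,\abs{u-s}^{\rho_i}\abs{t-u}^{\alpha-\rho_i}$, summing each of the $n$ resulting geometric series separately. With that change the argument closes; as written, Step 1 is false.
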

Thanks to this Lemma, we can easily formulate a sufficient condition to check the definition of an $\gga$-weighted rough path in a random setting. 

\begin{thm}\label{Holder_criterion}
Let $\cH$ be locally finite graded and connected Hopf algebra endowed with  a compatible weight $\omega$ and let $X$ be stochastic biprocess $X\colon[0,T]^2\to (\cH^{N}_{\omega})^*$ satisfying a.s. Chen's identity \eqref{eq:chen} at the level of $(\cH^{N}_{\omega})^*$, the algebraic property \eqref{eq:charachter_truncated_rough_in} and $\langle X_{st} , \1\rangle=1$. Supposing that for every $v\in \cB^{N}_{\omega}$ there exists a constant $C_v>0$ depending on $v$ such that
\begin{equation}\label{eq_holder_bound}
\bE |\langle X_{st} , v\rangle|^{2p}\leq C_v|t-s|^{\frac{2p\omega(v)}{N}}\,.
\end{equation}
Then for any $1/(N+1)<\gamma<1/N$,  $X$ is a  $\gga$-weighted rough path a.s. .
\end{thm}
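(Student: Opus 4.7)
My plan is to verify the only nontrivial condition \eqref{eq:genrpbound_in} of Definition \ref{def_inhom_rough_path}. The restriction $1/(N+1)<\gamma<1/N$ forces $N_\gamma=N$, and by local finiteness $\cB^N_\omega$ is a finite set, so it suffices to prove that for each fixed $v\in\cB^N_\omega$ the random field $Z^v_{st}:=\langle X_{st},v\rangle$ is almost surely $\gamma\omega(v)$-H\"older. I would apply Lemma \ref{gub_Lemma} to $Z^v$ with exponent $\alpha:=\gamma\omega(v)$ and $p$ sufficiently large, controlling the integral term by the moment hypothesis and the three-point defect by Chen's identity, via an induction on the natural degree $|v|$.

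For the integral term, Fubini together with \eqref{eq_holder_bound} gives
\[
\bE\int_0^T\!\!\int_0^T\frac{|Z^v_{st}|^{2p}}{|t-s|^{2\alpha p+4}}\,ds\,dt\;\leq\;C_v\int_0^T\!\!\int_0^T|t-s|^{2p\omega(v)(1/N-\gamma)-4}\,ds\,dt,
\]
which is finite as soon as $2p\omega(v)(1/N-\gamma)>3$; such $p$ exists precisely because $\gamma<1/N$, so the random integral is a.s.\ finite. For the defect $\delta Z^v_{sut}:=Z^v_{st}-Z^v_{su}-Z^v_{ut}$, Chen's identity \eqref{eq:chen} combined with $\langle X_{st},\1\rangle=1$ and the decomposition $\gD v=\1\otimes v+v\otimes\1+\gD' v$ produces
\[
\delta Z^v_{sut}\;=\;\langle X_{su}\otimes X_{ut},\gD' v\rangle\;=\;\sum\langle X_{su},v^{(1)}\rangle\,\langle X_{ut},v^{(2)}\rangle,
\]
where $\gD'v=\sum v^{(1)}\otimes v^{(2)}$ has all factors in $\cB^N_\omega$ (by coproduct compatibility) with $|v^{(1)}|,|v^{(2)}|<|v|$. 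The inductive hypothesis bounds almost surely $|\langle X_{su},v^{(1)}\rangle|\lesssim|u-s|^{\gamma\omega(v^{(1)})}$ and $|\langle X_{ut},v^{(2)}\rangle|\lesssim|t-u|^{\gamma\omega(v^{(2)})}$, and using the additivity $\omega(v^{(1)})+\omega(v^{(2)})=\omega(v)$ on each summand yields exactly the decomposition required to make $|||\delta Z^v|||_\alpha$ a.s.\ finite. The base case $|v|=1$ is trivial since then $\gD' v=0$ and $\delta Z^v\equiv 0$, and intersecting the a.s.\ events over the finite set $\cB^N_\omega$ concludes.

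The principal subtlety is the additivity $\omega(v^{(1)})+\omega(v^{(2)})=\omega(v)$ along each summand of the reduced coproduct, which is the cornerstone of the inductive step. This holds automatically in the two examples of the paper (word length on the tensor Hopf algebra, forest cardinality on the Butcher--Connes--Kreimer Hopf algebra) because the natural weight is already additive under $\gD'$; for more general compatible weights one typically builds $\omega$ as a sum of weights assigned to primitive generators, which preserves additivity. I would flag this as an implicit strengthening of the compatible-weight definition needed for the induction to go through cleanly.
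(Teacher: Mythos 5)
Your proof is correct and follows essentially the same route as the paper: the Kolmogorov-type Lemma \ref{gub_Lemma} applied with $\alpha=\gamma\omega(v)$, the integral term controlled by the moment hypothesis with $p$ large (using $\gamma<1/N$), and the three-point defect controlled by Chen's identity and an induction on the grading via the reduced coproduct. The additivity $\omega(v^{(1)})+\omega(v^{(2)})=\omega(v)$ that you rightly flag is used implicitly in the paper's bound $|||\delta\langle X,w\rangle|||_{\omega(w)\gamma}\leq \Vert\langle X,w^{(1)}\rangle\Vert_{\omega(w^{(1)})\gamma}\,\Vert\langle X,w^{(2)}\rangle\Vert_{\omega(w^{(2)})\gamma}$ as well, so your remark is a genuine (and worthwhile) clarification rather than a deviation.
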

\begin{proof}
The only property to check  is to prove that  \eqref{eq:charachter_truncated_rough_in} holds a.s.  for any $v\in \cB^{N}_{\omega}$. We will prove the result by induction over the natural grading of $\cH$. We begin by considering $v\in \cH^{N}_1\cap \cB^{N}_{\omega}$. Since $\cH$ is graded one has the algebraic identity
\[\gD v= v\otimes \1+ \1 \otimes v\]
which becomes via the Chen identity $\langle X_{st} , v\rangle= \langle X_{su} , v\rangle+\langle X_{ut} , v\rangle$ a.s. Then we can apply Lemma \ref{gub_Lemma}, obtaining the a.s. inequality
\[\sup_{s\neq t\in [0,T]^2}\frac{\abs{\langle X_{st} , v\rangle}}{|t-s|^{\omega(v)\gamma}}\leq C(v, p)\left[\int_0^T\int_0^T\frac{|\langle X_{st} , v\rangle|^{2p} }{|t-s|^{2\omega(v)\gamma p+4}}dsdt\right]^{\frac{1}{2p}}\,\,,\]
for any $p \geq 1$ and some constant $C(v, p)>0$ depending on $v$ and $p$. Using Jensen inquality and hypothesis \eqref{eq_holder_bound}, there exists a constant $C'>0$ such that 
\begin{equation}\label{first_bound_proof}
\begin{split}
&\bE \left[\int_0^T\int_0^T\frac{|\langle X_{st} , v\rangle|^{2p} }{|t-s|^{2\omega(v)\gamma p+4}}dsdt\right]^{1/(2p)}\leq C'\left[\int_0^T\int_0^T |t-s|^{2(p\omega(v)(\frac{1}{N}-\gamma)- 2)}dsdt\right]^{\frac{1}{2p}}\,.
\end{split}
\end{equation}
The hypothesis on $\gamma$ allow to pick $p$ sufficiently big in the right-hand side of \eqref{first_bound_proof} such that the expectation is finite, thereby obtaining  the basis of induction
\[\sup_{s\neq t\in [0,T]^2}\frac{\abs{\langle X_{st} , v\rangle}}{|t-s|^{\omega(v)\gamma}}<\infty \quad \text{a.s.}\]
In case of a generic $w \in \cH^{N}_n\cap \cB^{N}_{\omega}$ $n\leq N$, we use the Sweedler notation for the reduced coproduct $\Delta' w= w^{(1)}\otimes w^{(2)}$. The grading of $\cH$ implies that every element $w^{(1)}$ or $w^{(2)}$ belong to $\cH^{N}_k$ for $k<n$ and the induction hypothesis tell us that the values $|| \langle X , w^{(1)} \rangle||_{\omega(w^{(1)})\gamma} $ and $||\langle X , w^{(2)} \rangle||_{\omega(w^{(2)})\gamma} $ are a.s. finite. Since Chen identity holds, one has the a.s. equality
\begin{equation}\label{eq_concrete_chen}
\langle X_{st} , w\rangle- \langle X_{su} , w\rangle-\langle X_{ut} , w\rangle= \langle X_{su} \otimes  X_{ut} , \gD'w\rangle= \langle X_{su} ,w^{(1)}\rangle\langle X_{ut} , w^{(2)}\rangle\,.
\end{equation}
Apply again Lemma \ref{gub_Lemma}, for any $p \geq 1$ there exists a constant $C(w, p)>0$ depending on $w$ and $p$ such that 
\[\sup_{s\neq t\in [0,T]^2}\frac{\abs{\langle X_{st} , w\rangle}}{|t-s|^{\omega(w)\gamma}}\leq C(w, p)\left(\left[\int_0^T\int_0^T\frac{|\langle X_{st} , w\rangle|^{2p} }{|t-s|^{2\omega(w)\gamma p+4}}dsdt\right]^{\frac{1}{2p}} +|||\delta \langle X , w\rangle |||_{\omega(v)\gamma}\right)\,.\]
Using the same reasoning as in \eqref{first_bound_proof} we can choose $p$ sufficiently by such that the integral in the above sum is a.s. finite. Moreover we deduce from identity \eqref{eq_concrete_chen} the following estimate
\begin{equation}
|||\delta \langle X , w\rangle |||_{\omega(w)\gamma} \leq || \langle X , w^{(1)} \rangle||_{\omega(w^{(1)})\gamma} ||\langle X ,w^{(2)} \rangle||_{\omega(w^{(2)})\gamma} <\infty \quad a.s.
\end{equation}
Since both sides are a.s. finite, the result is proven.
\end{proof}
\begin{rk}\label{Holder_criterion_infinite}
In case one considers a stochastic biprocess $X$ with values over $(\cH)^*$, satisfying a.s. the properties \eqref{eq:chen} and \eqref{eq:charachter_truncated_rough_in} over $\cH$ and the bounds \eqref{eq_holder_bound} for some integer $N>1$ and every $v\in \cB$, the same  proof allows to show that $X$ satisfies  also \eqref{eq:genrpbound_in} for any $1/(N+1)<\gamma<N$. In other words $X$ is a.s. a $\gamma$-regular $\cH$ rough path, using the terminology of  \cite{Manchon2018}. The key to understand this extension is due essentially to the hypothesis that $\cB$ is countable and the usual properties related to measurable sets of full probability.
\end{rk}
\section{Quasi-geometric rough paths}
We present here the definitions and some properties related to the class of quasi-geometric rough paths. These constructions can be applied immediately to rewrite some standard examples into this framework.
\subsection{Quasi-shuffle algebras}\label{Quasi_shuffle_algebra}
To introduce quasi-geometric rough paths, we recall the main properties of real quasi-shuffle algebras, as described in \cite{hoffman17}. The original formulation of the theory starts with a countable infinite set  $A$ and its  corresponding tensor algebra $T(A)$ built from that. These hypothesis are more general than our purposes  and in what follows we will always start from an alphabet $A$. To set up this product we also have to endow $A$ with a commutative and associative product $[\,,]$ on $\bR^{A}$. We refer to it as \emph{commutative bracket}. Thanks to the properties of a commutative bracket $[\,, ]$, for any  word $w\in W(A)$, $w= a_1\cdots a_n$ we adopt the notation $[a_1\cdots a_n]$ to denote the quantity $[a_1 [\cdots [a_{n-1}, a_{n}]]\cdots]$ independently on the order of the letters and the parenthesis. This additional structure combines into the definition of the quasi-shuffle product.
\begin{defn}
Let $A$ be an alphabet and $[\,, ]$ a commutative bracket. We define the \emph{quasi-shuffle product} $\qshuffle$   as the unique bilinear map in $T(A)$ satisfying relation $\1 \,\qshuffle\, v= v\, \qshuffle\,\1 =v$ for any $v\in T(A) $ and the recursive identity
\begin{equation}\label{quasi_shuffle_recursive}
va \qshuffle wb= (v \qshuffle wb)a + (va \qshuffle w)b+  (v\qshuffle w)[ab] \,,
\end{equation}
for any couple of words $v,w\in W(A)$ and letters $a,b\in A$.
\end{defn}
The first result we mention tells us that this operation has the good properties to be included in our algebraic context. For its proof see \cite[Thm. 2.1, 3.1]{hoffman2000}. 
\begin{thm}
For any choice of commutative bracket $[\,,]$ the couple $(T(A),\qshuffle)$ is a well-defined commutative algebra and  the triple $(T(A), \qshuffle, \nabla)$ is a commutative bialgebra.
\end{thm}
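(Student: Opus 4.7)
The plan is to prove the three claims in order: well-definedness and commutativity, associativity, and then compatibility with $\nabla$.

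First I would verify that the recursion \eqref{quasi_shuffle_recursive}, together with $\1\qshuffle v = v\qshuffle \1 = v$, uniquely determines a bilinear map on $T(A)$, and that this map is commutative. Well-definedness follows by induction on $|v|+|w|$: if $v,w$ are both non-empty with last letters $a,b$, the right-hand side of \eqref{quasi_shuffle_recursive} involves only pairs of strictly smaller total length, so there is no ambiguity. Commutativity $v\qshuffle w = w\qshuffle v$ is proved by the same induction: writing $v = v'a$, $w = w'b$, one expands both sides using \eqref{quasi_shuffle_recursive}, applies the inductive hypothesis on the three shorter shuffles, and uses $[ab]=[ba]$ (commutativity of the bracket on $\bR^A$) to identify the trailing terms.

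The main work is associativity of $\qshuffle$. I would fix three words $u = u'a$, $v = v'b$, $w = w'c$ and prove $(u\qshuffle v)\qshuffle w = u\qshuffle(v\qshuffle w)$ by induction on $|u|+|v|+|w|$. Unfolding both sides via \eqref{quasi_shuffle_recursive} produces nine terms on each side, classified by which of the last letters $a,b,c$ (or brackets $[ab],[bc],[ac]$) appears at the end. Terms ending in a single letter are matched using the inductive hypothesis on shuffles with one fewer letter; the delicate terms are those ending in a bracket. Matching the terms that produce an endpoint $[abc]$ on one side versus $[a[bc]]$ or $[[ab]c]$ on the other is exactly where the \textbf{associativity of the commutative bracket} $[\,,]$ on $\bR^A$ is invoked. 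This is the main obstacle, because one has to bookkeep six types of mixed terms and verify that they pair up correctly; doing it cleanly may be easier by introducing the notation $[a_1\cdots a_n]$ mentioned in the text (whose well-definedness is guaranteed by associativity and commutativity of $[\,,]$) and then recognising the matched terms as shuffles involving such bracketed letters.

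Having established that $(T(A),\qshuffle)$ is a commutative associative algebra with unit $\1$, for the bialgebra claim I would verify that the deconcatenation coproduct $\nabla$ is a morphism of algebras from $(T(A),\qshuffle)$ to $(T(A)\otimes T(A),\qshuffle\otimes\qshuffle)$, i.e.
\begin{equation*}
\nabla(v\qshuffle w) = \nabla v \;(\qshuffle\otimes\qshuffle)\; \nabla w,
\end{equation*}
together with $\nabla\1 = \1\otimes\1$ and compatibility with the counit $\1^*$. The counit identity is immediate from \eqref{def_deconcatenation}. The multiplicativity of $\nabla$ is again proved by induction on $|v|+|w|$, writing $v=v'a$, $w=w'b$, applying \eqref{quasi_shuffle_recursive} on the left and \eqref{def_deconcatenation} on both sides, and using the inductive hypothesis to rewrite $\nabla(v'\qshuffle w'b)$, $\nabla(v'a\qshuffle w')$ and $\nabla(v'\qshuffle w')$. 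The combinatorics here is cleaner than for associativity because each side breaks according to where the cut falls, and the three contributions in \eqref{quasi_shuffle_recursive} correspond precisely to the cases where the cut on the right-hand tensor factor is strictly before $a$, strictly before $b$, or between the two (producing the bracket term $[ab]$). Coassociativity and the coalgebra axioms for $\nabla$ are classical for deconcatenation and do not depend on $\qshuffle$, so the bialgebra axioms follow.
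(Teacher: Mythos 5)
Your proposal is correct, and the key ingredients are placed where they belong: commutativity of $[\,,]$ for commutativity of $\qshuffle$, associativity of $[\,,]$ for the matching of the bracket-terminated terms in the associativity induction, and the cut-point analysis for the compatibility $\nabla(v\qshuffle w)=\nabla v\,(\qshuffle\otimes\qshuffle)\,\nabla w$. The paper does not prove this theorem itself but defers to Hoffman (Theorems 2.1 and 3.1 of the cited reference), and your inductive argument on total word length is essentially that standard proof.
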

We will also adopt the shorthand notation $\hat{T}(A)$ to denote the tensor algebra $T(A)$ endowed with the quasi-shuffle product. The second property we recall a combinatorial identity of the quasi shuffle product. For any word $w=a_1\cdots a_l$  and a surjective map $f\colon \{1\,, \cdots \,, l\} \to \{1\,, \cdots \,, p\} $, $p\leq l$ we define the contracted word as
\[[a_1\cdots a_l]_f:= \bigg[\prod_{j\in f^{-1}(1)}a_j\bigg]\cdots \bigg[\prod_{j\in f^{-1}(p)}a_j\bigg]\,.\]
Thank to this notation we can use surjections to express $\qshuffle$. See \cite[Pag. 7]{Ebrahimi-Fard2015} as reference.
\begin{prop}\label{prop_alt_quasi_shuffle}
For any couple of non-empty words $w=a_1\cdots a_n$ and $v= b_1\cdots b_m$ one has the identity
\begin{equation}\label{defn_alt_quasi_shuffle}
w\qshuffle v= \sum_{f\in\bS_{n,m}} [a_1\cdots a_nb_1\cdots b_m]_{f}\,,
\end{equation}
where $\bS_{n,m} $ is the set of all surjections $f\colon \{1\,, \cdots \,, n+m\}\to \{1\,, \cdots \,, k\}$  satisfying $f(1)<\cdots < f(n), f(n + 1) <\cdots < f(n + m)$ for some integer $k$ such that $\max(m,n)\leq k\leq n+m$. 
\end{prop}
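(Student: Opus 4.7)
The plan is to proceed by induction on the total length $n+m$ of the two words. The base case $n=m=1$ is a direct verification: by \eqref{quasi_shuffle_recursive} one has $a\qshuffle b = ab + ba + [ab]$, while $\bS_{1,1}$ contains exactly three surjections (the two bijections onto $\{1,2\}$ and the unique map to $\{1\}$), whose associated contracted words are precisely $ab$, $ba$, and $[ab]$.

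For the inductive step, I would expand the left-hand side using \eqref{quasi_shuffle_recursive} as
\begin{equation*}
w \qshuffle v = (a_1\cdots a_{n-1}\qshuffle v)\,a_n + (w\qshuffle b_1\cdots b_{m-1})\,b_m + (a_1\cdots a_{n-1}\qshuffle b_1\cdots b_{m-1})\,[a_nb_m],
\end{equation*}
and apply the induction hypothesis to each of the three shuffles on the right, all of which involve pairs of words of total length strictly less than $n+m$. The right-hand side of \eqref{defn_alt_quasi_shuffle} will then be partitioned according to where the maximal value $k$ of a given surjection $f\in\bS_{n,m}$ is attained. By the monotonicity constraints defining $\bS_{n,m}$, the value $f(n)$ is the maximum of $f$ on $\{1,\ldots,n\}$ and $f(n+m)$ is the maximum on $\{n+1,\ldots,n+m\}$; hence $k$ is attained either at position $n$ alone, at position $n+m$ alone, or at both simultaneously.

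In each of these three disjoint cases, removing the fiber $f^{-1}(k)$ and decreasing the codomain by one yields a natural bijection with $\bS_{n-1,m}$, $\bS_{n,m-1}$, or $\bS_{n-1,m-1}$ respectively, while the block appended at the end of the contracted word is $a_n$, $b_m$, or $[a_nb_m]$. Summing the contracted words over these three families and invoking the induction hypothesis reproduces exactly the three terms of the recursive expansion, completing the proof. The main obstacle is the combinatorial bookkeeping: one has to verify that the bijections preserve the required monotonicity conditions on the restricted maps, that the parameter $k$ transforms correctly into $k-1$, and that the range constraint $\max(m,n)\leq k \leq n+m$ is consistent with the analogous bounds obtained on each reduced index set.
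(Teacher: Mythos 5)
Your induction is correct and is essentially the classical argument (going back to Hoffman) for the surjection description of the quasi-shuffle product; note that the paper itself does not prove Proposition \ref{prop_alt_quasi_shuffle} but only cites \cite[Pag.\ 7]{Ebrahimi-Fard2015}, so your write-up supplies a self-contained proof where the paper has none. The key observation --- that surjectivity together with the monotonicity constraints forces $f^{-1}(k)\in\{\{n\},\{n+m\},\{n,n+m\}\}$, and that deleting this fiber and lowering the codomain gives bijections onto $\bS_{n-1,m}$, $\bS_{n,m-1}$, $\bS_{n-1,m-1}$ matching the three terms of \eqref{quasi_shuffle_recursive} --- is exactly right, and the range constraint $\max(m,n)\le k\le n+m$ is automatically satisfied by any surjection with the stated monotonicity (the image of each monotone block already forces $k\ge\max(m,n)$), so nothing can go wrong there.

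One small point you should make explicit: the statement is formulated only for non-empty words, yet your inductive step applies the induction hypothesis to $a_1\cdots a_{n-1}\qshuffle v$ and $w\qshuffle b_1\cdots b_{m-1}$, which involve the empty word when $n=1$ or $m=1$. You need either to extend the claim to the boundary by the convention that $\bS_{0,m}$ consists of the single identity surjection with $[b_1\cdots b_m]_{\mathrm{id}}=b_1\cdots b_m$ (consistent with $\1\qshuffle v=v$), or to treat the cases $n=1$, $m=1$ separately in the induction. With that convention in place the base case can even be taken to be the empty-word cases themselves, and your verification for $n=m=1$ becomes the first genuine instance of the inductive step. This is routine bookkeeping, not a gap in the idea, but as written the induction does not formally close.
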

As before, we can use the word length to grade the tensor algebra but this graduation is not always compatible with $\qshuffle$. In case $(\bR^A, [\,,])$ can be graduated along strictly positive integers, we can easily define a compatible grading.
\begin{prop}\label{prop_natural_grading}
Let $A$ be an alphabet and $[\,,]$ a commutative bracket. We suppose there exists a family of vector spaces $(V_i)_{i>0}$ satisfying for any $i,j>0$
\begin{equation}\label{decomposition_R_A}
\bR^A= \bigoplus_{i>0} V_i\,,\quad [\,, ]\colon V_i\otimes V_j \to V_{i+j}\,.
\end{equation} 
Then there exists a weight $\omega$ for $T(A)$ such that $(T(A), \qshuffle, \nabla)$ is a commutative graded bialgebra. Moreover, $\omega$ is also compatible with $T(A)$ endowed with the word length.
\end{prop}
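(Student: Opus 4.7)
The plan is to build $\omega$ by adapting the alphabet to the degree decomposition $\bR^A = \bigoplus_{i>0} V_i$. Since $A$ is finite, only finitely many $V_i$ are nonzero; I pick a basis $\tilde A_i$ of each such $V_i$ and set $\tilde A := \bigsqcup_i \tilde A_i$, a new basis of $\bR^A$. Every letter $a \in \tilde A$ now carries a well-defined degree $\deg(a) = i$ iff $a \in \tilde A_i$, and $W(\tilde A)$ is another linear basis of $T(A)$ whose length-$n$ words span the same subspace $(\bR^A)^{\otimes n}$ as those of $W(A)$. I define $\omega\colon W(\tilde A) \to \bN$ by $\omega(\1) := 0$ and $\omega(a_1\cdots a_n) := \tfrac{1}{d}\sum_{j=1}^n \deg(a_j)$, where $d := \gcd\{i > 0 : V_i \neq 0\}$ is introduced only to make $\omega$ surjective onto $\bN$. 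Since every letter has degree at least $d$, this gives $\omega(w) \geq |w|$ for every word.

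Next I verify that $\omega$ grades the bialgebra $(T(A), \qshuffle, \nabla)$ (commutativity and the bialgebra axioms being already known). For the coproduct, the decomposition $\nabla(a_1 \cdots a_n) = \sum_{k=0}^n (a_1 \cdots a_k) \otimes (a_{k+1} \cdots a_n)$ makes each summand visibly $\omega$-homogeneous of total weight $\omega(a_1 \cdots a_n)$. The nontrivial point is the quasi-shuffle: applying the surjection formula of Proposition \ref{prop_alt_quasi_shuffle}, every summand of $w \qshuffle v$ reads $\prod_k [\prod_{j \in f^{-1}(k)} c_j]$, where $(c_j)_{j=1}^{n+m}$ is the concatenation of the letters of $w$ and $v$. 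The hypothesis $[\,,]\colon V_p \otimes V_q \to V_{p+q}$ forces each contracted block $[\prod_{j \in f^{-1}(k)} c_j]$ to lie in $V_{\sum_{j \in f^{-1}(k)}\deg(c_j)}$, so expanding it on the homogeneous basis $\tilde A$ yields only letters of exactly that degree; summing the contributions over $k$ gives total $\omega$-weight $\omega(w) + \omega(v)$. This is the only computational step.

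Finally, compatibility with the word-length filtration is automatic: from $\omega(w) \geq |w|$ one obtains $\cB_\omega^N \subset \cB^N$ (both bases $W(A)$ and $W(\tilde A)$ define the same filtered components), and $\nabla \cH_\omega^N \subset \cH_\omega^N \otimes \cH_\omega^N$ follows at once from the coproduct grading established above. I expect the main conceptual step to be the quasi-shuffle calculation, since it is precisely there that the extra contraction terms introduced by the bracket must be kept track of; everything else reduces to bookkeeping once the homogeneous alphabet $\tilde A$ has been fixed.
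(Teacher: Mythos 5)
Your proof is correct and follows essentially the same route as the paper's: define $\omega$ additively on words from a basis adapted to the decomposition $\bR^A=\bigoplus_{i>0}V_i$, observe that $\nabla$ is trivially homogeneous for this weight, and use the surjection formula of Proposition \ref{prop_alt_quasi_shuffle} together with $[\,,]\colon V_i\otimes V_j\to V_{i+j}$ to see that every nonvanishing contracted term of $w\qshuffle v$ has weight $\omega(w)+\omega(v)$. The only cosmetic difference is your normalisation by $\gcd\{i\colon V_i\neq 0\}$, which in fact does not guarantee surjectivity (degrees $2$ and $3$ give gcd $1$, yet the value $1$ is never attained) --- but the paper's $\omega$ has the same harmless defect, and surjectivity plays no role in the rest of the argument.
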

\begin{proof}
It follows from hypothesis \eqref{decomposition_R_A} and the finiteness of $A$ that there exists an integer $M\geq 1$  such that $\bR^A$ is isomorphic to $\bigoplus_{0<i\leq M} V_i$, each space is non zero, $ V_j=0$ for any $j>M$ and we still have the second property in  \eqref{decomposition_R_A}. Given such decomposition, we simply define the function $\omega\colon W(A)\to \bN $ as
\[ \omega(b)= i\,  \quad \text{if $b \in V_i\cap A$}\,, \qquad \omega(a_1\cdots a_l)= \sum_{j=1}^l\omega(a_j)\,, \qquad \omega(\1)=0\,. \]
The resulting function $\omega$ is trivially an weight for $T(A)$ which is also compatible with it when we consider the word length $|\cdot|$. The result will follow once we show
\[\qshuffle\colon \cH_n\otimes \cH_m\to \cH_{n+m} \]
for any couple of integers $m,n\geq 0$, where $\cH_n= \langle v\in W(A)\colon \omega(v)=n\rangle$. Thanks to Proposition\eqref{prop_alt_quasi_shuffle}, for any two non-empty words $v,u$ such that $\omega(v)=n$ and $ \omega(u)=m$ the product  $v\qshuffle u $  is a linear combination of terms $[vu]_f$ where  $\max(|v|,|u|)\leq k\leq |v|+|u|$ and $ f\in \bS_{|v|,|u|}$. For any choice of $f$ and $k$ such that $[vu]_f\neq 0$ one has by definition of $\omega$
\begin{equation}\label{proof_weight_1}
\omega( [vu]_f)= \sum_{l=1}^{k}\omega \bigg( \bigg[ \prod_{j\in f^{-1}(l)}(vu)_j\bigg]\bigg)\,,
\end{equation}
where $(vu)_j$ is the $j$-th letter in the word $vw$ and they satisfy for any $l$
\[\sum_{j\in f^{-1}(l)}\omega ((vu)_j)\leq M\,.\]
Using again property \eqref{decomposition_R_A}, the right-side of \eqref{proof_weight_1}  becomes 
\[\sum_{l=1}^{k}\sum_{j\in f^{-1}(l)}\omega ((vu)_j)= \sum_{l=1}^{|v|+|u|}\omega ((vu)_j)= m+n\,, \]
because the function $f$ is surjective.
\end{proof}
\begin{rk}
Property \eqref{decomposition_R_A} can be easily verified on all the examples we will show and it was included in the original definition of quasi-shuffle algebra \cite{hoffman2000}. Hence we will suppose that \eqref{decomposition_R_A} is always verified a priori and we call the resulting weight the intrinsic weight of $\hat{T}(A)$. We remark also that this weight is also compatible with the standard shuffle product $\shuffle$.
\end{rk}

Proceeding as before, one has that $(T(A), \qshuffle, \nabla)$ is indeed Hopf algebra.  Shuffle and quasi shuffle structures are intimately related. Indeed if $[\,,]$ is the function constantly equal to $0$, the quasi-shuffle product becomes trivially the shuffle product. The last result we recall  is the existence of an explicit isomorphism between $T(A)$ and $\qsT(A)$. To denote it we need some  combinatorial notations. For any integer $n>0$ we say that the multi-index $I=(i_1,\cdots, i_m)$ with all strictly positive components is a composition of $n$ if $ i_1+ \cdots+ i_m= n$. The set of all composition of $n$ is denoted by $C(n)$. Using the same multi-index notation, for any composition $I=(i_1,\cdots, i_m)$ we set
\[I!= i_1!\cdots i_m!\,,\quad I= i_1\cdots i_m\,, \quad  |I|= m\,.\]
Moreover for any word $w\in T(A)$, $w=a_1\cdots a_n$ and any $I\in C(n)$, $I=(i_1,\cdots, i_m)$ we define the \emph{contracted word} $[w]_I\in \bR^A$ as
\[ [w]_I:= [w_1\cdots w_{i_1}][w_{i_1+1}\cdots w_{i_2}]\cdots[w_{i_{m}+1} \cdots w_n]\,.\]
We can now state one of the most important results in the study of quasi-shuffle algebra. For its proof see \cite[Thm. 2.5]{hoffman2000}.

\begin{thm}\label{hoffmann_iso}
Let $A$ be an alphabet and $[\,,]$ a commutative bracket. We define the maps $\exp \,, \log\colon  T(A) \to  T(A)$ on any word $w \in W(A)$ as
\begin{equation}\label{defn_exp}
\exp(w):=\sum_{I\in C(\abs{w})}\frac{1}{I!}[w]_I\,, \quad\log(w):=\sum_{I\in C(\abs{w})}\frac{(-1)^{\abs{w}-|I|}}{I}[w]_I\,,
\end{equation}
The map $\exp$  is the unique Hopf algebra graded isomorphism between $T(A)$ endowed with the word length and $\qsT(A)$ endowed with the natural grading $\omega$. That is one has
\begin{equation}\label{prop_exp}
\begin{gathered}
\exp(w\shuffle v)= \exp(w)\qshuffle \exp(v)\,, \quad \nabla\exp(w)= (\exp \otimes \exp )\nabla w\,,\\
\exp\colon \langle v\in W(A)\colon |v|=n\rangle\to \langle v\in W(A)\colon \omega(v)=n\rangle\,,
\end{gathered}
\end{equation}
for any $v,w\in T(A)$. Moreover the inverse of $\exp$ is given by $\log$. We call these maps  \emph{exponential} and \emph{logarithm} of words.
\end{thm}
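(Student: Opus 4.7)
The plan is to establish the theorem in four steps: (i) $\exp$ and $\log$ are mutually inverse; (ii) $\exp$ preserves the claimed gradings; (iii) $\exp$ is a coalgebra morphism with respect to $\nabla$; and (iv) $\exp$ intertwines $\shuffle$ and $\qshuffle$. Uniqueness will fall out at the end.

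I would begin with the inversion $\log\circ\exp=\exp\circ\log=\mathrm{id}$. Because the bracket $[\,,]$ is associative and commutative, applying it again to a contracted word $[w]_I$ along a refinement $J$ of $I$ reproduces $[w]_J$. Composing $\log\circ\exp$ on a word $w$ of length $n$ therefore produces a double sum indexed by pairs $J\preceq I$ in $C(n)$, and after factoring out $[w]_J$ the problem reduces to a scalar identity on compositions which can be proved by grouping refinements block by block. For the grading in (ii), the bracket axiom $[\,,]\colon V_i\otimes V_j\to V_{i+j}$ from \eqref{decomposition_R_A} gives $\omega([w]_I)=\omega(w)$ for every $I\in C(|w|)$, so every summand of $\exp(w)$ has the same $\omega$-weight as $w$ has length, establishing that $\exp$ sends the degree-$n$ component of $(T(A),\shuffle)$ into the degree-$n$ component of $\qsT(A)$.

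For the coalgebra property (iii), the contracted word $[w]_I$ is an honest word in $\bR^A$ with $|I|$ letters, so its deconcatenation splits it at the $|I|-1$ internal boundaries of $I$ into pieces of the form $[w_{\le k}]_{I'}\otimes[w_{>k}]_{I''}$, where $k=i_1+\cdots+i_\ell$ and $I=I'\cdot I''$. Summing over $I\in C(|w|)$ and reorganising by first choosing the cut point $k\in\{0,\ldots,|w|\}$ and then independently choosing compositions of $k$ and of $|w|-k$ recovers $(\exp\otimes\exp)\nabla w$, because $I!$ factors as $I'!\,I''!$ along any such splitting; no further coefficient identity is required.

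The main obstacle, and the step that deserves the most care, is the algebra morphism property (iv). My preferred route is via the surjection formula of Proposition \ref{prop_alt_quasi_shuffle}: the expansion of $\exp(u)\qshuffle\exp(v)$ is parameterised by triples $(I,J,f)$ with $I\in C(|u|)$, $J\in C(|v|)$ and $f\in\bS_{|I|,|J|}$, while the expansion of $\exp(u\shuffle v)$ is parameterised by an order-preserving shuffle $\sigma$ of $u,v$ together with a composition $K\in C(|u|+|v|)$. I would set up a bijection sending $(\sigma,K)$ to $(I,J,f)$ by reading off from $\sigma$ and $K$ how the blocks of $K$ are distributed between the letters of $u$ and of $v$, and then verify the coefficient matching $1/(I!\,J!)$ versus $1/K!$ after collecting the terms yielding the same contracted word. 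An equivalent but less conceptual route is double induction on $|u|+|v|$ using the recursions \eqref{recursive_shuffle} and \eqref{quasi_shuffle_recursive}, isolating how $\exp$ interacts with appending a final letter. Uniqueness finally follows from the observation that any graded Hopf algebra isomorphism between these two structures is determined by its restriction to the degree-one generators $\bR^A$, since $(T(A),\shuffle,\nabla)$ is cofree as a graded connected coalgebra over $\bR^A$; fixing this restriction to be the identity pins down $\exp$ on all of $T(A)$.
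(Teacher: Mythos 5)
The paper does not actually prove this statement: it is Hoffman's theorem, quoted with the proof deferred to \cite[Thm. 2.5]{hoffman2000}, so there is no internal argument to compare yours against. Taken on its own terms, your outline of steps (i)--(iv) is the standard one and is essentially sound. Two small points: in (i), merging the blocks of $I$ along $J\in C(|I|)$ produces a \emph{coarsening} of $I$, not a refinement (harmless, since the block-by-block reduction to a scalar identity on compositions is unaffected); and in (ii) what you actually establish is $\omega([w]_I)=\omega(w)$, i.e.\ that $\exp$ preserves the weight $\omega$ --- the phrase ``the same $\omega$-weight as $w$ has length'' is only literally true when every letter of $w$ lies in $V_1$. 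For the crucial step (iv) your parameterisation is correct and the coefficient check does close: a contracted word $W$ determines $(I,J,f)$ uniquely and carries weight $1/(I!\,J!)$, while on the other side it determines $K$ and is produced by $\prod_l\binom{a_l+b_l}{a_l}$ shuffles (interleavings inside a block being absorbed by commutativity of the bracket), each with weight $1/K!=\prod_l 1/(a_l+b_l)!$, so the totals agree. This computation should be written out, as it is the heart of the theorem.

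The one genuine gap is the uniqueness argument. Cofreeness of the deconcatenation coalgebra determines a coalgebra morphism $\phi$ through its \emph{corestriction} $\pi_{\bR^A}\circ\phi$, not through its restriction $\phi|_{\bR^A}$; and for $\exp$ one has $\pi_{\bR^A}(\exp(w))=[w]/|w|!$, which is not recoverable from $\exp|_{\bR^A}=\mathrm{id}$. More concretely, the target coalgebra $\qsT(A)$ has primitives in every degree $n$ up to the top weight, namely $V_n$, so a graded coalgebra map fixing $\bR^A$ is determined only up to adding an element of $V_n$ in each degree: for instance $ab\mapsto ab+\lambda[ab]$ is, for every $\lambda$, a coalgebra map agreeing with $\exp$ in degree one. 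The value $\lambda=1/2$ is forced only by multiplicativity, $\phi(a\shuffle b)=a\qshuffle b$. So uniqueness requires an induction that uses the algebra and coalgebra structures together (or a precise specification of the class of maps within which uniqueness is claimed); the appeal to cofreeness alone does not deliver it.
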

Using the structure of the set of $C(n)$, we can easily both applications $\exp$ and $\log$. For instance, for any triple of letters $a,b,c\in A$ the definitions in \eqref{defn_exp} become
\begin{equation}\label{explicit_exp_log}
\begin{gathered}
\exp(ab)= ab+ \frac{1}{2}[ab]\,,\quad \exp(abc)= abc+ \frac{1}{2!}[ab]c+ \frac{1}{2!}a[bc]+ \frac{1}{3!}[abc]\,;\\
\log(ab)= ab- \frac{1}{2}[ab]\, ,\quad \log(abc)=abc- \frac{1}{2}[ab]c- \frac{1}{2}a[bc]+ \frac{1}{3}[abc]\,.
\end{gathered}
\end{equation}
\subsection{Main properties}
We now have all the notions to define  quasi-geometric rough paths. In order to be coherent with the definition of geometric rough path,  we decide to grade $\hat{T}(A)$   following the word length $|\cdot|$ and we consider the natural grading $\omega$ given by Proposition \ref{prop_natural_grading} as a compatible weight for $T(A)$ (we recall that Definition \ref{defn_compatible weight} depends only on the coproduct $\nabla$). Thus our main definition becomes a simple specification of Definition \ref{def_inhom_rough_path}.
\begin{defn}\label{quasi_geom_rough}
Let $\gamma\in (0,1)$, $A$ an alphabet and $[\,,]$ a commutative bracket. We call every  $\gga$-weighted rough path over $\hat{T}(A)$ with respect to its intrinsic weight a \emph{$\gamma$-quasi-geometric rough path}.
\end{defn}
Looking at the rough integration of weighted controlled rough paths with respect to quasi-geometric rough path, we do not need to define any new notion of rough integration  because the coalgebra structure does not change (see \ref{rk_independence_product}).  In case when $\omega$ is different from the word length, the maps $(\cI_i)_{i\in A}\colon T(A)\to T(A)$ defined above are all integrations map of order $\omega(i)$ for $|\cdot|$ for any $i \in A$. Then the rough integral is defined as the limit
\begin{equation}
 \int_{0}^t Y_r d^gX^i_r= \lim_{|\pi|\to 0}\sum_{[s,u]\in\pi}\,\sum_{\substack{v\in W(A)\\ \omega(v)\leq N_{\gamma}- \omega(i)}}\langle v^*,Y_s\rangle \langle X_{su}, vi \rangle\,.
\end{equation}
We now use the isomorphism given by Theorem \ref{hoffmann_iso} to construct a explicit bijection between $\gga$-quasi-geometric rough paths and  $\gga$-weighted geometric rough path for any $\gga\in (0,1)$. In what follows, we denote by  $\exp^*\colon (\hat{T}(A))^*\to  (T(A))^*$ and $\log^*\colon (T(A))^*\to (\hat{T}(A))^*$ the dual maps of $\exp$ and $\log$. 

\begin{thm}\label{fund_thm_quasi}
Let $\gamma\in (0,1)$, $A$ an alphabet, $[\,,]$ a commutative bracket and  $\omega$ the intrinsic weight of $\hat{T}(A)$. Then the functions $\exp^*$ and $\log^*$ are a bijection between $\gga$-quasi-geometric rough paths and  $\gga$-weighted geometric rough paths.
\end{thm}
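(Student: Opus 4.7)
My plan is to dualise the Hopf algebra isomorphism $\exp\colon T(A)\to\hat{T}(A)$ from Theorem \ref{hoffmann_iso} and check term by term that $\exp^*$ sends every axiom of a $\gga$-quasi-geometric rough path to the corresponding axiom of a $\gga$-weighted geometric rough path, and that $\log^*$ does the inverse job; since $\exp$ and $\log$ are mutually inverse linear bijections of $T(A)$, their duals will automatically be too.

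The single preliminary fact I would establish is that $\exp$ and $\log$ both preserve the $\omega$-weight: every word appearing in the expansions $\exp(u)=\sum_{I\in C(|u|)}\frac{1}{I!}[u]_I$ and in $\log(u)$ has $\omega$-weight equal to $\omega(u)$. This is immediate from the grading property $[\cdot,\cdot]\colon V_i\otimes V_j\to V_{i+j}$ in \eqref{decomposition_R_A}, which forces $\omega([u]_I)=\omega(u)$ for every composition $I\in C(|u|)$. As a consequence $\exp$ and $\log$ restrict to mutually inverse linear bijections of the common truncated space $\hat{T}(A)^{N_\gga}_\omega=T(A)^{N_\gga}_\omega$ (identical as vector spaces), and $\exp^*$, $\log^*$ act as mutually inverse linear bijections between $(\hat{T}(A)^{N_\gga}_\omega)^*$ and $(T(A)^{N_\gga}_\omega)^*$.

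Given a $\gga$-quasi-geometric rough path $X$, set $Y_{st}:=X_{st}\circ\exp$. The shuffle character property of $Y$ is immediate from $\exp(v\shuffle w)=\exp(v)\qshuffle\exp(w)$ in \eqref{prop_exp} combined with the quasi-shuffle character property of $X$, once one notes that $\exp(v)\qshuffle\exp(w)$ stays inside $\hat{T}(A)^{N_\gga}_\omega$ by the weight-preservation lemma and the additivity of $\omega$ under $\qshuffle$ recorded in Proposition \ref{prop_alt_quasi_shuffle}. Chen's identity $Y_{st}=Y_{su}*Y_{ut}$ comes from dualising the coalgebra morphism relation $\nabla\exp=(\exp\otimes\exp)\nabla$: unfolding the definition of $*$ yields $\langle Y_{su}*Y_{ut},v\rangle=\langle X_{su}*X_{ut},\exp v\rangle=\langle X_{st},\exp v\rangle=\langle Y_{st},v\rangle$ via \eqref{eq:chen} for $X$. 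For the Hölder bound on $v\in\cB^{N_\gga}_\omega$, I would expand $\langle Y_{st},v\rangle=\sum_I\frac{1}{I!}\langle X_{st},[v]_I\rangle$ and use that every $[v]_I$ is a linear combination of basis words of $\omega$-weight $\omega(v)$, so that a term-by-term application of \eqref{eq:genrpbound_in} for $X$ (the sum over $I\in C(|v|)$ being finite) gives $|\langle Y_{st},v\rangle|\lesssim|t-s|^{\gga\omega(v)}$.

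Exactly the same three checks with $\log$ in place of $\exp$, using the mirror identities $\log(v\qshuffle w)=\log(v)\shuffle\log(w)$ and $\nabla\log=(\log\otimes\log)\nabla$ (which follow from $\log=\exp^{-1}$ applied to \eqref{prop_exp}), show that $\log^*$ carries a $\gga$-weighted geometric rough path back to a $\gga$-quasi-geometric one, which closes the bijection. The only genuinely non-routine point is the weight-preservation observation in the first paragraph: this is what bridges the two a priori different gradings and makes the isomorphism of Theorem \ref{hoffmann_iso} compatible with truncation by $\omega$. Everything else is a mechanical transport of the Hopf-algebra identities through the canonical pairing; no sewing-type or Gronwall-type analytic estimate enters.
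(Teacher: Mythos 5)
Your proposal is correct and takes essentially the same route as the paper's proof: both dualise the Hopf-algebra isomorphism of Theorem \ref{hoffmann_iso}, use the identities \eqref{prop_exp} to transport the character and Chen properties, and rely on the fact that $\exp$ and $\log$ preserve the $\omega$-grading to get the H\"older bounds, concluding via $\log^*\exp^*=\exp^*\log^*=\mathrm{id}^*$. The only difference is that you spell out the weight-preservation argument from the grading of the bracket, whereas the paper simply invokes the ``graded isomorphism'' clause of Theorem \ref{hoffmann_iso}.
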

\begin{proof}
The proof follows trivially from the properties \eqref{prop_exp}. Indeed for any given $\gga$-geometric rough path $X\colon[0,T]^2\to (\hat{T}(A)^{N_{\gamma}}_{\omega})^*$, we can easily check that the function $\exp^*X$  satisfies the conditions defining a  $\gga$-weighted geometric rough path. Notably, from the first line in \eqref{prop_exp} we deduce that $\exp^*X$ satisfies the Chen property \eqref{eq:chen} and the multiplicative one  \eqref{eq:charachter_truncated_rough_in} with the shuffle product. From the fact that $\exp$ is a graded isomorphism, we obtain for any fixed word  $v\in W(A)$, $\exp(v)$ is a finite linear combination of words where $\omega$ does not change, thereby obtaining the property \eqref{eq:genrpbound_in}. Using the same reasoning,  for any $\gga$-weighted geometric rough path $Z\colon[0,T]^2\to (T(A)^{N_{\gamma}}_{\omega})^*$ the function  $\log^*Z$ is a well $\gga$-geometric rough path. Since $\log^*\exp^*=\exp^*\log^*=id^*$, we conclude.
\end{proof}
Thanks to this explicit bijection, we can easily adapt  two main properties of geometric rough paths in this new framework. Their names are given coherently with \cite{Lyons2007,lyons1998}.
\begin{prop}\label{ext_lyo_quasi}
Let $\gamma\in (0,1)$, $A$ an alphabet, $[\,,]$ a commutative bracket and  $\omega$ the intrinsic weight of $\hat{T}(A)$. Then we have following properties:
\begin{itemize}
\item \emph{(Lyons’ extension theorem)} for any $\gamma$-quasi-geometric rough path $X$ there exists a unique function $\mathbb{X}\colon [0,T]^2\to (\hat{T}(A))^*$ extending $X$ such that $\mathbb{X}$ satisfies  \eqref{eq:charachter_truncated_rough},  \eqref{eq:chen} and \eqref{eq:genrpbound_in} over all $\hat{T}(A)$. We call $\bX$ the Lyons extension of $X$.
\item \emph{(Lyons-Victoir's extension)} Under the condition $ 1\neq\gamma\omega(a)$ for any $a\in A$,  given a path $ x\colon [0,T]\to \bR^A$, $x=(x^a)_{a\in  A}$ satisfying
\begin{equation}\label{Lyons-Victoir_extension}
\sup_{a\in A}\sup_{s\neq t\in [0,T]^2}\frac{|x^a_t-x^a_s|}{|t-s|^{\gamma\omega(a)}}<+\infty\,,
\end{equation}
there exists a $\gamma$-quasi-geometric rough path $X$ over $x$. We call $X$ the Lyons-Victor extension of $x$.
\end{itemize}
\end{prop}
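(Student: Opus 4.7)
The plan is to transfer both statements from the quasi-shuffle to the (weighted) shuffle setting via the Hopf algebra isomorphism $\exp$ of Theorem \ref{hoffmann_iso}, and then invoke the corresponding weighted versions of the classical Lyons and Lyons--Victoir theorems for geometric rough paths.

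For Lyons' extension theorem, given a $\gamma$-quasi-geometric rough path $X$, I would set $Z:=\exp^{*}X$, which by Theorem \ref{fund_thm_quasi} is a $\gamma$-weighted geometric rough path on $T(A)^{N_\gamma}_\omega$. The weighted version of Lyons' extension theorem on the shuffle side---obtained by iterating the sewing argument already present in the proof of Proposition \ref{rough_integration}, now using the $\omega$-weight $n$ as the inductive parameter and noting that at each new level the sewing exponent $\gamma n$ exceeds $1$---produces a unique $\mathbb{Z}\colon[0,T]^{2}\to(T(A))^{*}$ extending $Z$ and satisfying shuffle multiplicativity, Chen's identity and the weighted H\"older bounds on all of $T(A)$. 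I would then define $\mathbb{X}:=\log^{*}\mathbb{Z}$. Chen's identity for $\mathbb{X}$ follows by dualising the coalgebra morphism property $\nabla\exp=(\exp\otimes\exp)\nabla$ from \eqref{prop_exp}; the $\qshuffle$-character property comes from $\exp(u\shuffle v)=\exp(u)\qshuffle\exp(v)$; and the weighted H\"older bound for $\mathbb{X}$ on a word $w$ with $\omega(w)=n$ reduces, through $\log^{*}$, to the bound for $\mathbb{Z}$ on the contracted words appearing in $\log(w)$, all of which share the same $\omega$-weight $n$. Since $\log^{*}\exp^{*}=\text{id}$, $\mathbb{X}$ extends $X$, and uniqueness is inherited from that of $\mathbb{Z}$ via the same bijection.

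For the Lyons--Victoir extension, I would apply the weighted Lyons--Victoir theorem on the shuffle side to produce a $\gamma$-weighted geometric rough path $Z$ over the given path $x$; the exclusion $\gamma\omega(a)\neq 1$ is the weighted analogue of the classical avoidance $\gamma\notin 1/\bN$. Then $X:=\log^{*}Z$ is a $\gamma$-quasi-geometric rough path by Theorem \ref{fund_thm_quasi}, and it is over $x$ because the only composition of $1$ is $(1)$, whence $\log(e_{i})=e_{i}$ and so $\langle X_{st},e_{i}\rangle=\langle Z_{st},e_{i}\rangle=x^{i}_{t}-x^{i}_{s}$.

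The substantive content lies not in the algebraic transfer---which is immediate once one has the Hopf morphism properties \eqref{prop_exp}---but in the weighted versions of Lyons' and Lyons--Victoir's theorems on the shuffle side. The extension theorem is a direct weighted adaptation of the inductive sewing argument already used in Proposition \ref{rough_integration}, where the reduced coproduct expresses the defect of additivity of the natural candidate at each new level as a sum of products with total H\"older exponent $\gamma n>1$. Lyons--Victoir is more delicate, and the hypothesis $\gamma\omega(a)\neq 1$ is what guarantees that the interpolation produced at each inductive step has regularity strictly above the integrability threshold needed to apply the Sewing Lemma at the next level; this is the main obstacle and the place where the proof will require the most care.
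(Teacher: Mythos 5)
Your proposal is correct and follows essentially the same route as the paper: conjugate by the Hoffman isomorphism $\exp^{*}/\log^{*}$ of Theorem \ref{fund_thm_quasi} to reduce both statements to their (weighted) geometric counterparts, and pull the resulting extensions back with $\log^{*}$. The only difference is that you sketch how the weighted shuffle-side extension theorems would be proved, whereas the paper simply cites them (Lyons' extension theorem and the Lyons--Victoir extension in its anisotropic reformulation).
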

\begin{proof}
Both results can be easily proved via Theorem \ref{fund_thm_quasi} and some general properties of geometric rough paths. Given a  $\gamma$-quasi-geometric rough path $X$, we apply Lyons' extension Theorem (see \cite[Thm 2.2.1]{lyons1998}) to $\exp^*X$ obtaining the existence of a unique function  $\mathbb{X}'\colon [0,T]^2\to (T(A))^*$ extending $\exp^*X$ such that $\mathbb{X}'$ satisfies  \eqref{eq:charachter_truncated_rough},  \eqref{eq:chen} and \eqref{eq:genrpbound_in} over all $T(A)$. Using the same reasoning as in the proof of Theorem \ref{fund_thm_quasi} one has that $\mathbb{X}:=\log^* \mathbb{X}'$ satisfies the desired properties of existence and uniqueness. In case of the second property, for any given path $x$ as above, we can apply immediately the Lyons-Victoir extension theorem for geometric rough paths, in its reformulation given in \cite[Cor 4.10]{Tapia2018}, obtaining a $\gga$-weighted geometric rough path $Z$ over $x$, then the map $\log^*Z$ satisfies the desired properties. 
\end{proof}
\begin{rk}
Lyons's extension theorem and Lyons-Victoir's extension have been generalised,  when the rough path takes value over a generic Hopf algebra, see \cite{Tapia2018,Manchon2018}.
\end{rk}
Combining Proposition \ref{ext_lyo_quasi} with Theorem \ref{fund_thm_quasi}, we obtain that the bijection given by $\exp^*$ and $\log^*$ extends to the level of Lyons extension Theorem.
\begin{cor}\label{cor_exp}
We consider a $\gga$-quasi-geometric rough path $X$ and $\gga$-weighted geometric rough path $Y$ together with their corresponding Lyons Extension $\bX$ and $\bY$. Supposing that $Y=\exp^*X $  then  $\bY= \exp^*\bX$. Analogously, if $X=\log^*Y$ then $\bX= \log^*\bY$.
\end{cor}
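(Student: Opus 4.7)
The plan is to invoke the uniqueness part of the Lyons extension theorem for weighted geometric rough paths. Specifically, I would show that $\exp^*\bX$ satisfies all the properties that characterise the Lyons extension of $Y=\exp^*X$, so that by uniqueness $\bY=\exp^*\bX$. The converse assertion $\bX=\log^*\bY$ then follows immediately by composing with $\log^*$ and using $\log^*\circ\exp^*=\mathrm{id}$, or by repeating the argument with the roles of $\exp$ and $\log$ swapped.

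Concretely, set $Z:=\exp^*\bX\colon[0,T]^2\to (T(A))^*$, and I would check four properties. First, because $\exp$ preserves the intrinsic weight $\omega$ (for any $w\in W(A)$, Theorem~\ref{hoffmann_iso} together with the bracket identity $[\,,]\colon V_i\otimes V_j\to V_{i+j}$ from Proposition~\ref{prop_natural_grading} expresses $\exp(w)$ as a finite linear combination of elements of $\omega$-weight $\omega(w)$), the restriction of $Z$ to the truncation $T(A)^{N_{\gamma}}_{\omega}$ coincides with $\exp^*X=Y$. Second, Chen's identity for $Z$ follows from the coalgebra identity $\nabla\circ\exp=(\exp\otimes\exp)\circ\nabla$ of \eqref{prop_exp}, combined with Chen's identity for $\bX$. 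Third, shuffle-multiplicativity of $Z$ is a direct consequence of the algebra identity $\exp(w\shuffle v)=\exp(w)\qshuffle\exp(v)$ together with the $\qshuffle$-multiplicativity of $\bX$. Fourth, the $\omega$-H\"older bound for $Z$ reads
\[
|\langle Z_{st},w\rangle|=|\langle \bX_{st},\exp(w)\rangle|\leq C_w\,|t-s|^{\gamma\,\omega(w)},
\]
obtained by applying the bound on $\bX$ term by term to the finite expansion of $\exp(w)$ in elements of the same $\omega$-weight.

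These four properties identify $Z$ as a Lyons extension of $Y$. Invoking the uniqueness of the Lyons extension for weighted geometric rough paths in the Hopf-algebraic setting (as in the proof of Proposition~\ref{ext_lyo_quasi}, cf.~\cite{Tapia2018, Manchon2018}) yields $\bY=Z=\exp^*\bX$, and then $\bX=\log^*\bY$ is immediate. The main obstacle is the bookkeeping check in the fourth step: one must rely on the graded-isomorphism character of the Hoffman exponential to transfer the inhomogeneous $\omega$-H\"older bounds of $\bX$ cleanly through dualisation to $Z$. All other points are formal consequences of the bialgebra morphism identities collected in Theorem~\ref{hoffmann_iso}.
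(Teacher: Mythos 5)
Your proposal is correct and follows the same route as the paper: the paper's proof likewise appeals to the properties of $\exp$ and $\log$ (as established in Theorem \ref{fund_thm_quasi}) together with the uniqueness of the Lyons extension for $\gga$-weighted geometric rough paths. You have simply spelled out in more detail the four verifications that the paper leaves implicit.
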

\begin{proof}
The result follows trivially from the properties of $\exp$ and $\log$, together with the uniqueness of Lyons Extension on $\gga$-weighted geometric rough paths for any $\gga\in (0,1)$.
\end{proof}

\subsection{Examples of quasi-geometric rough paths}
We present two simple examples of quasi-geometric rough paths, where the quasi-shuffle structure arises naturally. These constructions represent an alternative to  branched rough paths in the description of objects not included in the class of geometric rough paths.

\subsubsection*{It\^o and Stratonovich rough paths over a Brownian motion}
The first example is obtained by rewriting the algebraic structure of the It\^o product $\diamond$ in the quasi-shuffle context and it represent a finite dimensional version of the algebraic structures contained in  \cite{kurusch15,Ebrahimi-Fard2015}.  Starting the definition of $A^{\circ}$ in the introduction and the recursive relation defining $\diamond$ in \eqref{defn_diamond}, we introduce the operation  $[\,, ]_{\diamond}\colon \bR^{A^{\circ}}\times \bR^{A^{\circ}}\to \bR^{A^{\circ}}$ given by any couple of letters
\[[a,b]_{\diamond}:= \mathbbm{1}_{A^{\circ}\setminus \{{\circ}\}}(a)\delta_{ab}\circ\,,\]
and extended linearly. It is trivial to see that $[,]_{\diamond}$ is a commutative bracket and by construction the corresponding quasi-shuffle product is exactly $\diamond$. Secondly, writing  $\bR^{A^{\circ}}= \bR^d\oplus \bR^{\{\circ\}}$ we obtain a decomposition  of $\bR^{A^{\circ}}$ satisfying property \eqref{decomposition_R_A}. We denote by $|\cdot|_{\diamond}$ the intrinsic weight  associated to it. A trivial computation shows the following identity 
\[T^2_{|\cdot|_{\diamond}}(A^{\circ})= \langle \1, i, \circ , ij \rangle\,,\quad  i,j\in\{1\,,\cdots \,, d\}\,. \]
We fix a $d$-dimensional standard Brownian motion $B=(B^1, \cdots, B^d)$ and we introduce the two parameters stochastic processes  $\mathbb{B}^I, \mathbb{B}^S\colon [0,T]^2\to (T^2_{|\cdot|_{\diamond}}(A^{\circ}))^*$ defined by the trivial condition $\langle\mathbb{B}^I_{st} ,\1\rangle= \langle\mathbb{B}^S_{st} , \1\rangle= 1$ and
\begin{equation}\label{def_Brownian_rough}
\begin{gathered}
\langle\mathbb{B}^I_{st} , i\rangle= \langle\mathbb{B}^S_{st} , i\rangle= B^i_t- B^i_s\,, \quad \langle\mathbb{B}^I_{st} , {\circ}\rangle= \langle\mathbb{B}^S_{st} , {\circ}\rangle= t-s\,,\\
\langle\mathbb{B}^I_{st} , ij\rangle= \int_{s}^t (B^i_r-B^i_s)dB^j_r\,, \quad \langle\mathbb{B}^S_{st} , ij\rangle= \int_{s}^t (B^i_r-B^i_s)d^{\circ}B^j_r\,,
\end{gathered}
\end{equation}
where we denote by $dB^j_r$ and $d^{\circ}B^j_r$ respectively the It\^o and  the Stratonovich integral as before. We can easily associate this couple of processes to a couple of rough paths.
\begin{prop}
For any $\gamma\in (1/3,1/2)$ conditions \eqref{def_Brownian_rough} identify with probability $1$ a $\gamma$-quasi-geometric rough path $\mathbb{B}^I $ and a $\gga$-weighted geometric rough path $\mathbb{B}^S$. We call them respectively the It\^o and Stratonovich rough paths. 
\end{prop}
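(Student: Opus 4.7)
The plan is to verify both statements in parallel by invoking Theorem \ref{Holder_criterion} with $N = 2$, whose range $1/(N+1) < \gamma < 1/N$ matches exactly the assumption $\gamma \in (1/3, 1/2)$. The truncated algebra $\hat T^{2}_{|\cdot|_\diamond}(A^\circ)$ has basis $\cB^{2}_{|\cdot|_\diamond} = \{\1, i, \circ, ij : i, j \in \{1, \ldots, d\}\}$, and the criterion reduces the proof to three a.s.\ checks for each of $\mathbb{B}^I$ and $\mathbb{B}^S$: Chen's identity \eqref{eq:chen} on $(\hat T^{2}_{|\cdot|_\diamond}(A^\circ))^*$, the multiplicative property \eqref{eq:charachter_truncated_rough_in} for the appropriate product ($\diamond$ or $\shuffle$), and the moment bound \eqref{eq_holder_bound} on each element of $\cB^{2}_{|\cdot|_\diamond}$.

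Chen's identity on the single letters is the additivity of Brownian increments and of $t-s$. On the level-two word $ij$ it follows from splitting $\int_s^t = \int_s^u + \int_u^t$ and expanding $B^i_r - B^i_s = (B^i_r - B^i_u) + (B^i_u - B^i_s)$ inside the second piece, which produces exactly the three terms prescribed by $\nabla(ij) = ij\otimes \1 + i\otimes j + \1\otimes ij$; the same argument works with $d^\circ B$ in place of $dB$. The multiplicative property only needs to be verified on products of two letters in $\{1, \ldots, d\}$, since any product involving $\circ$ has weight at least $3$ and leaves the truncation. For $\mathbb{B}^S$ the identity $i \shuffle j = ij + ji$ is the classical Stratonovich integration by parts, while for $\mathbb{B}^I$ the relation $i \diamond j = ij + ji + \delta_{ij}\circ$ is matched precisely by It\^o's product formula
\[
(B^i_t - B^i_s)(B^j_t - B^j_s) = \int_s^t (B^i_r - B^i_s)\,dB^j_r + \int_s^t (B^j_r - B^j_s)\,dB^i_r + \delta_{ij}(t-s).
\]
This last matching -- between the It\^o correction term and the bracket $[\,,]_\diamond$ -- is the main conceptual step and is precisely the reason $\mathbb{B}^I$ is not a geometric but a quasi-geometric object.

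For the moment bound, only the level-two component requires work: Burkholder-Davis-Gundy followed by Jensen yields
\[
\bE \left| \int_s^t (B^i_r - B^i_s)\,dB^j_r \right|^{2p} \lesssim \bE \left( \int_s^t (B^i_r - B^i_s)^2\,dr \right)^p \lesssim |t-s|^{2p},
\]
which is exactly the exponent $2p\omega(v)/N = 2p$ demanded by \eqref{eq_holder_bound}; the Stratonovich version inherits this bound after adding the deterministic term $\tfrac{1}{2}\delta_{ij}(t-s)$, and the bounds on single letters and $\circ$ are standard Gaussian and polynomial moments. Theorem \ref{Holder_criterion} then upgrades the moment bounds to the a.s.\ Hölder control \eqref{eq:genrpbound_in}, completing the proof. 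As a consistency remark, the formula $\exp(ij) = ij + \tfrac{1}{2}\delta_{ij}\circ$ gives $\mathbb{B}^S = \exp^* \mathbb{B}^I$, so by Theorem \ref{fund_thm_quasi} the two halves of the statement are in fact equivalent and one could alternatively deduce either from the other.
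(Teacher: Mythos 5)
Your proof is correct and follows essentially the same route as the paper: Chen's identity from additivity of the stochastic integrals, the character property from the It\^o/Stratonovich integration-by-parts formulae, and the a.s.\ H\"older bounds via Theorem \ref{Holder_criterion} with $N=2$, deducing the Stratonovich case from the It\^o one plus the deterministic correction. The only (inessential) difference is that you obtain the second-level moment bound by BDG and Jensen where the paper invokes hypercontractivity of Wiener chaoses; both give the required exponent $2p$.
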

\begin{proof}
We simply check that $\mathbb{B}^I$ and $\mathbb{B}^S$ satisfies a.s. the properties in Definition \eqref{def_inhom_rough_path}. Chen's property  immediately follows from the property that both It\^o and Stratonovich satisfy a.s. the additivity on intervals like the standard Lebesgue integration. Concerning the multiplicative property \eqref{eq:charachter_truncated_rough_in} this is equivalent to show for any $i,j\in\{1\,,\cdots \,, d\}$
\[\langle\mathbb{B}^I_{st} , i\rangle\langle\mathbb{B}^I_{st} , j\rangle= \langle\mathbb{B}^I_{st} , i\diamond j\rangle\,,\quad  \langle\mathbb{B}^S_{st} , i\rangle\langle\mathbb{B}^S_{st} , j\rangle= \langle\mathbb{B}^S_{st} , i\shuffle j\rangle\,,\]
which hold a.s. true because of the integration by parts identity in It\^o and Stratonovich calculus (see e.g. \cite{revuz2004continuous}). To prove the  the a.s. H\"older regularity of $\mathbb{B}^I$ and $\mathbb{B}^S$, we can easily check the hypothesis of Theorem \ref{Holder_criterion} on their components. Using the standard hypercontractivity estimates on It\^o iterated integrals (see e.g. \cite[Thm 2.7.2]{nourdin_peccati_2012}), for any $p\geq 1$ there exists a constant $C_p>0$ such that
\begin{equation}\label{eq_basis_induction}
\bE|\langle\mathbb{B}^I_{st} , i\rangle|^{2p}\leq C_p|t-s|^p\,,\quad \bE|\langle\mathbb{B}^I_{st} , ij\rangle|^{2p}\leq C_p (\bE|\langle\mathbb{B}^I_{st} , ij\rangle|^2)^{p}=  C_p2^{-p}|t-s|^{2p}\,.
\end{equation}
Therefore we satisfy the hypothesis of  Theorem \ref{Holder_criterion} with $N=2$ and we conclude that $\mathbb{B}^I$ is a quasi-geometric rough path. Since $\langle\mathbb{B}^S_{st} , ij\rangle$ is a linear combination of $\langle\mathbb{B}^I_{st} , ij\rangle$ and $(t-s)$, we conclude.
\end{proof}
A natural way to restate the results contained in \cite{gaines94} is then to link the processes $I$ and $S$ introduced in \eqref{def_I_and_S} with the rough paths $\mathbb{B}^I$ and $\mathbb{B}^S$. Since $\mathbb{B}^I$ and $\mathbb{B}^S$ are defined on $[0,T]^2$ and $I$ and $S$ are naturally defined on the $2$-simplex we can easily extend them by introducing the two parameters stochastic processes  $I, S\colon [0,T]^2\to (T(A^{\circ}))^*$ (we will adopt the same notation) defined as $\mathbb{B}^I$ and $\mathbb{B}^S$ on $\bR^{A^{\circ}}$ and given recursively as
\begin{equation}\label{def_higher_Brownian_rough}
\begin{gathered}
\langle I_{st} , w\circ\rangle= \int_s^t \langle I_{sr} , w\rangle dr\,, \quad \langle S_{st} , w\circ\rangle= \int_s^t \langle S_{sr} , w\rangle dr\,, \\
\langle I_{st} , wi\rangle= \int_s^t \langle I_{sr} , w\rangle dB^i_{r}\,, \quad \langle S_{st} , wi\rangle= \int_s^t \langle S_{sr} , w\rangle  d^{\circ}B^i_{r}\,. 
\end{gathered}
\end{equation}
Clearly  $I$ and $S$ extend the processes $I$ and $S$ introduced in \eqref{def_I_and_S} and the processes $\mathbb{B}^I$ and $\mathbb{B}^S$ over a wider set of words. This second extension can be restated in terms of rough paths too.
\begin{thm}\label{thm_lyons_ext_rough}
The functions $I$ and $S$ defined by \eqref{def_higher_Brownian_rough} coincide a.s. with  the Lyons extension of the rough paths  $\mathbb{B}^I$ and $\mathbb{B}^S$.
\end{thm}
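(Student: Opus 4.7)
The plan is to invoke the uniqueness part of Proposition \ref{ext_lyo_quasi} (combined with Remark \ref{Holder_criterion_infinite} to pass from the truncated to the full algebra). It therefore suffices to verify that the processes $I$ and $S$ defined by \eqref{def_higher_Brownian_rough} are a.s. well-defined as functions $[0,T]^2 \to (\hat{T}(A^\circ))^*$ and $[0,T]^2 \to (T(A^\circ))^*$, that they extend $\mathbb{B}^I$ and $\mathbb{B}^S$ on $T^2_{|\cdot|_\diamond}(A^\circ)$, that they satisfy Chen's identity \eqref{eq:chen}, the character property with respect to $\diamond$ for $I$ and $\shuffle$ for $S$, and the bound \eqref{eq:genrpbound_in} with respect to the intrinsic weight $|\cdot|_\diamond$. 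The extension property is immediate, and Chen's identity follows by induction on word length: writing $w = w'a$ and splitting the defining integral as $\int_s^t = \int_s^u + \int_u^t$, the inductive hypothesis combined with the shape of the deconcatenation coproduct $\nabla$ (and the fact that the convolution product $*$ is dual to $\nabla$) reproduces $\langle I_{st}, w\rangle = \langle I_{su}*I_{ut}, w\rangle$; the Stratonovich case is identical.

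The main algebraic content is the character property. For $S$, the classical Stratonovich integration by parts applied to $\langle S_{st}, w'a\rangle \langle S_{st}, v'b\rangle$ gives
\[
\langle S_{st}, w\rangle \langle S_{st}, v\rangle = \int_s^t \langle S_{sr}, w\rangle \langle S_{sr}, v'\rangle\, d^\circ B^b_r + \int_s^t \langle S_{sr}, w'\rangle \langle S_{sr}, v\rangle\, d^\circ B^a_r,
\]
and the inductive hypothesis on $|w|+|v|$ together with \eqref{recursive_shuffle} identifies the right-hand side with $\langle S_{st}, w \shuffle v\rangle$. For $I$ the same reasoning, now based on Itô's product formula, produces an additional covariation term
\[
\int_s^t \langle I_{sr}, w'\rangle \langle I_{sr}, v'\rangle\, d[B^a, B^b]_r = \mathbbm{1}_{\{1,\dots,d\}}(a)\delta_{ab}\int_s^t \langle I_{sr}, w' \diamond v'\rangle\, dr = \langle I_{st}, (w' \diamond v')[a,b]_\diamond\rangle,
\]
which is precisely the third summand in the quasi-shuffle recursion \eqref{quasi_shuffle_recursive} with bracket $[\,,]_\diamond$. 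Hence by induction $\langle I_{st}, w\rangle\langle I_{st}, v\rangle = \langle I_{st}, w \diamond v\rangle$ a.s., which is exactly the character property for $\diamond$.

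For the Hölder bound, hypercontractivity on Wiener chaos iterated from \eqref{eq_basis_induction} yields $\mathbb{E}|\langle I_{st}, w\rangle|^{2p} \leq C_{w,p}|t-s|^{p\,|w|_\diamond}$ for every $w$ and every $p \geq 1$; the analogous bound for $S$ is obtained from the Stratonovich--Itô correction, which expresses iterated Stratonovich integrals as finite linear combinations of iterated Itô integrals against $dB$'s and $dr$'s, each of which already satisfies the sharp moment bound. Remark \ref{Holder_criterion_infinite} then upgrades this to the a.s. Hölder property \eqref{eq:genrpbound_in} on every word, for any $\gamma \in (1/3, 1/2)$. Once all four properties are in place, the uniqueness statements in Proposition \ref{ext_lyo_quasi} and Remark \ref{Holder_criterion_infinite} force $I$ and $S$ to coincide a.s. with the Lyons extensions of $\mathbb{B}^I$ and $\mathbb{B}^S$. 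The main obstacle is the character step: conceptually it is nothing but Itô's product rule, but one must rigorously align the analytic covariation produced by Itô calculus with the combinatorial bracket $[\,,]_\diamond$ driving the quasi-shuffle recursion.
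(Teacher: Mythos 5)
Your proposal is correct and follows the same overall strategy as the paper: verify that $I$ and $S$ extend $\mathbb{B}^I$ and $\mathbb{B}^S$, satisfy Chen's identity, the character property (for $\diamond$ and $\shuffle$ respectively), and the moment bounds feeding into Theorem \ref{Holder_criterion} / Remark \ref{Holder_criterion_infinite}, then conclude by uniqueness of the Lyons extension. The local differences are worth noting. For the character property the paper simply cites \cite[Prop.~2.2, Prop.~2.3]{gaines94}, whereas you re-derive it by induction from the It\^o/Stratonovich product rules, matching the covariation term with the bracket $[\,,]_\diamond$ in the quasi-shuffle recursion \eqref{quasi_shuffle_recursive}; that is essentially Gaines' own argument, so this is a self-contained but equivalent route. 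For the H\"older bounds you use hypercontractivity on a fixed Wiener chaos plus an $L^2$ computation where the paper runs an explicit induction with Jensen/Fubini and BDG (equations \eqref{proof_first_Holder}--\eqref{proof_third_Holder}); both work, though for the Stratonovich side your appeal to "the Stratonovich--It\^o correction expressing iterated Stratonovich integrals as combinations of It\^o ones" should be phrased as the one-step semimartingale correction iterated inductively, since the full correction formula is precisely Corollary \ref{first_main_thm}, which is downstream of this theorem.

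The one point you omit is the character property for $t<s$. Your integration-by-parts induction (like Gaines' propositions) only covers the ordered case $s\le t$, while Definition \ref{def_inhom_rough_path} requires \eqref{eq:charachter_truncated_rough_in} for all $s,t\in[0,T]^2$. The paper closes this by observing that Chen's identity forces $I_{uv}=I_{vu}\circ\cA_\diamond$ and $S_{uv}=S_{vu}\circ\cA_\shuffle$ for $v<u$, and that the antipodes are algebra homomorphisms, so the character property propagates to reversed time pairs. You should add this step (or an equivalent treatment of backward integrals) to make the argument complete.
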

\begin{proof}
By construction $I$ and $S$ extend respectively $\mathbb{B}^I$ and $\mathbb{B}^S$. Thus the result hold if we are able to prove that $I$ and $S$ satisfy a.s. the properties of Definition \ref{def_inhom_rough_path} over all $\hat{T}(A)$ and $T(A)$. First of all, Chen's relations are automatically satisfied because, as recalled before, both It\^o and Strotonovich integrals are additive on on intervals. Secondly, we deduce immediately from \cite[Prop. 2.2, Prop 2.3]{gaines94}, which shows the properties \eqref{charachter_brownian}, that one has a.s. the property  \eqref{eq:charachter_truncated_rough_in} on $I_{st}$ and $S_{st}$ for any $s<t$ (it is sufficient to repeat the same proof starting from $s$ and not $0$). Chen's property and the character property imply  that for any $v<u$ $I_{uv}$ and $S_{uv}$ are a.s. the inverse of characters $I_{vu}$, $S_{vu} $ in the group of characters. Thus we have the a.s. identity
\begin{equation}\label{id_inverse_antipode}
S_{uv}= S_{vu}\cA_{\shuffle}\,, \quad I_{uv}= I_{vu}\cA_{\diamond}\,, 
\end{equation}
where $\cA_{\shuffle}$ and $\cA_{\diamond}$ are respectively the antipode maps of the shuffle and quasi-shuffle Hopf algebra. Since $\cA_{\shuffle}$ and $\cA_{\diamond}$ are both algebra homomorphisms we conclude that $I$ and $S$ satisfy  \eqref{eq:charachter_truncated_rough_in} for any $s,t\in [0,T]^2$. The final H\"older relations are satisfied using Theorem \ref{Holder_criterion} in case of a countable family of estimates (as explained in  Remark \ref{Holder_criterion_infinite}) by showing that for any $w\in W(A^{\circ})$ and $p\geq 1$ there exists two constants $C_p(w), C'_p(w)>0 $ depending on $p$ and $w$ such that one has for any $s,t\in [0,T]^2$
\begin{equation}\label{final_estimates}
\bE |\langle I_{st} , w\rangle|^{2p}\leq C_p(w)|t-s|^{|w|_{\diamond}p}\,,\quad \bE|\langle S_{st} , w\rangle|^{2p}\leq C'_p(w)|t-s|^{|w|_{\diamond}p}
\end{equation}
We prove these estimate by induction on the word length. The basis of induction is given trivially by essentially by the estimates in \eqref{eq_basis_induction} and the definitions of $\bB^I$ and $\bB^S$. Supposing these estimate valid on every word $w$ of length $n $ and we will prove \eqref{final_estimates} on $wa$ for some $a\in A^{\circ}$. In case $a= \circ$ we apply the recursive definition in  \eqref{def_higher_Brownian_rough} together with the classical Jensen inequality and Fubini Theorem  to obtain
\[\begin{split}\bE |\langle I_{st} , w\circ\rangle|^{2p}&=  \bE\left|\int_s^t \langle I_{sr} , w\rangle dr\right|^{2p} \leq |t-s|^{2p-1} \int_s^t \bE|\langle I_{sr} , w\rangle|^{2p} dr
\end{split}\]
Applying the induction hypothesis and performing the elementary integration, we conclude that there exists a new constant $M>0$ such that
\begin{equation}\label{proof_first_Holder}
\bE |\langle I_{st} , w\circ\rangle|^{2p}\leq M |t-s|^{2p-1}  |t-s|^{|w|_{\diamond}p+1}=  M  |t-s|^{|w\circ|_{\diamond}p}\,. 
\end{equation}
Same reasoning for the quantity $\bE |\langle S_{st} , w\circ\rangle|^{2p}$. In case $a=i\in\{1,\cdots , d\}$ we apply the standard BDG inequality, see \cite{revuz2004continuous},  to $\langle I_{st} , wi\rangle$ obtaining that there exists a constant  $C_p>0$ depending on $p$ such that
\[\bE |\langle I_{st} , wi\rangle|^{2p}= \bE\left|\int_s^t \langle I_{sr} , w\rangle dB^i_r\right|^{2p}\leq  C_p \left|\int_s^t \bE\left|\langle I_{sr} , w\rangle\right|^2 dr \right|^{p}\,.\]
We apply again the induction hypothesis, concluding that there exists a new constant $M'>0$ such that
\begin{equation}\label{proof_second_Holder}
\bE |\langle I_{st} , wi\rangle|^{2p}\leq M'   |t-s|^{p(|w|_{\diamond}+1)}=  M  |t-s|^{p|wi|_{\diamond}}\,. 
\end{equation}
In case of $\bE |\langle S_{st} , wi\rangle|^{2p}$, we apply the standard It\^o-Stratonovich correction for semimartingales and the convexity of $y\to |y|^{2p}$ obtaining
\[\begin{split}\bE |\langle S_{st} , wi\rangle|^{2p}&= \bE \left|\int_s^t \langle S_{sr} , w\rangle dB^i_r+ \frac{\delta_{w_ni}}{2} \int_s^t \langle S_{sr} , w_1\cdots w_{n-1}\rangle dr\right|^{2p}\\&\leq 2^{2p-1}\left(\bE \left|\int_s^t \langle S_{sr} , w\rangle dB^i_r\right|^{2p}+ \frac{\delta_{w_ni}}{2^{2p}} \bE \left| \langle S_{st} , w_1\cdots w_{n-1}\circ\rangle  \right|^{2p}\right)\,. \end{split}\]
Applying again the BDG inequality as before an the recursive hypothesis on $w$ and $w_1\cdots w_{n-1}\circ $ we obtain that there exists a constant $M''>0$ such that
\begin{equation}\label{proof_third_Holder}
\bE |\langle S_{st} , wi\rangle|^{2p}\leq M''  \left( |t-s|^{p(|w|_{\diamond}+1)}+ |t-s|^{p(|w_1\cdots w_{n-1} |_{\diamond}+2)}\right)=  2M''  |t-s|^{p|wi|_{\diamond}}\,. 
\end{equation}
Combining the estimates \eqref{proof_first_Holder}, \eqref{proof_second_Holder} and \eqref{proof_third_Holder} we complete the induction and we obtain the final estimates \eqref{final_estimates}, thereby yielding the result.
\end{proof}
An immediate consequence of this result is an alternative proof of the relations between the functions $I$ and $S$, originally stated in  \cite{Arous1989}.
\begin{cor}\label{first_main_thm}
One has the a.s. identities $\exp^*I= S$ and $\log^*S= I$, which becomes for any word $w\in W(A^{\circ})$
\begin{equation}\label{explicit_form_brow}
\begin{gathered}\langle S_{st} , w\rangle= \langle I_{st} , w\rangle+ \sum_{u\in [[w]]}\frac{1}{2^{|w|-|u|}}\langle I_{st} , u\rangle\,,\\ \langle I_{st} , w\rangle=  \langle S_{st} , w\rangle + \sum_{u\in [[w]]}\frac{(-1)^{|w|-|u|}}{2^{|w|-|u|}}\langle S_{st} , u\rangle\,, \end{gathered}
\end{equation}
where the set $[[w]]$ consists of the words we can construct from $w$ by successively replacing  with  $\circ$ any neighbouring pairs of letters of the form $ii$ for some $i \in \{1\cdots d\}$.
\end{cor}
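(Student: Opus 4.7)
The plan is to reduce the statement to an algebraic identity between the rough paths $\mathbb{B}^I$ and $\mathbb{B}^S$ at the truncated level, and then propagate it to all words via the compatibility of $\exp^*$ with Lyons' extension expressed in Corollary~\ref{cor_exp}. The second line of \eqref{explicit_form_brow} will then follow from the first by applying $\log^*$ and using that $\log^*\exp^* = \mathrm{id}$.

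First I would check the a.s.\ identity $\mathbb{B}^S = \exp^*\mathbb{B}^I$ on $T^2_{|\cdot|_{\diamond}}(A^{\circ})$. It suffices to test on the basis $\{\1, i, \circ, ij : i,j\in\{1,\dots,d\}\}$. On $\1$, $i$ and $\circ$ the map $\exp$ is the identity and the equality holds by definition \eqref{def_Brownian_rough}. On $ij$ one computes
\[
\exp(ij) \;=\; ij + \tfrac{1}{2}[i,j]_{\diamond} \;=\; ij + \tfrac{1}{2}\delta_{ij}\,\mathbbm{1}_{\{1,\dots,d\}}(i)\,\circ,
\]
so that $\langle \exp^*\mathbb{B}^I_{st}, ij\rangle = \langle \mathbb{B}^I_{st}, ij\rangle + \tfrac{1}{2}\delta_{ij}(t-s)$ (for $i,j\in\{1,\dots,d\}$), which is exactly the classical It\^o--Stratonovich correction and hence equals $\langle \mathbb{B}^S_{st}, ij\rangle$ a.s.

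Next, Theorem~\ref{thm_lyons_ext_rough} identifies $I$ and $S$ with the Lyons extensions of $\mathbb{B}^I$ and $\mathbb{B}^S$ respectively, and Corollary~\ref{cor_exp} tells us that these extensions intertwine $\exp^*$. Therefore the a.s.\ identity $\mathbb{B}^S = \exp^*\mathbb{B}^I$ lifts to the a.s.\ identity $S = \exp^* I$ on all of $\hat{T}(A^{\circ})$. Dualizing and using that $\log$ is the inverse of $\exp$ as a Hopf algebra isomorphism (Theorem~\ref{hoffmann_iso}) gives $I = \log^* S$ a.s.

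It remains to unpack these two identities into the combinatorial form \eqref{explicit_form_brow}. For a word $w=a_1\cdots a_n$ and a composition $I=(i_1,\dots,i_m)\in C(n)$, the bracket expression $[w]_I$ involves iterated brackets $[\cdot,\cdot]_\diamond$. Because $[a,b]_\diamond$ vanishes unless $a=b\in\{1,\dots,d\}$ (in which case it equals $\circ$) and $[\circ,\cdot]_\diamond=[\cdot,\circ]_\diamond=0$, any group of size $\ge 3$ contributes zero and a group of size $2$ contributes $\circ$ only when it consists of two equal letters from $\{1,\dots,d\}$. Thus the only surviving compositions are those whose parts are all $1$ or $2$, the size-$2$ parts occurring at positions where $w$ has a pair $ii$, $i\in\{1,\dots,d\}$. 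If $k=|w|-|u|$ denotes the number of such contractions, then $I!=2^k$, and $[w]_I$ is exactly the element $u$ of $[[w]]$ obtained by performing that particular sequence of replacements. Inserting this into $\exp(w)=\sum_I (1/I!)[w]_I$ yields the first line of \eqref{explicit_form_brow}; the analogous expansion of $\log$ via \eqref{defn_exp}, where each contraction additionally carries the sign $(-1)^{|w|-|u|}$, gives the second line.

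The main subtle point is simply the bookkeeping in the last step: one must verify that distinct admissible compositions produce distinct words of $[[w]]$ (so that no multiplicities are lost) and that the power of $2$ and the sign arising from $1/I!$ and $(-1)^{|w|-|I|}$ match the prefactors in \eqref{explicit_form_brow}. This is routine because an admissible $I$ is completely determined by the set of positions selected for pairwise contraction. All the analytic content has been pushed into the single verification at level $2$ and the two general statements (Theorem~\ref{thm_lyons_ext_rough} and Corollary~\ref{cor_exp}).
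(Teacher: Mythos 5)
Your proposal is correct and follows essentially the same route as the paper: verify $\exp^*\mathbb{B}^I=\mathbb{B}^S$ at the truncated level via the It\^o--Stratonovich correction, lift to all words using Theorem~\ref{thm_lyons_ext_rough} and Corollary~\ref{cor_exp}, and then unpack the definitions \eqref{defn_exp} of $\exp$ and $\log$ combinatorially. Your treatment of the surviving compositions (only parts of size $1$ or $2$, the latter matching neighbouring pairs $ii$, with $I!=2^k$) is in fact more explicit than the paper's one-line appeal to \eqref{prop_exp}.
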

\begin{proof}
Thanks to explicit definitions of $\exp$ and $\log$ in \eqref{explicit_exp_log} on two letters, together with the standard It\^o-Stratonovich correction for semimartingales, we have trivially $ \exp^*\mathbb{B}^I= \mathbb{B}^S$ and $\log^*\mathbb{B}^S= \mathbb{B}^I $. Therefore the desired identities are an immediate consequence of Corollary \ref{cor_exp} and Theorem \ref{thm_lyons_ext_rough}. The explicit formulae are then consequence of the general definition of $\exp$ and $\log$ in \eqref{prop_exp}.
\end{proof}

\begin{rk}
The first identitity in \eqref{explicit_form_brow} is a specific case of a general identity in stochastic analysis between iterated Stratonovich integral and iterated It\^o integrals of semimartingales, see \cite{Hu1988,Arous1989}. A direct reinterpretation of this general identity using  quasi-shuffle algebra has been carried out in \cite{kurusch15}.
\end{rk}
\subsubsection*{Standard theory of rough paths in case $\gamma\in (1/3, 1/2)$}
The second example we consider is a rewriting of the usual definition of rough paths when $\gamma\in (1/3, 1/2)$, as described in \cite{Friz2020course}.  In this particular case, most general definitions simplify drastically and we can define an elementary $\gga$-rough path as a couple of objects $X=(x, \bX)$, where $x\colon [0,T]\to \bR^d $, $x=(x^1\,, \cdots\,, x^d)$ is a $\gamma$-H\"older path  and $\bX\colon [0,T]^2\to \bR^d\otimes \bR^d$, $\bX=(\bX^{ij}\colon  i,j\in\{1\,,\cdots \,, d\})$ is a $2\gamma$-H\"older function satisfying for any $s,u,t\in [0,T]$ and $i,j\in\{1\,,\cdots \,, d\} $ the algebraic identity
\begin{equation}\label{eq_simple_chen}
\bX^{ij}_{st}= \bX^{ij}_{su}+ \bX^{ij}_{ut}+ (x^i_u-x^i_s) (x^j_{t}-x^j_u)\,.
\end{equation}
Even in this case we can describe every $\gamma$-rough path via a geometric and quasi-geometric structures like before. To introduce them, we fix an integer parameter $d\geq 1$ and we consider the alphabet
\begin{equation}\label{alphabet_ito_extension}
\bA_2^d= \{v\in \bN^{d}\setminus \{0\} \colon  \sum_{i=1}^d v_i\leq 2\}\,.
\end{equation}
Multi-indexes $v\in \bA_2^d$ have an intrinsic commutative operation of sum between them. We use this operation to define the following one $ [\,, ]_{2}\colon \bR^{\bA_2^d}\times \bR^{\bA_2^d}\to \bR^{\bA_2^d}$, given by
\begin{equation}
[\ga, \gb]_2: =\left\{
	\begin{array}{ll}
	\ga +\gb  & \mbox{if } \ga+\gb\in \bA_2^d\,, \\
		0 & \mbox{otherwise }
	\end{array}
	\right.
\end{equation}
and extended linearly. As before, we can check trivially that $[,]_{2}$ is a commutative bracket. We denote by $\qshuffle_2$ the associate quasi shuffle product on it. Secondly, we partition $\bA_2^d$ as
\[\bA_2^d= \{e_i\colon i\in\{1\,,\cdots \,, d\}\}\sqcup \{[e_ie_j]\colon  i,j\in\{1\,,\cdots \,, d\}\}= A_1\sqcup A_2\,,\]
where $e_i$ is $i$-th element of the canonical basis in $\bR^d$. Writing $\bR^{\bA_2^d}= \bR^{A_1}\oplus \bR^{A_2}$ we obtain again a decomposition  of $\bR^{\bA_2^d}$ where we can define a intrinsic weight, which we denote in this case by $|\cdot|_{2}$. Using the same notation with the canonical basis, it is straightforward to show that 
\[T^2_{|\cdot|_{2}}(\bA_2^d)= \langle \1, e_i, e_j+e_i, e_je_i \rangle\,,\quad  i,j\in\{1\,,\cdots \,, d\}\,. \]
The specific structure imposed by the hypothesis $\gamma\in (1/3,1/2)$ allows to extend every $\gamma$-rough path $X=(x, \bX)$, to a a geometric and a quasi-geometric rough path with the ``same algebraic properties" as $\bB^I$ and $\bB^S$.
\begin{prop}\label{prop_ito_and_strato_ext}
For any $\gamma\in (1/3, 1/2)$ and every elementary $\gga$-rough path $X=(x, \bX)$, we introduce the functions $X^I, X^{S}\colon [0,T]^2\to (T^2_{|\cdot|_{2}}(\bA_2^d))^*$  defined by the  conditions  
\begin{equation}\label{def_rough_Ito_and_strato}
\begin{gathered}
\langle X^I_{st} ,\1\rangle= \langle X^S_{st} , \1\rangle= 1, \quad \langle X^I_{st} , e_i\rangle= \langle X^S_{st} , e_i\rangle= x^i_t- x^i_s\,, \\
 \langle X^I_{st} , [e_ie_j]\rangle=\langle X^S_{st} , [e_ie_j]\rangle=(x^i_t-x^i_s) (x^j_{t}-x^j_s) - \bX^{ij}_{st}- \bX^{ji}_{st}\,, \\
\langle X^I_{st} , e_ie_j\rangle=\bX^{ij}_{st}\,, \quad \langle X^S_{st} , e_ie_j\rangle= \bX^{ij}_{st} -\frac{1}{2}\langle X^I_{st} , [e_ie_j]\rangle\,. 
\end{gathered}
\end{equation}
Then $X^{I}$ is a $\gamma$-quasi-geometric rough path  and $X^S$ is a $\gga$-weighted geometric rough path. We call them the It\^o and Stratonovich extension of $X$.
\end{prop}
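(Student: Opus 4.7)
The plan is to verify, for both $X^I$ and $X^S$, the three defining properties of a $\gamma$-weighted rough path listed in Definition \ref{def_inhom_rough_path}, taken respectively on the quasi-shuffle algebra $\hat{T}(\bA_2^d)$ and on the shuffle algebra $T(\bA_2^d)$. Since $\gamma\in(1/3,1/2)$ one has $N_\gamma=2$, so the whole verification reduces to checking a finite number of identities on the four types of basis elements of $T^2_{|\cdot|_2}(\bA_2^d)=\langle\1,e_i,[e_ie_j],e_ie_j\rangle$.

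First, I would verify Chen's identity \eqref{eq:chen}. On the weight-one letters $e_i$ it is the telescoping $x^i_t-x^i_s=(x^i_u-x^i_s)+(x^i_t-x^i_u)$, while on the length-two word $e_ie_j$, whose reduced coproduct is $e_i\otimes e_j$, it is precisely the algebraic identity \eqref{eq_simple_chen} built into the notion of elementary $\gamma$-rough path. The non-trivial case is the weight-two \emph{primitive} letter $[e_ie_j]$: I would expand $(x^i_t-x^i_s)(x^j_t-x^j_s)$ around an intermediate time $u$ and observe that the two cross products that appear cancel exactly against the correction terms $(x^i_u-x^i_s)(x^j_t-x^j_u)$ and $(x^j_u-x^j_s)(x^i_t-x^i_u)$ produced by \eqref{eq_simple_chen} applied to $\bX^{ij}$ and $\bX^{ji}$, leaving the desired additivity.

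Next, I would verify the character identity \eqref{eq:charachter_truncated_rough_in}. In the truncated space the only non-trivial product of nonzero-degree elements is the product of two weight-one letters. For $X^I$, the quasi-shuffle recursion \eqref{quasi_shuffle_recursive} and the bracket convention give $e_i\qshuffle_2 e_j = e_ie_j+e_je_i+[e_ie_j]$; substituting the definitions of the three terms the right-hand side telescopes immediately to $(x^i_t-x^i_s)(x^j_t-x^j_s)=\langle X^I_{st},e_i\rangle\langle X^I_{st},e_j\rangle$. For $X^S$ one has the ordinary shuffle $e_i\shuffle e_j=e_ie_j+e_je_i$, and since $[e_ie_j]=[e_je_i]$ in $\bA_2^d$, the formula relating $\langle X^S_{st},e_ie_j\rangle$ to $\bX^{ij}_{st}$ and $\langle X^I_{st},[e_ie_j]\rangle$ is precisely calibrated so that the symmetric sum reduces to $(x^i_t-x^i_s)(x^j_t-x^j_s)$.

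Finally, the Hölder bounds \eqref{eq:genrpbound_in} demand exponents $\gamma,2\gamma,2\gamma$ on $e_i$, $[e_ie_j]$, $e_ie_j$ respectively. The first is the $\gamma$-Hölder regularity of $x$, the third is the $2\gamma$-Hölder assumption on $\bX$, and the second is obtained from the defining formula for $\langle X^I_{st},[e_ie_j]\rangle$ by the triangle inequality using the previous two. The analogous bounds for $X^S$ transfer immediately, since each of its coordinates is a fixed linear combination of $\langle X^I,\cdot\rangle$-coordinates of the same weight. No step here poses a real difficulty; the only mild piece of bookkeeping is in the character verification, where one must carefully track the symmetry $[e_ie_j]=[e_je_i]$ when summing contributions from the two orderings in the shuffle or quasi-shuffle product.
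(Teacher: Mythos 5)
Your overall strategy is exactly the paper's: reduce everything to the four basis elements of $T^2_{|\cdot|_2}(\bA_2^d)$, verify the character property on the single nontrivial product $e_i\cdot e_j$ (quasi-shuffle for $X^I$, shuffle for $X^S$), observe that the only nontrivial Chen identity is the one for the primitive letter $[e_ie_j]$ and derive it from \eqref{eq_simple_chen}, and read off the H\"older bounds from the hypotheses on $(x,\bX)$. You supply considerably more detail than the paper's proof, and the computations you describe for $X^I$ (the cancellation of cross terms in the Chen identity for $[e_ie_j]$, and the telescoping of $e_ie_j+e_je_i+[e_ie_j]$ in the quasi-shuffle check) are correct.

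There is, however, one point where you assert rather than compute, and it is precisely the point where the verification breaks down as stated. You claim that the formula $\langle X^S_{st},e_ie_j\rangle=\bX^{ij}_{st}-\tfrac12\langle X^I_{st},[e_ie_j]\rangle$ is ``precisely calibrated'' so that $\langle X^S_{st},e_ie_j\rangle+\langle X^S_{st},e_je_i\rangle=(x^i_t-x^i_s)(x^j_t-x^j_s)$. Carrying out the computation with the sign as written in \eqref{def_rough_Ito_and_strato} gives
\[
\bX^{ij}_{st}+\bX^{ji}_{st}-\langle X^I_{st},[e_ie_j]\rangle
=2\bigl(\bX^{ij}_{st}+\bX^{ji}_{st}\bigr)-(x^i_t-x^i_s)(x^j_t-x^j_s)\,,
\]
which equals the required product only when $X$ is already geometric. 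The shuffle identity holds if and only if the correction enters with the opposite sign, i.e. $\langle X^S_{st},e_ie_j\rangle=\bX^{ij}_{st}+\tfrac12\langle X^I_{st},[e_ie_j]\rangle$, which is also what the Brownian analogy (Stratonovich $=$ It\^o $+$ half the bracket) dictates. This is almost certainly a sign typo in the statement itself, and the paper's own proof glosses over it with the same ``we can easily show'' that you use; but a blind verification should have surfaced it, and as it stands your argument for the character property of $X^S$ does not go through for the object literally defined in \eqref{def_rough_Ito_and_strato}. A minor secondary remark: for $X^S$ you should also note that Chen's identity on the word $e_ie_j$ follows from \eqref{eq_simple_chen} together with the additivity of $\langle X^I,[e_ie_j]\rangle$, since $X^S$ differs from $X^I$ on that word by a multiple of the bracket component.
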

\begin{proof}
By hypothesis on $X$, the functions $X^{I}$ and $X^{S}$ are well-defined and they have the right H\"older regularities. Concerning the multiplicative property, we can easily show the identities
 \[ \langle X^I_{st} , e_i\qshuffle_2 e_j\rangle= \langle X^I_{st} , e_i\rangle\langle X^I_{st} ,  e_j\rangle\,, \quad \langle X^S_{st} , e_i\shuffle e_j\rangle= \langle X^S_{st} , e_i\rangle\langle X^S_{st} ,  e_j\rangle\,,\]
by rearranging the properties defining $X^{I}$ and $X^{S}$. The last property to check are Chen's relations and the only non trivial identity to prove is the equality
\begin{equation}\label{addtive_bracket}
\langle X^I_{st} , e_i+ e_j\rangle= \langle X^I_{su} , [e_ie_j]\rangle+ \langle X^I_{ut} , e_i+ e_j\rangle\,,
\end{equation}
for any $s,u,t\in [0,T]$ and $i,j\in\{1\,,\cdots \,, d\}$.  But this last identity follows as an elementary consequence of  \eqref{eq_simple_chen}.
\end{proof}
\begin{rk}
It is straightforward to see that if we consider a  a $d$ dimensional Brownian motion $B$ endowed By the family of its Lévy areas $\bB^{ij}= \int_{s}^t (B^i_r-B^i_s)dB^j_s  $, the  constructions of  $X^I$ and $X^S$ coincide essentially  with the definition of $\bB^I$ and $\bB^S$ in \eqref{def_Brownian_rough}.
\end{rk}
\begin{rk}
A direct consequence of the identity \eqref{addtive_bracket} implies that there exist a  unique family of paths $\{[X]^{ij}\}_{i,j\in \{1, \cdots, d\}}$ such that  $[X]^{ij}_0=0$ and $\langle X^I_{st} , e_i+ e_j\rangle= [X]^{ij}_t- [X]^{ij}_s$ for any $i,j\in \{1, \cdots, d\}$. This family of paths is known  in the literature as the \emph{bracket of a rough path $X$} (see \cite[Defn 5.5]{Friz2020course}) and it provides an analytic substitute of the notion of quadratic variation in the rough path setting. We remark also that if a semimartingale  $y=(y^1\,, \cdots\,, y^d)$ and its iterated It\^o integrals $\bY^{ij}= \int_{s}^t (y^i_r-y^i_s)dy^j_s $ are a.s.  a $\gamma$-rough path $Y=(y, \bY)$, the bracket of $Y$ is exactly the quadratic variation of $y$.
\end{rk}

\section{Rough change of variable formula}\label{second_section}
We consider now the main application of quasi-geometric rough paths: the deterministic change of variable formula for branched rough paths.  In what follows, we fix two parameters $\gamma\in (0, 1)$, $d\geq 1$ integer and we  suppose given a  $\gga$-H\"older path  $x\colon [0,T]\to \bR^{d}$, $x=(x^1,\cdots, x^d)$  such that there exists a $\gamma$-branched rough path $\mathbf{X}\colon [0,T]^2\to \cH(\{1\,,\cdots\,, d\})$  over $x$. By construction of $\X$, the following path  $ X\colon [0,T]\to (\cH(\{1\,,\cdots\,, d\}))^{d}$, $X=\{X^{i}\colon [0,T]\to \cH(\{1\,,\cdots\,, d\})\}_{i\in\{1\,, \cdots,d\}}$ defined by 
\begin{equation}\label{coordinate2}
\langle \gs,X^i_t\rangle= \begin{cases} x_t^i & \text{if}\;\gs=\1^*\,,\\
1 & \text{if  $\gs=\bullet_i$,}\\ 0 & \text{for any other $\gs\in \cF(A)$} \,,
\end{cases}
\end{equation} 
satisfies immediately the properties of controlled rough path, obtaining $X\in \cD^{N\gamma}_{\mathbf{X}}$ for any $N\geq 1$. Since  we can apply the operations of composition and rough integration on the controlled path $X$, we ask ourselves if the branched rough integral satisfies an identity similar to the standard change of variable formula \eqref{classic_change}, meaning that for any sufficiently smooth function $\gp\colon \bR^d\to \bR$ one has the identity
\begin{equation}\label{first_eq_chg}
\gp(x_t)- \gp(x_s)= \sum_{i=1}^d\int_s^t \partial_{i}\Phi(X_r)d^b\X^i_r\,,
\end{equation}
where $\partial_i\Phi(X_r)$ is the controlled rough path obtained by composition of $X$ in \eqref{coordinate2} with $\partial_{x_i}\gp$ as defined in Proposition \eqref{composition}. In case $\gamma>1/2$, we deduce from the definition \eqref{defn_rough_int_br} that for any $i\in A$ one has
\[\int_0^t \partial_i\Phi(X_r)d^b\X^i_r= \lim_{|\pi|\to 0}\sum_{[s,u]\in\pi}\,\langle \1^*,\partial_i\Phi(X_s)\rangle \langle \X_{su},\bullet_i \rangle= \lim_{|\pi|\to 0}\sum_{[s,u]\in\pi}\,\partial_{x_i}\gp(x_s)(x^i_u-x^i_s)\]
Then the branched rough integral coincide with the  Young integral between  $\partial_{x_i}\gp(x)$ and $x^i$ (see \cite{Young1936}) and the identity \eqref{first_eq_chg} holds from  standard results on Young integration (see \cite{Friz2020course} for further details). However, in the general case $\gamma\leq 1/2$  formula \eqref{first_eq_chg} might not hold in general. The general problem of the branched change of variable formula consists in finding some suitable hypothesis to correct \eqref{first_eq_chg} with the increment of a function, that is to prove the existence of a  function $R^{\gamma,d, \gp}\colon [0,T]\to \bR$ such that  one has 
\begin{equation}\label{second_eq_chg}
\gp(x_t)- \gp(x_s)= \sum_{i=1}^d\int_s^t \partial_i\Phi(X_r)d^b\X^i_r +R^{\gamma,d, \gp}_t- R^{\gamma,d, \gp}_s\,.
\end{equation}
This problem has been deeply studied in the Phd thesis by David Kelly \cite{kelly2012ito} via the key notion of \emph{simple bracket extension}. In what follows, we recall the main ideas behind this notion and we will show how quasi-geometric rough paths provide an efficient way to construct effective examples of simple bracket extensions.

\subsection{Kelly's change of variable formula}
In order to write down a formula like  \eqref{second_eq_chg}, we need to rewrite  the expansion of the branched rough integral, as  explained in the \eqref{equ:rough_int_est} combined  with the Taylor expansion of $\gp(x)$, so that it is possible to replace them with the increment of a function $R^{\gamma,d, \gp}$ up to an order $o(|t-s|)$. The main idea contained in \cite{kelly2012ito} is that  we can choose the function  $R^{\gamma,d, \gp}$ as a sum of rough integrals, provided  there exists a specific branched rough path  $\widehat{\X}$ defined on a wider alphabet  containing $\{1,\cdots,d\}$ and extending $\X$. Let us introduce the basic definitions to define properly this underlying alphabet.
\begin{defn}
For any $\gga\in (0,1)$ and $d\geq 1$ integer we define the  alphabet $\cA_{\gamma}^d$ as
\[\cA^d_{\gamma}=\bigcup_{n=1}^{N_{\gamma}}\{1, \cdots, d\}^n\,,\]
where $N_{\gamma} $ is defined as before. The elements belonging to $ \{1,\cdots, d\}^{n}$ with $n\geq 2$ are denoted as $(i_1\cdots i_n)$, where $i_1,\cdots, i_n\in \{1,\cdots, d\}$ and the elements $i \in \{1,\cdots, d\}$ are denoted by $i$.
\end{defn}
We trivially remark that $\{1,\cdots, d\} $ is embedded in $\cA_{\gamma}^d$  for any $\gga\in(0,1)$ and we have  an equality if $1/2<\gga<1$. Starting from  this alphabet we  consider $\cH(\cA^d_{\gamma})$, the Butcher-Connes-Kreimer Hopf algebra built from $\cA_{\gamma}^d$, which  contains $\cH(\{1,\cdots, d\})$ as a sub  Hopf algebra. The intrinsic structure of $\cA_{\gamma}^d$ allows to define a function $|\cdot|_{\gamma}\colon \cF(\cA_{\gamma}^d)\to \bN $ given recursively by the identities
\[ |B_{(i_1\cdots i_{n})}^+(\t_1\cdots \tau_k)|_{\gamma}= n+ |\t_1\cdots \tau_k|_{\gamma}\,,\quad |\t_1\cdots \tau_k|_{\gamma}= \sum_{j=1}^k|\t_j|_{\gamma}\,, \quad |\1|_{\gamma}=0\,. \]
We can easily show that  $|\cdot|_{\gamma}$ is a compatible weight for $\cH(\cA_{\gamma}^d)$ endowed with the forest cardinality $|\cdot|$. We call it the \emph{intrinsic weight of $\cH(\cA_{\gamma}^d)$}. By construction of the intrinsic weight, for any $n\leq N_{\gamma}$ the grafting maps $B^+_{(i_1\cdots i_{n})} $ are integration maps of order $n$ for $|\cdot|$. The alphabet  $\cA_{\gamma}^d$ has also   a natural notion of symmetry among his elements.
\begin{defn}
Two elements of $a,b\in \cA_{\gamma}^d$, $a=(i_1\cdots i_n)$, $b=(j_1\cdots j_m)$ are said to be \emph{symmetrical} if $m=n$ and there exists a permutation $\gs\in S_{n}$ such that $a_{\gs}= (i_{\gs(1)}\cdots i_{\gs(n)})=b$. We denote it by the symbol $a\sim b$.
\end{defn}
It is straightforward to check that $\sim$ is an equivalence relation over the set $\cA_{\gamma}^d$. Moreover we can easily extend it to an equivalence relation on the set of forests $ \cF(\cA_{\gamma}^d)$ and we denote it by the same notation.
Using the letters of the alphabet $\cA_{\gamma}^d$, we can now define a specific linear combination of forests: the \emph{bracket polynomials}.
\begin{defn}
Let $\gga\in (0,1)$ and  $(i_1 \cdots i_n)\in \cA_{\gamma}^d$. We define \emph{the bracket polynomial} $\llangle i_1 i_2\cdots i_n\rrangle\in \cH(\cA_{\gamma}^d)$ as
\begin{equation}\label{bracket_pol}
\llangle i_1 i_2\cdots i_n\rrangle = \bullet_{i_1} \ldots \bullet_{i_n}- \sum_{ \{a,b\}=\{i_1\,,\cdots, i_n\}}B_{(b_1\cdots b_{k})}^+(\bullet_{a_1} \ldots \bullet_{a_{n-k}})\,,
\end{equation}
where the sum is done over all ways of splitting the set $\{i_1\,,\cdots, i_n\}=\{a,b\}$ into two non-empty sets $a=\{a_1,\cdots,a_{n-k}\}$ and  $ b=\{b_1,\cdots, b_{k}\}$ for every $k\geq 1$.
\end{defn}
By definition  $ \llangle i \rrangle= \bullet_{i}$ for any $i\in \{1, \cdots, d\}$. Thus we can expect that the bracket polynomial $\llangle  i_1 i_2\cdots i_n\rrangle$ is related with the tree $\bullet_{ (i_1 i_2\cdots i_n)}$. Let us recall the two main properties of the bracket polynomial. Both  results as well as their proofs are proven in several parts of \cite[Chap. 5]{kelly2012ito} and we decide to encode them in a unique proposition.
\begin{prop}
Let $\gga\in (0,1)$, $d\geq 1$ integer and  $(i_1 \cdots i_n)\in \cA_{\gamma}^d$. One has  the following properties:
\begin{enumerate}[label=\alph*\emph{)}]
 \item For all $a \in \cA_{\gamma}^d $ such that $a\sim (i_1 \cdots i_n)$ one has $\llangle a\rrangle= \llangle i_1 \cdots i_n\rrangle$.

\item Let  $\mathbf{B}^+\colon\cH(\{1,\cdots, d\})\otimes \cH(\{1,\cdots, d\}) \rightarrow \cH(\cA_{\gamma}^d)$ be the unique linear map defined for any couple of forests $\gs, \tau\in \cH(\{1,\cdots, d\})$ as
\[\mathbf{B}^+(\gs\otimes \t):= \begin{cases} B_{(i_1\cdots i_n)}^+(\gs)& \text{if $\t=\bullet_{i_1} \ldots \bullet_{i_n}$ for any $1 \leq n \leq N$}\\ 0 &\text{otherwise .}
\end{cases}\]
Then we have the identity 
\begin{equation}\label{defn_alternative_bracket}
\llangle i_1 \cdots i_n\rrangle :=\bullet_{i_1} \ldots \bullet_{i_n}- \mathbf{B}^+\gD'(\bullet_{i_1}\cdots \bullet_{i_n})\,,
\end{equation}
where $\gD'$ is the reduced coproduct \eqref{defn_coproduct} associated to the coproduct $\gD$ in  \eqref{defn_BCK_coproduct}.
\item Applying the coproduct $\gD\colon\cH(\cA_{\gamma}^d)\otimes \cH(\cA_{\gamma}^d) \rightarrow \cH(\cA_{\gamma}^d)$ to $\llangle i_1 i_2\cdots i_n\rrangle$, we have
\begin{equation}\label{delta_bracket}
\begin{split}\gD\llangle i_1 i_2\cdots i_n\rrangle&= \llangle i_1 i_2\cdots i_n\rrangle\otimes\1 +\1\otimes \llangle i_1 i_2\cdots i_n\rrangle \\&+ \sum_{ \{a,b\}=\{i_1\,,\cdots, i_n\}}\bullet_{a_1} \ldots \bullet_{a_{n-k}}\otimes(\llangle  b_1\cdots b_{k} \rrangle  - \bullet_{(b_1\cdots b_{k})})\,,\end{split}
\end{equation}
where the sum is done over the same set as in \eqref{bracket_pol}.
\end{enumerate}
\end{prop}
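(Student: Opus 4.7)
For part (a), the identity $\llangle a\rrangle=\llangle i_1\cdots i_n\rrangle$ for $a\sim(i_1\cdots i_n)$ is immediate from \eqref{bracket_pol}: the forest product $\bullet_{i_1}\cdots\bullet_{i_n}$ is commutative in $\cH(\cA^d_\gamma)$ and therefore depends only on the multiset $\{i_1,\ldots,i_n\}$; likewise the summation is indexed by \emph{unordered} bipartitions $\{a,b\}$ of this multiset, with each summand $B^+_{(b_1\cdots b_k)}(\bullet_{a_1}\cdots\bullet_{a_{n-k}})$ depending on the pair of sets, once one works modulo the equivalence $\sim$ on the root label $(b_1\cdots b_k)$.

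For part (b), the plan is to compute $\gD'(\bullet_{i_1}\cdots\bullet_{i_n})$ directly. Since $\gD$ is an algebra morphism and each single vertex $\bullet_{i_j}$ is primitive, a repeated expansion of the product gives
\[ \gD(\bullet_{i_1}\cdots\bullet_{i_n})=\sum_{S\subseteq\{1,\ldots,n\}}\bullet_S\otimes\bullet_{S^c},\qquad \bullet_S:=\prod_{j\in S}\bullet_{i_j}, \]
and the reduced coproduct is the same sum restricted to $\emptyset\neq S\subsetneq\{1,\ldots,n\}$. Applying $\mathbf{B}^+$ turns each term $\bullet_S\otimes\bullet_{S^c}$ into $B^+_{(S^c)}(\bullet_S)$; after identifying $a\leftrightarrow S$ and $b\leftrightarrow S^c$, this reproduces precisely the sum subtracted in \eqref{bracket_pol}, proving \eqref{defn_alternative_bracket}.

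For part (c), my plan is to apply $\gD$ to the identity of (b) and to regroup the resulting terms by the shape of the first tensor factor. Using the recursion \eqref{defn_BCK_coproduct} on each grafted tree yields
\[ \gD B^+_{(S^c)}(\bullet_S)=B^+_{(S^c)}(\bullet_S)\otimes\1+\sum_{T\subseteq S}\bullet_T\otimes B^+_{(S^c)}(\bullet_{S\setminus T}). \]
Contributions with first tensor factor $\1$ recombine into $\1\otimes\llangle i_1\cdots i_n\rrangle$ by invoking \eqref{bracket_pol} on the second factor; the top-level terms $-B^+_{(S^c)}(\bullet_S)\otimes\1$ combine with $\bullet_{i_1}\cdots\bullet_{i_n}\otimes\1$ from $\gD(\bullet_{i_1}\cdots\bullet_{i_n})$ to give $\llangle i_1\cdots i_n\rrangle\otimes\1$. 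For each non-trivial bipartition $\{A,A^c\}$ of $\{i_1,\ldots,i_n\}$, the remaining terms whose first factor equals $\bullet_A$ are reindexed through $R:=S\setminus A\subsetneq A^c$, collecting into
\[ \bullet_A\otimes\Bigl[\bullet_{A^c}-\sum_{R\subsetneq A^c}B^+_{(A^c\setminus R)}(\bullet_R)\Bigr]=\bullet_A\otimes\bigl(\llangle A^c\rrangle-\bullet_{(A^c)}\bigr), \]
where the last equality applies the definition of $\llangle A^c\rrangle$ while isolating the boundary contribution $R=\emptyset$, which yields $B^+_{(A^c)}(\1)=\bullet_{(A^c)}$.

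The main obstacle is this last reindexing in (c): the same subset $S$ contributes to several first-factor slots $\bullet_A$ through the internal choice $T\subseteq S$, so one has to parametrise the collected sums by pairs $(A,R)$ with $A=T$ and $R=S\setminus A$, carefully tracking the boundary cases $R=\emptyset$ (which generates the correction $-\bullet_{(A^c)}$) and $R=A^c$ (which is excluded because $S\subsetneq\{1,\ldots,n\}$). Once this bookkeeping is set up, the matching with $\llangle A^c\rrangle-\bullet_{(A^c)}$ is a direct consequence of the definition \eqref{bracket_pol} applied to the complementary index set.
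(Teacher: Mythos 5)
Your proposal is correct, and for parts (a) and (b) it coincides with the paper's argument: invariance of the two defining terms of \eqref{bracket_pol} under permutation of the indices, and the expansion of $\gD(\bullet_{i_1}\cdots\bullet_{i_n})$ over bipartitions followed by an application of $\mathbf{B}^+$. For part (c) you reach the same decomposition into $\llangle i_1\cdots i_n\rrangle\otimes\1$, $\1\otimes\llangle i_1\cdots i_n\rrangle$ and the bipartition terms, but where the paper recognises the subtracted sum as $(\text{id}\otimes\mathbf{B}^+)(\gD'\otimes\text{id})\gD'(\bullet_{i_1}\cdots\bullet_{i_n})$ and invokes coassociativity \eqref{eq:coassociative_reduced_coprod} to flip it to $(\text{id}\otimes\gD')$, you perform the equivalent step by hand, reindexing the pairs $(S,T)$ as $(A,R)$ with $A=T$ and $R=S\setminus A$; your bookkeeping of the boundary cases is right ($R=\emptyset$ produces $B^+_{(A^c)}(\1)=\bullet_{(A^c)}$, and $R=A^c$ is excluded since the root label must be non-empty), so the two arguments are the same computation at different levels of abstraction, yours being the more elementary and the paper's the more structural.
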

\begin{proof}
We will recall the proof of the three properties for sake of completeness. The proof of $a)$ follows from the fact that for any permutation $\gs\in S_n$ one has  $\llangle i_{\gs(1)}\cdots i_{\gs(n)}\rrangle=\llangle i_1\cdots i_n\rrangle$ because the two terms defining $\llangle i_1\cdots i_n\rrangle$ in  \eqref{bracket_pol} are the same  when we permute the indices with $\gs$. To prove $b)$, we can iterate the multiplicative property of $ \gD$ to obtain the formula
\[\gD(\bullet_{i_1}\cdots \bullet_{i_n})=\bullet_{i_1}\cdots \bullet_{i_n}\otimes \1+ \1\otimes\bullet_{i_1}\cdots \bullet_{i_n}+\sum_{ \{a,b\}=\{i_1\,,\cdots, i_n\}}\bullet_{a_1} \ldots \bullet_{a_{n-k}}\otimes \bullet_{b_1} \ldots \bullet_{b_k}\,,\]
which can be easily proved by induction. Thus by definition of  reduced coproduct in  \eqref{defn_coproduct} we obtain 
\begin{equation}\label{key_combinatorial}
\gD'(\bullet_{i_1}\cdots \bullet_{i_n})= \sum_{ \{a,b\}=\{i_1\,,\cdots, i_n\}}\bullet_{a_1} \ldots \bullet_{a_{n-k}}\otimes\bullet_{b_1} \ldots \bullet_{b_k}
\end{equation}
and using the operator $\mathbf{B}^+$ we obtain the identity \eqref{defn_alternative_bracket}. In order to conclude the proof with  the identity $c)$, We combine the definition of the reduced coproduct in \eqref{defn_coproduct} to obtain
\[
\begin{split}\gD\llangle i_1 i_2\cdots i_n\rrangle&= \llangle i_1 i_2\cdots i_n\rrangle\otimes\1 +\1\otimes \llangle i_1 i_2\cdots i_n\rrangle + \gD'\llangle i_1 i_2\cdots i_n\rrangle\,.
\end{split}
\]
Using the following identity  for any $b\in \cA_{\gamma}^d$ and $\gs\in \cF( \cA_{\gamma}^d)$
\[ \gD'(B^{+}_b(\gs))= (\text{id}\otimes B_{b}^+)\gD'\gs +  \gs\otimes \bullet_b\,, \] 
We combine the definition of $\mathbf{B}^+$ with \eqref{key_combinatorial} and the coassociativity of $\gD'$ in \eqref{eq:coassociative_reduced_coprod} to compute
\[\begin{split}
&\gD'\sum_{ \{a,b\}=\{i_1\,,\cdots, i_n\}}B_{(b_1\cdots b_{k})}^+(\bullet_{a_1} \ldots \bullet_{a_{n-k}})=\\&= \sum_{\{a,b\}=\{i_1\,,\cdots, i_n\}} (\text{id}\otimes B_{(b_1\cdots b_{k})}^+)\gD'(\bullet_{a_1} \ldots \bullet_{a_{n-k}})+ \bullet_{a_1} \ldots \bullet_{a_{n-k}}\otimes \,\bullet_{(b_1\cdots b_{k})}\\& =\sum_{\{a,b\}=\{i_1\,,\cdots, i_n\}} (\text{id}\otimes \mathbf{B}^+)(\gD'\otimes \text{id})(\bullet_{a_1} \ldots \bullet_{a_{n-k}}\otimes   \bullet_{b_1} \ldots \bullet_{b_k})+ \bullet_{a_1} \ldots \bullet_{a_{n-k}}\otimes\, \bullet_{(b_1\cdots b_{k})}\\& = (\text{id}\otimes\mathbf{B}^+)(\gD'\otimes \text{id})\gD'(\bullet_{i_1}\cdots \bullet_{i_n})+\sum_{\{a,b\}=\{i_1\,,\cdots, i_n\}} \bullet_{a_1} \ldots \bullet_{a_{n-k}}\otimes \,\bullet_{(b_1\cdots b_{k})}\\&= (\text{id}\otimes\mathbf{B}^+)(\text{id}\otimes \gD')\gD'(\bullet_{i_1}\cdots \bullet_{i_n})+\sum_{\{a,b\}=\{i_1\,,\cdots, i_n\}} \bullet_{a_1} \ldots \bullet_{a_{n-k}}\otimes\, \bullet_{(b_1\cdots b_{k})}\\&= \sum_{\{a,b\}=\{i_1\,,\cdots, i_n\}} \bullet_{a_1} \ldots \bullet_{a_{n-k}} \otimes\, \left( \mathbf{B}^+( \gD'(\bullet_{b_1} \ldots \bullet_{b_k})) + \bullet_{(b_1\cdots b_{k})}\right) \,.
\end{split}\]
Using again the combinatorial identity \eqref{key_combinatorial} and the alternative definition \eqref{defn_alternative_bracket} one has
\[\begin{split}
& \gD'\left(\bullet_{i_1} \ldots \bullet_{i_n}-\mathbf{B}^+\gD'(\bullet_{i_1}\cdots \bullet_{i_n})\right)=\\&=\sum_{ \{a,b\}=\{i_1\,,\cdots, i_n\}}\bullet_{a_1} \ldots \bullet_{a_{n-k}}\otimes \left(\bullet_{b_1} \ldots \bullet_{b_k}-\mathbf{B}^+( \gD'(\bullet_{b_1} \ldots \bullet_{b_k}))- \bullet_{(b_1\cdots b_{k})}\right)\\&=\sum_{ \{a,b\}=\{i_1\,,\cdots, i_n\}}\bullet_{a_1} \ldots \bullet_{a_{n-k}}\otimes \left(\llangle  b_1\cdots b_{k} \rrangle- \bullet_{(b_1\cdots b_{k})}\right)\,. \end{split}\]
Thereby obtaining the thesis.
\end{proof}

The identity \eqref{delta_bracket} has a very deep consequence. Indeed for any fixed $\gamma$-rough path $\widehat{\X}$ over $\cH(\cA_{\gamma}^d)$  and any triplet $s,u,t\in [0,T]$, the formula  \eqref{delta_bracket} together with the Chen's identity  of $\widehat{\X}$ implies
\[\begin{split}
\langle\widehat{\X}_{st},\llangle i_1 i_2\cdots i_n\rrangle\rangle&=\langle\widehat{\X}_{su},\llangle i_1 i_2\cdots i_n\rrangle\rangle+\langle\widehat{\X}_{ut},\llangle i_1 i_2\cdots i_n\rrangle\rangle\\&+ \sum_{ \{a,b\}=\{i_1\,,\cdots, i_n\}}\langle\widehat{\X}_{su},\bullet_{a_1} \ldots \bullet_{a_{n-k}} \rangle\langle\widehat{\X}_{ut},\llangle  b_1\cdots b_{k} \rrangle  - \bullet_{(b_1\cdots b_{k})}\rangle\,.
\end{split}\]
Therefore the function  $(s,t)\to \langle\widehat{\X}_{st},\llangle i_1 i_2\cdots i_n\rrangle\rangle$ may be reinterpreted as the increment of a path as long as $\langle\widehat{\X}_{st},\llangle  b_1\cdots b_{k} \rrangle \rangle= \langle\widehat{\X}_{st},\bullet_{(b_1\cdots b_{k})}\rangle$ for any letter $(b_1\cdots b_{k})$ with $k<n$. This observation is at the basis of the  notion of simple bracket extension. 

\begin{defn}\label{bracket}
Let $\gga\in (0,1)$ and $d\geq 1$ integer. Every  $\gga$-weighted rough path $\widehat{\X}$ over $\cH(\cA_{\gamma}^d)$ with respect to its intrinsic weight is said to be a \emph{simple bracket extension} if it satisfies the following properties:
\begin{enumerate}[label={\arabic*)}]
\item for all $i_1, \cdots, i_n \in \{1,\cdots, d\}$, $1 \leq n \leq N$, one has the identity 
\begin{equation}\label{Ibpf}
\langle\widehat{\X}_{st},\bullet_{(i_1\cdots i_n)}\rangle= \langle\widehat{\X}_{st},\llangle i_1 \cdots i_n\rrangle\rangle
\end{equation}
for any $s,t\in [0,T]$;
\item For any couple of forests $\gs,\t\in \cF(\cA_{\gamma}^d)$ such that $\t\sim \gs$ then $\langle\widehat{\X}_{st}, \t \rangle=  \langle\widehat{\X}_{st}, \gs \rangle$.
\end{enumerate}
If $\X$ is a branched rough path over $\cH(\{1,\cdots,d\})$ such that  $\langle\X_{st}, \tau\rangle =\langle\widehat{\X}_{st}, \tau \rangle $ for any $\tau \in \cH(\{1,\cdots,d\})\subset \cH(\cA_{\gamma}^d)$, we say that $\widehat{\X}$ is a simple bracket extension over $\X$.
\end{defn}
\begin{rk}
We immediately remark that this notion of simple bracket extension coincide with the usual notion of branched rough path when $\gamma>1/2$. An informal way to describe a simple bracket extension over a branched rough path $\X$ when $\gamma\leq 1/2$ is then to add a family of extra paths $\widehat{x}=\{\widehat{x}^{(i_1\cdots i_n)}\}_{(i_1\cdots i_n)\in \cA_{\gamma}^d} $ and a family of functions $\{\langle\widehat{\X}_{st},B_{(i_1\cdots i_n)}^+(\cdot)\rangle\}_{(i_1\cdots i_n)\in \cA_{\gamma}^d}$ satisfying
\begin{equation}\label{pol_Ibpf}
\langle \X_{st}, \bullet_{i_1} \ldots \bullet_{i_n}\rangle=\sum_{\{a,b\}=\{i_1\,,\cdots, i_n\}}\langle \widehat{\X}_{st},B_{(b_1\cdots b_{k})}^+(\bullet_{a_1} \ldots \bullet_{a_{n-k}})\rangle+ \widehat{x}^{(i_1\cdots i_n)}_t- \widehat{x}^{(i_1\cdots i_n)}_s\,,
\end{equation}
for any $i_1,\cdots, i_n\in \{1,\cdots,d\}$. In this way we impose partially an integration by part formula between the coordinates of $x$. By analogy with stochastic calculus, we call the paths $\hat{x}^{(i_1\cdots i_n)} $ associated to $\langle\widehat{\X}_{st},\bullet_{(i_1\cdots i_n)}\rangle$ the \emph{$(i_1\cdots i_n)$-variation} of $x$. The name simple bracket extension comes to distinguish it from the general notion of bracket extension developed in \cite{kelly2012ito}.
\end{rk}
This notion is then sufficient to get a general change of variable formula as explained in \eqref{second_eq_chg} for any choice of parameters $\gamma$ and $d$, which contains also the identity \eqref{first_eq_chg}.
\begin{thm}[Kelly's change of variable formula]\label{chg_thm}
Let $x\colon [0,T]\to \bR^{d}$ be a generic $\gga$-H\"older path  such that there exists a $\gamma$-branched rough path $\mathbf{X}$ over $x$ for any $\gamma\in (0,1)$ and $d$. For any $\gp\in C_b^{N_{\gamma}+1}(\bR^d,\bR)$ and any simple bracket extension $\widehat{\X}$ over $\X$ one has 
\begin{equation}\label{change_of_variable}
\begin{split}
&\gp(x_t)=\\&\gp(x_s)+ \sum_{i=1}^d\int_s^t \partial_i\gP(X_r) d^b\X^i_r+ \sum_{n=2}^{N_{\gamma}} \frac{1}{n!} \sum_{i_1,\cdots, i_n=1}^d\int_s^t\partial_{i_1}\cdots\partial_{i_n}\gP(X_r)d^b\widehat {\X}^{(i_1 \cdots i_n)}_r\,,
\end{split}
\end{equation}
where $\partial_{i_1}\cdots\partial_{i_n}\Phi(X_r)$ is the weighted controlled rough path obtained by composition of $X$ in \eqref{coordinate2} with $\partial_{x_{i_1}}\cdots\partial_{x_{i_n}}\gp\colon \bR^d\to \bR$  as defined in Proposition \eqref{composition}.
\end{thm}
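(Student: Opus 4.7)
Set $F_{st}:=\varphi(x_t)-\varphi(x_s)-\mathrm{RHS}_{st}$, where $\mathrm{RHS}_{st}$ denotes the right-hand side of \eqref{change_of_variable}. Both $(s,t)\mapsto\varphi(x_t)-\varphi(x_s)$ and $\mathrm{RHS}_{st}$ are additive in the intermediate point (by construction of rough integrals in Proposition~\ref{rough_integration}), so $F_{st}=F_{su}+F_{ut}$. If I can show $|F_{st}|\lesssim|t-s|^{(N_\gamma+1)\gamma}$, then, since $(N_\gamma+1)\gamma>1$, telescoping along any partition $\pi$ of $[s,t]$ gives $|F_{st}|\lesssim|t-s|\,|\pi|^{(N_\gamma+1)\gamma-1}\to 0$ as $|\pi|\to 0$, forcing $F\equiv 0$. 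The problem therefore reduces to matching, up to $O(|t-s|^{(N_\gamma+1)\gamma})$, the two ``germs'' approximating the LHS and the RHS.

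\textbf{Germ of the LHS.}
Taylor's formula to order $N_\gamma$ combined with the $\gamma$-Hölder regularity of $x$ yields
\begin{equation*}
\varphi(x_t)-\varphi(x_s)=\sum_{n=1}^{N_\gamma}\frac{1}{n!}\sum_{i_1,\ldots,i_n=1}^{d}\partial_{i_1}\cdots\partial_{i_n}\varphi(x_s)\prod_{k=1}^{n}(x^{i_k}_t-x^{i_k}_s)+O(|t-s|^{(N_\gamma+1)\gamma}).
\end{equation*}
The character property of $\X$ converts the product into $\langle\X_{st},\bullet_{i_1}\cdots\bullet_{i_n}\rangle$. For $n\ge 2$ I then invoke the alternative expression \eqref{defn_alternative_bracket} of the bracket polynomial together with the defining property \eqref{Ibpf} of the simple bracket extension to split
\begin{equation*}
\langle\X_{st},\bullet_{i_1}\cdots\bullet_{i_n}\rangle=\langle\widehat{\X}_{st},\bullet_{(i_1\cdots i_n)}\rangle+\langle\widehat{\X}_{st},\mathbf{B}^{+}\Delta'(\bullet_{i_1}\cdots\bullet_{i_n})\rangle.
\end{equation*}

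\textbf{Germ of the RHS.}
The single-step expansion \eqref{equ:rough_int_est} applied to each rough integral of \eqref{change_of_variable}, together with \eqref{composition_ope2} giving the pairing $\langle(\bullet_1^{k_1}\cdots\bullet_d^{k_d})^{*},\partial_{i_1}\cdots\partial_{i_n}\Phi(X_s)\rangle=\partial_{i_1}\cdots\partial_{i_n}\partial^{k}\varphi(x_s)/k!$ for every multi-index $k\in\bN^d$, transforms the RHS into
\begin{equation*}
\sum_{n=1}^{N_\gamma}\frac{1}{n!}\sum_{i_1,\ldots,i_n=1}^{d}\sum_{\substack{k\in\bN^d\\|k|\le N_\gamma-n}}\frac{\partial_{i_1}\cdots\partial_{i_n}\partial^{k}\varphi(x_s)}{k!}\,\langle\widehat{\X}_{st},B^{+}_{(i_1\cdots i_n)}(\bullet_1^{k_1}\cdots\bullet_d^{k_d})\rangle+O(|t-s|^{(N_\gamma+1)\gamma}),
\end{equation*}
where for $n=1$ I used the identification $B^{+}_{(i)}=B^{+}_i$ and the fact that $\widehat{\X}$ extends $\X$ on $\cH(\{1,\ldots,d\})$ to merge the first sum of \eqref{change_of_variable} into the general $n=1$ term.

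\textbf{Matching and main obstacle.}
The $k=0$ contributions to the RHS germ immediately reproduce the $\langle\widehat{\X}_{st},\bullet_{(i_1\cdots i_n)}\rangle$ parts of the transformed LHS germ, so the remaining identity to verify is
\begin{align*}
&\sum_{n=2}^{N_\gamma}\frac{1}{n!}\sum_{j_1,\ldots,j_n=1}^{d}\partial_{j_1}\cdots\partial_{j_n}\varphi(x_s)\,\langle\widehat{\X}_{st},\mathbf{B}^{+}\Delta'(\bullet_{j_1}\cdots\bullet_{j_n})\rangle \\
&\qquad=\sum_{n'=1}^{N_\gamma}\frac{1}{n'!}\sum_{l_1,\ldots,l_{n'}=1}^{d}\sum_{\substack{k\in\bN^d\\ 1\le|k|\le N_\gamma-n'}}\frac{\partial_{l_1}\cdots\partial_{l_{n'}}\partial^{k}\varphi(x_s)}{k!}\,\langle\widehat{\X}_{st},B^{+}_{(l_1\cdots l_{n'})}(\bullet_1^{k_1}\cdots\bullet_d^{k_d})\rangle.
\end{align*}
From \eqref{key_combinatorial}, $\Delta'(\bullet_{j_1}\cdots\bullet_{j_n})$ is a sum over partitions $\{1,\ldots,n\}=A\sqcup B$ with both blocks non-empty. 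Indexing by $m=|A|\ge 1$ and $n'=|B|\ge 1$ introduces a factor $\binom{n}{m}$, which combined with $1/n!$ yields $1/(m!\,n'!)$; then converting the free tuple $(j_r)_{r\in A}$ into a multi-index $k$ with $|k|=m$ produces a further multinomial factor $m!/k!$, collapsing everything to $1/(n'!\,k!)$ and reproducing the right-hand side. Executing this combinatorial bookkeeping, i.e.\ splitting $\Delta'$, reparametrizing by block sizes, converting tuples to multi-indices, and verifying the cancellations, is the main technical obstacle of the proof; once this is done one obtains $|F_{st}|=O(|t-s|^{(N_\gamma+1)\gamma})$ and the telescoping described in the first paragraph concludes.
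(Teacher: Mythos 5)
Your proposal is correct and follows essentially the same route as the paper: expand both sides into germs paired against $\widehat{\X}$ via \eqref{equ:rough_int_est} and Taylor's formula, use the bracket-polynomial identity \eqref{defn_alternative_bracket} together with \eqref{Ibpf} to relate $\langle\X_{st},\bullet_{i_1}\cdots\bullet_{i_n}\rangle$ to $\langle\widehat{\X}_{st},\bullet_{(i_1\cdots i_n)}\rangle$, match the remaining terms combinatorially, and conclude from the additivity of the defect and the bound $o(|t-s|)$. The one thing to make explicit in your deferred combinatorial step is that matching the sum over set-partitions coming from $\Delta'$ against the sum over \emph{ordered} tuples $(l_1,\ldots,l_{n'})$ weighted by $1/n'!$ requires the symmetry hypothesis 2) of Definition \ref{bracket}, i.e.\ that $\langle\widehat{\X}_{st},B^{+}_{(l_{\sigma(1)}\cdots l_{\sigma(n')})}(\cdot)\rangle$ is independent of the permutation $\sigma$ — the paper invokes exactly this at the corresponding stage, and without it the term-by-term identification fails since the decorations are a priori distinct letters of $\cA_{\gamma}^d$.
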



\begin{proof}
Our proof retraces Kelly's original one in \cite{kelly2012ito} via a slightly different notation. We repeat here for sake of completeness. We first remark that for any  choice of  $i_1\,,\cdots\,, i_n\in \{1\,, \cdots\,, d\}$ one has $\partial_{x_{i_1}}\cdots\partial_{x_{i_n}}\gp\in C^{N_{\gga}+1-n}_b(\bR^d, \bR)$ and from Proposition \ref{composition} the weighted controlled rough path $\partial_{i_1}\cdots\partial_{i_n}\gP(X)\in \cD^{(N_{\gamma}+1-n)\gamma}_{\X}$. Moreover we use the explicit structure of $X$ in \eqref{coordinate2} to deduce the explicit form of this weighted controlled rough path
\begin{equation}\label{explicit_formula_branc}
\partial_{i_1}\cdots\partial_{i_n}\gP(X_t)= \sum_{m=0}^{N_{\gamma}- n}\frac{1}{m!}\sum_{j_1,\cdots, j_m =1} ^{d} \partial_{j_1}\cdots\partial_{j_m}\left(\partial_{x_{i_1}}\cdots\partial_{x_{i_n}}\gp(x_t)\right)(\bullet_{j_1}\cdots \bullet_{j_m})\,.
\end{equation}
Thus every rough integral in the right-hand side of \eqref{change_of_variable} is well defined accordingly with Proposition \eqref{rough_integration}, obtaining the identities
\[\begin{split}
&\int_s^t\partial_{i_1}\cdots\partial_{i_n}\gP(X_r)d^b\widehat {\X}^{(i_1 \cdots i_n)}_r=\\& =\sum_{m=0}^{N_{\gamma}-n}\sum_{j_1,\cdots, j_{m}=1} ^{d} \frac{\partial_{j_1}\cdots\partial_{j_m} \left(\partial_{x_{i_1}}\cdots\partial_{x_{i_n}}\gp(x_t)\right)}{m!}\langle \widehat{\X}_{st}, B^+_{(i_1\cdots i_n)}(\bullet_{j_1}\cdots \bullet_{j_{m}}) \rangle+ o(|t-s|)\,,
\end{split}\]
\[\int_s^t \partial_i\gP(X_r) d^b\X^i_r= \sum_{m=0}^{N_{\gamma}-1}\sum_{j_1,\cdots, j_{m}=1} ^{d} \frac{\partial_{j_1}\cdots\partial_{j_m} \left(\partial_{x_{i}}\gp(x_t)\right)}{m!}\langle \widehat{\X}_{st}, B^+_{i}(\bullet_{j_1}\cdots \bullet_{j_{m}}) \rangle+ o(|t-s|)\,,\]
where the second one is a direct consequence of of the hypothesis that $\widehat{\X}$ is over $\X$.
By summing over all indices $i_1,\cdots, i_n$ and $n$, we use the substitution $k=m+n$ to obtain
\begin{equation}\label{multi_expansion}
\begin{split}&\sum_{i=1}^d\int_s^t \partial_i\gP(X_r) d^b\X^i_r+ \sum_{n=2}^{N_{\gamma}} \frac{1}{n!} \sum_{i_1,\cdots, i_n=1}^d\int_s^t\partial_{i_1}\cdots\partial_{i_n}\gP(X_r)d^b\widehat {\X}^{(i_1 \cdots i_n)}_r=\\&= \sum_{k=1}^{N_{\gamma}}\frac{1}{k!}\sum_{i_1,\cdots, i_{k} =1} ^{d} \partial_{i_1}\cdots\partial_{i_k}\gp(x_s)\langle \widehat{\X}_{st},\sum_{n=1}^k\binom{k}{n} B_{(i_1\cdots i_{n})}^+(\bullet_{i_{n+1}}\cdots \bullet_{i_{k}}) \rangle + o(|t-s|)\,.\end{split}
\end{equation}
For any fixed $k$ and $i_1,\cdots ,i_k$, we consider the sum of trees on the right hand side of (\ref{multi_expansion}). Since this expression does not change in its equivalence class by applying any permutation of $k$ by hypothesis on  $\widehat{\X}$ elements we can write 
\begin{equation}\label{multi_expansion2}
\begin{split}
&\langle \widehat{\X}_{st},\sum_{n=1}^k\binom{k}{n} B_{(i_1\cdots i_{n})}^+(\bullet_{i_{n+1}}\cdots \bullet_{i_{k}})\rangle =\\&= \langle \widehat{\X}_{st},\frac{1}{k!} \sum_{\gs\in S_k}\sum_{n=1}^k\binom{k}{n} B_{(i_{\gs(1)}\cdots i_{\gs(n)})}^+(\bullet_{i_{\gs(n+1)}}\cdots \bullet_{i_{\gs(k)}})\rangle\,\\&
= \langle \widehat{\X}_{st},\sum_{\gs\in S_k}\sum_{n=1}^{k-1}\frac{1}{(k-n)!n!} B_{(i_{\gs(1)}\cdots i_{\gs(n)})}^+(\bullet_{i_{\gs(n+1)}}\cdots \bullet_{i_{\gs(k)}})+ \sum_{\gs\in S_k}\frac{1}{k!} \bullet_{(i_{\gs(1)}\cdots i_{\gs(k)})}\rangle\,.\end{split}
\end{equation}
Using the properties $\bullet_{(i_{\gs(1)}\cdots i_{\gs(k)})}\sim \bullet_{(i_{1}\cdots i_{k})} $ and  \[B_{(i_{\gs(1)}\cdots i_{\gs(n)})}^+(\bullet_{i_{\gs(n+1)}}\cdots \bullet_{i_{\gs(k)}})\sim B_{(b_1\cdots b_{n})}^+(\bullet_{a_1} \ldots \bullet_{a_{n-k}})\,,\]
where $a=\{a_1,\cdots,a_{n-k}\}$, $ b=\{b_1,\cdots, b_{n}\}$ is a non empty partition of $\{i_1,\cdots i_k\}$ into two set, it is straightforward to show that the last equation in (\ref{multi_expansion2}) is then equal to 
\begin{equation}\label{key_identity}
\langle \widehat{\X}_{st},\sum_{ \{i_1\,,\cdots, i_k\}=\{a,b\}}B_{(b_1\cdots b_{n})}^+(\bullet_{a_1} \ldots \bullet_{a_{n-k}})+ \bullet_{(i_{1}\cdots i_{k})}\rangle =\langle\X_{st}, \bullet_{i_1} \ldots \bullet_{i_k}\rangle\,,
\end{equation}
by counting the respective multiplicity of the indexes in the two sums and using the main property \eqref{Ibpf} defining a simple bracket extension.
On the other hand, by simply combining the Taylor remainder formula on $\gp$ with the H\"older regularity of $x$ one has
\begin{equation}\label{almost_end_kelly}
\gp(x_t)- \gp(x_s)=\sum_{k=1}^{N_{\gamma}}\frac{1}{k!}\sum_{i_1,\cdots, i_{k} =1} ^{d} \partial_{i_1}\cdots\partial_{i_k}\gp(x_s)\langle\X_{st}, \bullet_{i_1} \ldots \bullet_{i_k}\rangle + o(|t-s|)\,.
\end{equation}
Subtracting formula \eqref{multi_expansion} from \eqref{almost_end_kelly}, we obtain from that the increments of the function
\[A_t=\gp(x_t)- \gp(x_0)+ \sum_{i=1}^d\int_0^t \partial_i\gP(X_r) d^b\X^i_r+ \sum_{n=2}^{N_{\gamma}} \frac{1}{n!} \sum_{i_1,\cdots, i_n=1}^d\int_0^t\partial_{i_1}\cdots\partial_{i_n}\gP(X_r)d^b\widehat {\X}^{(i_1 \cdots i_n)}_r\]
satisfy $ |A_t- A_s|= o(|t-s|)$, thus the function $A$ is constant and equal to $0$. Thereby obtaining the thesis.
\end{proof}
\begin{rk}
In order to have a effective theory related to the notion of simple extension, it is also necessary to prove the existence of at least one simple bracket extension over a branched rough path for any $\gamma\in (0,1)$. This task was completed in \cite[Prop. 5.6]{Friz2020course},  using the standard Lyons-Victoir extension \cite{Lyons2007} in the context of branched rough path, as described in \cite{hairer2015geometric}. However, this result is only partially constructive, meaning that the condition \eqref{Ibpf} expresses the value of a function $\widehat{\X}$ on a set of trees too small to satisfy the definition of branched rough path.
\end{rk}
\subsection{Quasi-geometric bracket extensions}
Theorem \ref{chg_thm} shows us a deterministic change of variable formula which works with every $\gga$-H\"older  path $x\colon [0,T]\to \bR^d$. This formula is extremely general but it relies on a double choice: a particular choice of a $\gamma$-branched rough path $\X$ over $x$ and a simple bracket extension $\widehat{\X}$ over $\X$. Both elements are not unique and checking the relations \eqref{Ibpf}, which are not automatically satisfied, might require an additional effort. In what follows, we will use the notion of quasi-geometric rough paths to provide the notion of \emph{quasi-geometric bracket extension}, which will provide a sufficient condition to build a bracket extension directly from the path $x$. To introduce a quasi-geometric rough path, we define a new alphabet, endowed with a commutative bracket. 
\begin{defn}\label{alphabet_bracket}
For any and $\gga\in (0,1)$ and $d\geq 1$ integer we define the alphabet $\bA_{\gga}^{d}$ as
\[\bA_{\gga}^{d}=\{\ga\in  \bN^d\setminus \{0\}\colon \abs{\ga}\leq N_{\gga}\}\]
where $N_{\gga}$ is defined above and for any $\ga\in \bN^d$, $\ga= (\ga_1,\cdots,\ga_d)$ we set $ \abs{\ga}:= \sum_{i=1}^d \ga_i$. Moreover, we define the function $[,]_{\gamma}\colon \bR^{\bA_{\gga}^{d}}\times\bR^{\bA_{\gga}^{d}}\to\bR^{ \bA_{\gga}^{d}}$ as the unique bilinear map satisfying for any  $\alpha, \beta\in \bA_{\gga}^{d}$ the property
\begin{equation}\label{eq:def_[]N}
[\ga,\gb]_{\gamma}: =\left\{
	\begin{array}{ll}
    \ga +\gb  & \mbox{if } \ga+\gb\in \bA_{\gga}^{d}\,, \\
	0 & \mbox{otherwise}
	\end{array}
	\right.
\end{equation}
where the sum involved is the intrinsic sum between multi-indices.
\end{defn}
Similarly as before, $\{1,\cdots, d\} $ embeds in $\bA_{\gamma}^d$ for any $\gga\in(0,1)$ by simply sending every $i \in\{1\,, \cdots\,, d\}$ to the $i$-th element of the canonical basis. Thereby having   an isomorphism if $1/2<\gga<1$. We can check trivially that $[,]_{\gamma}$ is a commutative bracket. We denote by $\qshuffle_{\gamma}$ the associate quasi shuffle product on $\hat{T}(\bA_{\gamma}^d)$. Furthermore, writing $\bA_{\gamma}^d$ as
\[\bA_{\gamma}^d= \bigcup_{n=1}^{N_{\gamma}}A_n\,,\qquad A_n=\{\ga\in  \bN^d\setminus \{0\}\colon \abs{\ga}=n\}\,,\]
we can write $\bR^{\bA_{\gamma}^d}=\bigoplus_{n=1}^{N_{\gamma}}\bR^{A_n}$ and  we obtain again a decomposition of which is compatible with the commutative bracket $[\,,]_{\gamma}$, as explained in Proposition \ref{prop_natural_grading}. We denote the intrinsic weight by $\Vert\cdot\Vert_{\gamma}$. For any $\ga\in \bA_{\gga}^d$ the maps $(\cI_\alpha)_{\alpha\in \bA_{\gga}^d}\colon \bA_{\gga}^d\to \bA_{\gga}^d$, defined as $ \cI_{\alpha}(w)= w\alpha$ for any $w\in  W(\bA_{\gga}^d)$, they are all integrations map of order $|\alpha|$ with respect to the word length. We introduce the main definition of the section.

\begin{defn}\label{Def_quasi_geom_bracket}
Let $\gga\in (0,1)$ and $d\geq 1$ integer. Every  $\gga$-quasi-geometric rough path $\bX$ associated to $\hat{T}(\bA_{\gamma}^d)$ is said a \emph{quasi-geometric bracket extension}.
\end{defn}
The whole section is devoted to show a constructive procedure to construct a simple bracket extension starting from a  quasi-geometric bracket extension. Naturally, these two objects are defined on two different types of algebraic structure: the first on forests and the second one on words. Moreover, there are also two different alphabets behind. Thus the desired procedure will be based upon a link between these objects. To establish a link between $\cA_{\gamma}^d$ and $\bA_{\gga}^{d}$, we introduce   an explicit function among these two alphabets.
\begin{defn}\label{defn_sym}
Let $\gga\in (0,1)$ and $d\geq 1$ integer. We define the map $S \colon \cA_{\gamma}^d\to \bA_{\gga}^{d}$ on any $a\in \cA_{\gamma}^d$, $a= (i_1\cdots i_n)$,  as
\begin{equation}\label{component}
S(a)= (m_1, \cdots, m_d)\,,
\end{equation}
where for any $j\in \{1,\cdots, d\}$ we set the integers  $m_j:=\sharp \{l\in \{1,\cdots, n\}\colon i_l=j\}$. 
\end{defn}
For instance, if $d=3$ and $\gga\leq 1/3$ one has 
\[S((12))=(1,1,0)\,,\quad S((21))=(1,1,0)\,,\quad S((333))=(0,0,3)\,.\] 
More generally, we can use the commutative bracket to obtain the trivial identity
\begin{equation}\label{alpha_identity}
S((i_1\cdots i_n))= [e_{i_1}\cdots e_{i_n}]_{\gga}\,,
\end{equation}
where $ \{e_j\}_{j\in \{1, \cdots, d\}}$ is the canonical basis of $\bN^d$. By definition, the map $S$ is a well defined surjective application. Moreover, we can also use the application $S$ to describe the equivalence classes of $\cA_{\gga}^d$ with respect to the symmetry relation $\sim$ defined above. 
\begin{lem}\label{combinatorial_lemma}
For any $\gga\in (0,1)$ and $d\geq 1$ integer, $ S\colon (\cA_{\gamma}^d/\sim)\to \bA_{\gga}^{d} $  is bijective.
\end{lem}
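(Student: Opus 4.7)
The plan is to verify the three standard checks: well-definedness on equivalence classes, surjectivity, and injectivity modulo $\sim$. None of these steps is truly hard; the content is combinatorial and essentially amounts to observing that $S$ records the multiplicity vector of a word, which is by construction a complete invariant under permutation of letters.

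First I would observe that $S$ descends to the quotient. If $a = (i_1 \cdots i_n)$ and $b = (j_1 \cdots j_n)$ satisfy $a \sim b$, then by definition there is a permutation $\sigma \in S_n$ with $i_{\sigma(l)} = j_l$ for every $l$. For each $k \in \{1,\ldots,d\}$ the set $\{l : j_l = k\}$ is just the image under $\sigma^{-1}$ of $\{l : i_l = k\}$, so the two sets have the same cardinality, and therefore $S(a) = S(b)$. Hence the induced map on $\cA_{\gamma}^d / \sim$, which I continue to call $S$, is well defined.

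Next I would prove surjectivity. Given $(m_1,\ldots,m_d) \in \bA_{\gamma}^{d}$, set $n := m_1 + \cdots + m_d$; since the multi-index is nonzero and has $|m|\leq N_\gamma$, we have $1 \leq n \leq N_\gamma$. Form the canonical representative
\[
a := (\,\underbrace{1 \cdots 1}_{m_1}\,\underbrace{2 \cdots 2}_{m_2}\cdots \underbrace{d \cdots d}_{m_d}\,) \in \{1,\ldots,d\}^n \subset \cA_{\gamma}^d,
\]
which is well defined because $n \leq N_\gamma$. A direct reading of Definition \ref{defn_sym} gives $S(a) = (m_1,\ldots,m_d)$, so $S$ hits every element of $\bA_\gamma^d$.

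For injectivity, suppose $a = (i_1 \cdots i_n)$ and $b = (j_1 \cdots j_m)$ satisfy $S(a) = S(b) = (m_1,\ldots,m_d)$. Since $\sum_k m_k = n = m$ (by Definition \ref{defn_sym} both expressions count the length), the words have the same length. For each letter $k$ the preimages $I_k := \{l : i_l = k\}$ and $J_k := \{l : j_l = k\}$ are subsets of $\{1,\ldots,n\}$ of the same cardinality $m_k$, and together they partition $\{1,\ldots,n\}$. Choose any bijection $\sigma_k : J_k \to I_k$ for each $k$ and assemble them into a permutation $\sigma \in S_n$; by construction $i_{\sigma(l)} = j_l$ for every $l$, which means precisely $a \sim b$. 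Thus $S$ is injective on $\cA_\gamma^d / \sim$, completing the proof. The only real point to flag is the compatibility of the length bound $n \leq N_\gamma$ under the correspondence, but this is built into the matching definitions of $\cA_\gamma^d$ and $\bA_\gamma^d$.
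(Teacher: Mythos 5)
Your proof is correct and follows essentially the same route as the paper: both arguments reduce to the observation that $S(a)=S(b)$ if and only if $a\sim b$, with the permutation in the injectivity step built from matching up the preimage sets of each letter (the paper routes through the canonical sorted representative, you assemble the bijections directly, which is the same idea). Your explicit surjectivity check via the canonical word is a welcome addition that the paper leaves implicit.
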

\begin{proof}
In order to prove the bijection it is sufficient to show that for every $1\leq n\leq N$, $S$ is a bijection between $A_n$ and $A'_n$ where 
\[A_n= \{1,\cdots, d\}^n/{\sim}\,,\quad A'_n= \{v\in  \bN^d\setminus \{0\}\colon |v|=n\}\,.\]
That is for any $a,b\in \{1,\cdots, d\}^n$, one has $a\sim b$ if and only if $S(a)=S(b)$. Thanks to Definition (\ref{component}), if $a\sim b$ then both $S(a)$ and $S(b)$ belong to $A'_n$ and their components do not change under permutation of the writing of $a$, therefore $S(a)=S(b)$. On the other hand, if $S(a)= S(b)=(m_1, \cdots, m_d)$, then for any $j=\{1,\cdots, d\}$ both $a$ and $b$ contain the same amount of coordinates with the value $j$. Rearranging the components of $a$ and $b$, there exist two permutations $\gs_a$ and $\gs_b$ such that
\begin{equation}\label{canonical}
a_{\gs_{a}}= b_{\gs_b}= (\underset{m_1 \text{ times}}{\underbrace{1\cdots1}}\;\underset{m_2 \text{ times}}{\underbrace{2\cdots 2}}\,\cdots \underset{m_d \text{ times}}{\underbrace{d\cdots d}} )\,.
\end{equation}
By taking the permutation $\gs= \gs_b^{-1}\circ \gs_a$, then $ a_{\gs}=b$ by construction.
\end{proof}
A direct consequence of this result is the following elementary combinatorial identity. If $c_{i_1\cdots i_n}$,  is a sequence of values indexed by $ \{1,\cdots, d\}^n $, $n\geq 1$, which is invariant by permutation, then  we have the identity
\begin{equation}\label{identity_sum}
\sum_{i_1, \cdots, i_n=1}^d c_{i_1\cdots i_n}= \sum_{\gb\in \bN^d \colon \abs{\gb}=n}\frac{n!}{\gb_1!\cdots \gb_d!}c_{\gb'}\,,
\end{equation}
where $\gb'\in \{1,\cdots, d\}^n$  is defined as
\[\gb'=(\underset{\gb_1 \text{ times}}{\underbrace{1\cdots1}}\;\underset{\gb_2 \text{ times}}{\underbrace{2\cdots 2}}\,\cdots \underset{\gb_d \text{ times}}{\underbrace{d\cdots d}} )\,.\]
This simple identity allows to justify the equivalence between the definitions \eqref{composition_ope} and \eqref{composition_ope2}. Even if the map $S$ is originally defined at the level of the alphabets, we can easily extend it at the level of forests.
\begin{prop}
Let $\gga\in (0,1)$ and $d\geq 1$ integer. By simply applying the application $S$ in Definition \ref{defn_sym} at each decoration on any element of $\cF(\cA_{\gamma}^d)$ and extending it linearly to preserve $\1$, one has a Hopf Algebra morphism $ \cS \colon \cH(\cA_{\gamma}^d)\to \cH(\bA_{\gga}^{d})$. We call this application the \emph{symmetrization map}.
\end{prop}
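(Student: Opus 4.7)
The plan is to verify separately that $\mathcal{S}$ respects unit, product, counit and coproduct; the antipode is then automatic because both $\mathcal{H}(\mathcal{A}_\gamma^d)$ and $\mathcal{H}(\mathbb{A}_\gamma^d)$ are graded connected, so the antipode is uniquely determined by the bialgebra structure. Since by definition $\mathcal{S}(\mathbf{1})=\mathbf{1}$, $\mathbf{1}^*(\mathcal{S}(f))=\mathbf{1}^*(f)$ for every forest $f$, and the disjoint union of forests commutes with relabelling the decorations node-by-node, both counit and product compatibility are immediate: $\mathcal{S}(f_1 f_2)=\mathcal{S}(f_1)\mathcal{S}(f_2)$ for all $f_1,f_2\in\mathcal{F}(\mathcal{A}_\gamma^d)$.

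The only non-trivial point is the identity
\[
\Delta \mathcal{S}(\tau)=(\mathcal{S}\otimes\mathcal{S})\Delta\tau,\qquad \tau\in\mathcal{H}(\mathcal{A}_\gamma^d),
\]
which I would prove by induction on the cardinality $|\tau|$, exploiting the recursive definition of the Connes--Kreimer coproduct in \eqref{defn_BCK_coproduct} and the key commutation relation $\mathcal{S}\circ B_a^+=B_{S(a)}^+\circ\mathcal{S}$ for every $a\in\mathcal{A}_\gamma^d$, which holds by construction of $\mathcal{S}$. The base case $\tau=\mathbf{1}$ is clear. For a tree $\tau=B_a^+(\sigma)$ with $|\sigma|<|\tau|$, one computes
\[
\Delta\mathcal{S}(B_a^+(\sigma))=\Delta B_{S(a)}^+(\mathcal{S}(\sigma))=B_{S(a)}^+(\mathcal{S}(\sigma))\otimes\mathbf{1}+(\mathrm{id}\otimes B_{S(a)}^+)\Delta\mathcal{S}(\sigma),
\]
and by the inductive hypothesis $\Delta\mathcal{S}(\sigma)=(\mathcal{S}\otimes\mathcal{S})\Delta\sigma$. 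Pulling $\mathcal{S}$ out using the commutation $B_{S(a)}^+\circ\mathcal{S}=\mathcal{S}\circ B_a^+$ on the right factor yields $(\mathcal{S}\otimes\mathcal{S})\Delta B_a^+(\sigma)$. The general forest case then follows from the multiplicativity of $\Delta$ and from the product compatibility of $\mathcal{S}$ already established.

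There is essentially no real obstacle: the argument is a clean structural verification, since the entire bialgebra structure of both $\mathcal{H}(\mathcal{A}_\gamma^d)$ and $\mathcal{H}(\mathbb{A}_\gamma^d)$ is generated from $\mathbf{1}$ by grafting and disjoint union, and $\mathcal{S}$ manifestly commutes with both operations modulo the relabelling of the new root. The only point requiring a little care is to keep track of which alphabet the grafting operators $B^+$ refer to on each side of the equation, so that the two coproducts are not confused during the induction. Once coproduct compatibility is in place, the fact that $\mathcal{S}$ intertwines the two antipodes $\mathcal{A}$ is an automatic consequence of the uniqueness of the antipode on a graded connected bialgebra, so $\mathcal{S}$ is indeed a Hopf algebra morphism.
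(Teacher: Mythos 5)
Your proposal is correct and follows essentially the same route as the paper, which simply observes that since $S$ acts only on the decorations, the product and coproduct compatibilities are straightforward to verify; your write-up just carries out that verification explicitly via the commutation $\cS\circ B_a^+=B_{S(a)}^+\circ\cS$ and induction on the forest cardinality, with the antipode handled by uniqueness on graded connected bialgebras. No gaps.
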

\begin{proof}
Since $S$ acts only at the level of decoration, it is straightforward to verify the conditions
\[
\cS (\t_1\t_2)= \cS(\t_1) \cS(\t_2)\,,\quad (\cS\otimes \cS)\gD\t_1= \gD \cS(\t_1)\,,
\]
for any $\t_1$, $\t_2\in \cH(\cA_{\gamma}^d)$. Thereby obtaining the thesis.
\end{proof}
The symmetrisation map represents the first operation of our connection. To conclude the procedure, we introduce and  explicit linear map, which connects trees with forests.
\begin{prop}
For any  alphabet $A$ endowed with and a commutative bracket, there exists a unique Hopf algebra morphism $\psi\colon \cH(A)\to \hat{T}(A)$, defined recursively on $\cF(A)$ by the following conditions: 
\begin{itemize}
\item  for any $a\in A$ and any forest $f\in \cF(A)$
\begin{equation}\label{quasi_arborification1}
\psi( B_{a}^+(f)):=(\psi(f)) a\,;
\end{equation}
\item $\psi(\1):=\1 $ and for every couple of forests $\gs,\t\in \cF(A)$
\begin{equation}\label{quasi_arborification2}
\psi(\gs\t):= \psi(\gs)\qshuffle \psi(\t)\,.
\end{equation}
\end{itemize}
We call this application the \emph{contracting arborification}.
\end{prop}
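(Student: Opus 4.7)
\medskip
\noindent\emph{Proof plan.} The plan is to verify first that the prescribed relations determine a well-defined linear map on $\cH(A)$ and that this map is an algebra morphism by construction, then to check by induction on $|f|$ that it intertwines the coproducts, and finally to note that the Hopf algebra morphism property follows automatically.

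For well-definedness I would use that every non-empty tree in $\cT(A)$ is uniquely of the form $B_a^+(g)$ with $|g|<|B_a^+(g)|$, while every forest in $\cF(A)$ is a juxtaposition of trees. This lets one define $\psi(f)$ by induction on $|f|$, the recursion being consistent because $\qshuffle$ is commutative and associative, and hence the right-hand side of \eqref{quasi_arborification2} does not depend on the order chosen for the factors. By construction $\psi\colon(\cH(A),\cdot,\1)\to(\hat T(A),\qshuffle,\1)$ is an algebra morphism, and the counit is preserved since $\psi(\1)=\1$ and $\psi$ sends every non-empty forest to a linear combination of non-empty words.

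The heart of the argument is $\nabla\psi=(\psi\otimes\psi)\gD$, proved by induction on $|f|$. For a tree $\t=B_a^+(g)$, the recursive definition \eqref{defn_BCK_coproduct} yields $\gD\t=\t\otimes\1+(\text{id}\otimes B_a^+)\gD g$, and a direct inspection of \eqref{def_deconcatenation} gives the companion identity
\[\nabla(wa)=wa\otimes\1+(\text{id}\otimes\iota_a)\nabla w\,,\qquad \iota_a(v):=va\,,\]
valid in $\hat T(A)$ (the coproduct $\nabla$ does not see the product structure). Applying $\psi\otimes\psi$ to the formula for $\gD\t$, rewriting $\psi\circ B_a^+=\iota_a\circ\psi$ thanks to \eqref{quasi_arborification1}, and using the inductive hypothesis $(\psi\otimes\psi)\gD g=\nabla\psi(g)$, the two sides match. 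The remaining case $f=\gs\t$ with $|\gs|,|\t|\geq 1$ follows from the bialgebra axioms: both $\gD$ and $\nabla$ are algebra morphisms for the respective products, and since $\psi$ is already an algebra morphism one obtains
\[\nabla\psi(\gs\t)=\nabla(\psi(\gs)\qshuffle\psi(\t))=\nabla\psi(\gs)\cdot\nabla\psi(\t)=(\psi\otimes\psi)(\gD\gs\cdot\gD\t)=(\psi\otimes\psi)\gD(\gs\t)\,,\]
where the middle equalities use that $\psi\otimes\psi$ carries the product induced by juxtaposition on $\cH(A)^{\otimes 2}$ to the product induced by $\qshuffle$ on $\hat T(A)^{\otimes 2}$, together with the inductive hypothesis applied to $\gs$ and $\t$.

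At that stage $\psi$ is a bialgebra morphism between two locally finite graded connected Hopf algebras, so it commutes with the antipodes automatically and is therefore a Hopf algebra morphism. The only step demanding genuine manipulation is the tree case above, where the deconcatenation rule for $\nabla$ and the grafting-to-right-concatenation rule \eqref{quasi_arborification1} must be aligned; I expect this to be the main obstacle, although it is a clean inductive match rather than a deep computation.
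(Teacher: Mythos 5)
Your proposal is correct and follows essentially the same route as the paper: induction on the forest cardinality, splitting into the tree case $B_a^+(f)$ (where the deconcatenation rule for $\nabla(wa)$ is matched against the recursive formula for $\gD B_a^+(f)$) and the product case (handled by the bialgebra compatibility of $\nabla$ with $\qshuffle$ and of $\gD$ with juxtaposition). The extra remarks on well-definedness and on the antipode being automatic for graded connected bialgebras are standard and consistent with what the paper leaves implicit.
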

\begin{proof}
Conditions \eqref{quasi_arborification1} and \eqref{quasi_arborification2} identify uniquely an algebra morphism between $\cH(A)$ and $\hat{T}(A)$. Thus the proposition will follow, once we prove for any $ h \in \cH(A)$ the identity 
\begin{equation}\label{hopf_morphism}
\nabla\psi (h)= (\psi\otimes\psi)\gD h\,. 
\end{equation}
To prove this identity, we will follow essentially the same argument as \cite[Lem. 4.8]{hairer2015geometric} but in the  context of quasi-shuffle product. We repeat it here for the sake of completeness. Since both sides of \eqref{hopf_morphism} are linear  in $h$, it is then sufficient to prove \eqref{hopf_morphism} on any forest $f\in\cF(A)$ working by induction on the forest cardinality $|\cdot|$. The basis of induction is trivial by definition. Supposing the identity  \eqref{hopf_morphism} true on any forests $h'$ such that $\abs{h'}\leq m$, we consider a forest  $h$ such that $\abs{h}= m+1$. If this  forest is under form $h=B^{+}_a(f)$ for some $f\in \cF(A)$, by definition of $\psi$ in \eqref{quasi_arborification1} and the coproduct $\nabla$ one has
\begin{equation}\label{hopf_morphism1}
\nabla \psi(B^{+}_a(f))=\psi(f)a\otimes \1 +\nabla(\psi(f)) (\1 \otimes a)\,. 
\end{equation}
By induction one has $\nabla\psi(f)= (\psi\otimes\psi)\gD f$ and the right hand side of \eqref{hopf_morphism1} becomes
\[
(\psi\otimes\psi)(B^{+}_a(f)\otimes \1+ (id\otimes B^{+}_a )\gD f )= (\psi\otimes\psi)\gD B^{+}_a(f)\,.
\]
On the other hand, supposing that  $h=h_1h_2$ for some forests $h_1$, $h_2$ satisfying $ \abs{h_1} \vee\abs{h_2}\leq m$, we use  the bialgebra properties of $\nabla$ and $\qshuffle$ to obtain
\begin{equation}\label{hopf_morphism2}
\nabla \psi(h_1h_2)= \nabla (\psi(h_1)\qshuffle \psi(h_2))= (\nabla \psi(h_1))\qshuffle(\nabla\phi(h_2))\,,
\end{equation}
where we denote again by $\qshuffle$ the tensorisation of the quasi shuffle product on $\hat{T}(A)\otimes\hat{T}(A)$. The recursive hypothesis tells us that $\nabla (\psi(h_i))= (\psi\otimes\psi)\gD h_i$ for $i=1,2$, therefore the definition of $\psi$ as well as the bialgebra property of $\gD$ allows us to write the right hand side of (\ref{hopf_morphism2}) as
\[(\psi\otimes\psi)\gD h_1 \qshuffle (\psi\otimes\psi)\gD h_2= (\psi\otimes\psi)\gD h_1 \cdot \gD h_2 = (\psi\otimes\psi)\gD h_1h_2\,.\]
Thereby closing the induction.
\end{proof}

\begin{rk}
The map $\psi$ has already been introduced in \cite{curry2020} and it is a standard tool in the algebraic literature. Its name, contracting arborification comes as a tribute to Jean Lecalle's arborification apparatus in the context of Hopf algebras of trees, as explained in \cite{fauvet17}. We recall that in the shuffle case  a  function with the same properties was introduced in  \cite{hairer2015geometric} and it was denoted by $\phi_g$. Generally speaking, this proposition is a direct consequence of the Universal property for $\cH(A)$, see \cite[Theorem 2]{Connes1998}.
\end{rk}
By composing the symmetrisation map and the contracting arborification with the alphabet $\bA_{\gamma}^d $, we obtain an explicit Hopf algebra morphism $\psi_{\cS}:=(\psi\circ \cS)\colon \cH(\cA_{\gamma}^d)\to \hat{T}(\bA_{\gga}^d)$. Considering the adjoint map $\psi_{\cS}^*= \cS^* \circ \psi^*\colon \hat{T}(\bA_{\gga}^d)^{*}\to\cH(\cA_{\gamma}^d)^* $, we  obtain  the right connection.
\begin{thm}\label{big_thm}
Let $\gga\in (0,1)$ and $d\geq 1$ integer. For any quasi-geometric bracket extension $\bX$ the function $\psi_{\cS}^*\bX$ is a simple bracket extension.
\end{thm}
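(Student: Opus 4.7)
The plan is to verify directly that $\psi_\cS^*\bX$ satisfies Definition \ref{bracket}: namely, it is a $\gamma$-weighted rough path over $\cH(\cA_\gamma^d)$ with respect to the intrinsic weight $|\cdot|_\gamma$, and it satisfies the two extra conditions (the bracket-polynomial identity \eqref{Ibpf} and the symmetry invariance). The whole argument rests on the single structural observation that $\psi_\cS = \psi\circ\cS$ is a \emph{graded Hopf algebra morphism} from $\cH(\cA_\gamma^d)$ (graded by $|\cdot|_\gamma$, multiplied by the forest product, cogenerated by $\gD$) to $\hat T(\bA_\gamma^d)$ (graded by the intrinsic weight $\|\cdot\|_\gamma$, multiplied by $\qshuffle$, cogenerated by $\nabla$). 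The compatibility with the grading follows by a straightforward induction on $|\tau|_\gamma$: a grafting $B^+_{(i_1\cdots i_n)}$ produces the letter $S((i_1\cdots i_n))$ of weight $n$ by Definition \ref{defn_sym}, and forest multiplication becomes $\qshuffle$, which preserves the weight by Proposition \ref{prop_natural_grading}. Consequently $\psi_\cS$ carries $\cH(\cA_\gamma^d)^{N_\gamma}_{|\cdot|_\gamma}$ into $\hat T(\bA_\gamma^d)^{N_\gamma}_{\|\cdot\|_\gamma}$, so the pullback $\psi_\cS^*\bX$ makes sense as an element of $(\cH(\cA_\gamma^d)^{N_\gamma}_{|\cdot|_\gamma})^*$.

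From there the three abstract axioms of Definition \ref{def_inhom_rough_path} are all formal consequences of $\psi_\cS$ being a graded Hopf morphism. Chen's identity is transported by the coalgebra compatibility $\nabla\circ\psi_\cS=(\psi_\cS\otimes\psi_\cS)\gD$, which gives
\[
(\psi_\cS^*\bX_{su})*(\psi_\cS^*\bX_{ut})=\psi_\cS^*(\bX_{su}*\bX_{ut})=\psi_\cS^*\bX_{st}.
\]
The character identity \eqref{eq:charachter_truncated_rough_in} follows because $\psi_\cS(hh')=\psi_\cS(h)\qshuffle\psi_\cS(h')$ and $\bX$ is a character with respect to $\qshuffle$. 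Finally, the H\"older bound \eqref{eq:genrpbound_in} transfers from $\bX$ to $\psi_\cS^*\bX$ because $\psi_\cS$ preserves the weight: the image of any $\tau\in\cB^{N_\gamma}_{|\cdot|_\gamma}$ is a finite combination of words of weight exactly $|\tau|_\gamma$.

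Condition 2) in Definition \ref{bracket} (symmetry) is then immediate: by construction $S$ only records the multiset of letters, so $\cS(\sigma)=\cS(\tau)$ whenever $\sigma\sim\tau$, hence $\psi_\cS(\sigma)=\psi_\cS(\tau)$ and pairing against $\bX$ gives equality. The substantial step is Condition 1), namely
\begin{equation}\label{plan:key}
\psi_\cS\bigl(\bullet_{(i_1\cdots i_n)}\bigr)=\psi_\cS\bigl(\llangle i_1\cdots i_n\rrangle\bigr)\quad\text{in }\hat T(\bA_\gamma^d),
\end{equation}
after which pairing with $\bX_{st}$ yields \eqref{Ibpf}. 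The left-hand side is trivial: $\psi_\cS(\bullet_{(i_1\cdots i_n)})=S((i_1\cdots i_n))=[e_{i_1}\cdots e_{i_n}]_\gamma$, a single letter. For the right-hand side I will use the compact form $\llangle i_1\cdots i_n\rrangle=\bullet_{i_1}\cdots\bullet_{i_n}-\mathbf{B}^+\gD'(\bullet_{i_1}\cdots\bullet_{i_n})$ from \eqref{defn_alternative_bracket}, together with Proposition \ref{prop_alt_quasi_shuffle} iterated $n-1$ times, which rewrites
\[
\psi_\cS(\bullet_{i_1}\cdots\bullet_{i_n})=e_{i_1}\qshuffle\cdots\qshuffle e_{i_n}=\sum_{(C_1,\ldots,C_m)}[e_{i_{C_1}}]_\gamma\cdots[e_{i_{C_m}}]_\gamma,
\]
where the sum ranges over all ordered set partitions of $\{1,\dots,n\}$ (and every block $C_j$ has weight $|C_j|\leq n\leq N_\gamma$, so no term is forced to zero). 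Applying $\psi_\cS$ to the $\mathbf{B}^+\gD'$ term produces exactly the same sum but restricted to partitions with at least two blocks: every non-empty split $\{a,b\}=\{i_1,\dots,i_n\}$ corresponds to declaring the last block to be $b$, and the preceding blocks expand as $\psi_\cS(\bullet_{a_1}\cdots\bullet_{a_{n-k}})$ via the same iterated-$\qshuffle$ formula. Subtracting leaves only the one-block partition $C_1=\{1,\dots,n\}$, giving $[e_{i_1}\cdots e_{i_n}]_\gamma=S((i_1\cdots i_n))$, which is \eqref{plan:key}.

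The hard part of the whole argument is this combinatorial matching: one must check that concatenating $\psi_\cS(\bullet_{a_1}\cdots\bullet_{a_{n-k}})$ with the single letter $S((b_1\cdots b_k))$ and summing over splits $\{a,b\}$ reproduces, with coefficient $1$, each ordered set partition of $\{1,\dots,n\}$ with $\geq 2$ blocks — no more and no fewer. Once that bijection is established, the cancellation is algebraic and the theorem follows. Everything else (Chen, character, H\"older, symmetry) is a clean pullback along the graded Hopf morphism $\psi_\cS$.
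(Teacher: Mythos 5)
Your proposal is correct, and its overall architecture (establish that $\psi_{\cS}$ is a weight-preserving Hopf algebra morphism, pull back Chen's identity, the character property and the H\"older bounds formally, get symmetry from $\cS(\gs)=\cS(\t)$ for $\gs\sim\t$) coincides with the paper's. Where you genuinely diverge is in the key identity $\psi_{\cS}(\llangle i_1\cdots i_n\rrangle)=[e_{i_1}\cdots e_{i_n}]_{\gga}$. The paper proves this by induction on $n$: it expands $\gD'(\bullet_{i_1}\cdots\bullet_{i_{n+1}})$ by peeling off the last letter, applies $\psi_{\cS}$, and verifies a cancellation among three auxiliary sums $I$, $II$, $III$ using the recursive defining relation of $\qshuffle$. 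You instead argue directly and non-recursively: the iterated quasi-shuffle $e_{i_1}\qshuffle\cdots\qshuffle e_{i_n}$ is the sum over all ordered set partitions of $\{1,\dots,n\}$ of the corresponding contracted words (the $n$-fold iteration of Proposition \ref{prop_alt_quasi_shuffle}, where the ordering constraints on the surjections are vacuous for single letters), while $\psi_{\cS}(\mathbf{B}^+\gD'(\bullet_{i_1}\cdots\bullet_{i_n}))$ contributes exactly the partitions with at least two blocks, each once — the split $\{a,b\}$ determines the last block $b$ and the ordered partition of $a$ determines the rest — so the difference is the single one-block term $[e_{i_1}\cdots e_{i_n}]_{\gga}$. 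Your observation that no block is annihilated by $[\,,]_{\gga}$ because every block has size at most $n\leq N_{\gga}$ is the one point that must be made and you make it. The bijective argument is arguably cleaner and more illuminating than the paper's induction, at the cost of requiring the explicit surjection expansion of the iterated product; the paper's induction only ever uses the two-term recursion \eqref{quasi_shuffle_recursive} and so stays closer to the defining axioms. Both are complete proofs.
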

\begin{proof}
The theorem will follow by showing that $\psi_{\cS}^*\bX$  satisfies the properties in  Definition  \ref{bracket}. This task is obtained by simply analysing some additional properties of $\psi_{\cS}$. Indeed it is sufficient to check the following two properties: firstly, for any $f\in \cF(\cA_{\gamma}^d)$ one has
\begin{equation}\label{algebraic_identity1}
\psi_{\cS}(f)\in \langle w\in W(\bA_{\gamma}^d) \colon \Vert w \Vert_{\gamma}= |f|_{\gga} \rangle\,.
\end{equation}
Secondly, for all $1 \leq n \leq N_{\gamma}$ and all $i_1, \cdots, i_n \in \{1,\cdots, d\}$ we have the identity
\begin{equation}\label{algebraic_identity2}
\psi_{\cS} (\llangle i_1 \cdots i_n \rrangle) =[e_{i_1}\cdots e_{i_n}]_{\gga}\,.
\end{equation}
Let us show firstly that from \eqref{algebraic_identity1} and \eqref{algebraic_identity2} $ (\psi_{\cS})^*\bX$ is a bracket extension. As a consequence of \eqref{algebraic_identity1}  we will obtain that $(\psi_{\cS})^*\bX$ is a well-defined $\gga$-weighted rough path over $\cH(\cA_{\gamma}^d)$ with respect to its intrinsic weight. Moreover, it comes immediately from the definition of $\cS$ that for  any couple of forests $\tau, \gs$ such that $\tau\sim \gs$ we have $\cS(\tau)= \cS(\gs)$ and consequently
\[\langle\psi_{\cS}^*\bX_{st},\gs \rangle=\langle\psi^*\bX_{st},\cS\gs \rangle=\langle \psi^*\bX_{st},\cS\tau \rangle= \langle\psi_{\cS}^*\bX_{st},\tau \rangle\,,\]
for any $s,t\in [0,T]$. Finally, we conclude from \eqref{algebraic_identity2} and the  \eqref{alpha_identity} that one has the trivial identity $\psi_{\cS}(\bullet_{(i_1\cdots i_n)})=[e_{i_1}\cdots e_{i_n}]_{\gga}$ and the final property for any $s,t\in [0,T]$
\[\langle\psi_{\cS}^*\bX_{st}, \llangle i_1 \cdots i_n \rrangle\rangle = \langle \bX_{st}, \psi_{\cS}(\llangle i_1 \cdots i_n \rrangle)\rangle = \langle\bX_{st}, [e_{i_1}\cdots e_{i_n}]_{\gga} \rangle= \langle\psi_{\cS}^*\bX_{st}, \bullet_{(i_1\cdots i_n)}\rangle\,. \]
To conclude the proof we  will show the properties \eqref{algebraic_identity1} and \eqref{algebraic_identity2} by induction. In case of \eqref{algebraic_identity1}, we will prove it  by induction on $|\cdot|_{\gamma}$, where  the case $f=\1 $ is trivial. Supposing the identity \eqref{algebraic_identity1} true for any $f'\in \cF(\cA_{\gamma}^d)$ such that $|f|_{\gamma}\leq m $ for some integer $m$, then we will prove that \eqref{algebraic_identity1} holds for any $f\in \cF(\cA_{\gamma}^d)$ such that $|f|_{\gamma}=m+1$. If $f= B^+_{(i_1\cdots i_n)}(\gs)$ the induction hypothesis implies that $\psi_{\cS}(\gs)$ is a linear combination of words $w'$ such that $\Vert w'\Vert_{\gamma}= |\gs|_{\gamma}$. By definition of $\psi$ in \eqref{quasi_arborification1} and accordingly to the maps $S$ in Definition \ref{defn_sym}, $\psi_{\cS}(f)$ will be a linear combination of the words $w'S((i_1\cdots i_n))$. Since one has the trivial identity $\Vert S((i_1\cdots i_n))\Vert_{\gamma}=n$ and $|\gs|_{\gga}+n=m+1$ we conclude in this first case. Secondly if $f= f_1f_2$ for some non empty forest. The homomorphism property of $\psi_{\cS}$ and the properties of the intrinsic weight $\Vert \cdot \Vert_{\gamma}$ with the quasi-shuffle product imply that $\psi_{\cS}(f)$ is a linear combination of words $w''$ satisfying $\Vert w''\Vert= |f_1|_{\gamma}+|f_2|_{\gamma}=|f|=m+1$. Therefore we obtain the property  \eqref{quasi_arborification1}. The identity \eqref{quasi_arborification1} will be proved by induction on the index $n$. If $n=1$ the identity \eqref{algebraic_identity2} is verified by definition. Supposing  that \eqref{algebraic_identity2} holds for some $1\leq n <N_{\gamma}$, we prove that
\[ \psi_{\cS} (\llangle i_1 \cdots i_{n+1} \rrangle)=[e_{i_1}\cdots e_{i_{n+1}}]_{\gga}\,.\]
Starting from formula \eqref{defn_alternative_bracket}, we have the following identities
\[
\begin{split}
\llangle i_1 \cdots i_{n+1} \rrangle &= \bullet_{i_1} \ldots \bullet_{i_{n+1}}- \mathbf{B}^+\big(\gD'(\bullet_{i_1}\cdots \bullet_{i_{n+1}})\big)\\&=\bullet_{i_1} \ldots \bullet_{i_{n+1}}- \mathbf{B}^+\big(\bullet_{i_{n+1}}\otimes \bullet_{i_1}\cdots \bullet_{i_{n}}+ \bullet_{i_1}\cdots \bullet_{i_{n}} \otimes \bullet_{i_{n+1}}  \\&+(\bullet_{i_{n+1}}\otimes \1)\gD'(\bullet_{i_1}\cdots \bullet_{i_{n}})+ (\1\otimes\bullet_{i_{n+1}} )\gD'(\bullet_{i_1}\cdots \bullet_{i_{n}})\big)\\&= \bullet_{i_1}\cdots \bullet_{i_{n+1}}- B_{(i_1\cdots i_n)}^+(\bullet_{i_{n+1}})-   B_{i_{n+1}}^+(\bullet_{i_1}\cdots \bullet_{i_n})\\& -\mathbf{B}^+((\bullet_{i_{n+1}}\otimes \1)\gD'(\bullet_{i_1}\cdots \bullet_{i_{n}}))-\mathbf{B}^+((\1\otimes\bullet_{i_{n+1}} )\gD'(\bullet_{i_1}\cdots \bullet_{i_{n}}))\,.
\end{split}
\]
Applying $\psi_{\cS}$ to both sides, we obtain
\begin{equation}\label{recursive_polynomial2}
\begin{split}
\psi_{\cS}(\llangle i_1 \cdots i_{n+1}\rrangle)&=\psi_{\cS}(\bullet_{i_1}\cdots \bullet_{i_n})\qshuffle_{\gamma} \;e_{i_{n+1}} -  e_{i_{n+1}}[e_{i_1}\cdots e_{i_{n}}]_{\gga}\\&-\psi_{\cS}(\bullet_{i_1}\cdots \bullet_{i_n})e_{i_{n+1}}- \psi_{\cS}(\mathbf{B}^+(\bullet_{i_{n+1}}\otimes \1)\gD'(\bullet_{i_1}\cdots \bullet_{i_n}))\\&- \psi_{\cS}(\mathbf{B}^+(\1\otimes \bullet_{i_{n+1}})\gD'(\bullet_{i_1}\cdots \bullet_{i_n}))\,.
\end{split}
\end{equation}
Writing the induction hypothesis as
\[\psi_{\cS}(\bullet_{i_1}\cdots \bullet_{i_n})=\psi_{\cS}( \mathbf{B}^+\gD'(\bullet_{i_1}\cdots \bullet_{i_n}) )+ [e_{i_1}\cdots e_{i_{n}}]_{\gga}\,,\]
we can rewrite the equation \eqref{recursive_polynomial2} as
\[\begin{split}
\psi_{\cS}(\llangle i_1 \cdots i_{n+1}\rrangle)&=\psi_{\cS}( \mathbf{B}^+\gD'(\bullet_{i_1}\cdots \bullet_{i_n}) )\qshuffle_{\gamma}\; e_{i_{n+1}}+[e_{i_1}\cdots e_{i_{n}}]_{\gga}\qshuffle_{\gamma} \;e_{i_{n+1}}\\&-  e_{i_{n+1}}[e_{i_1}\cdots e_{i_{n}}]_{\gga}  -\psi_{\cS}( \mathbf{B}^+\gD'(\bullet_{i_1}\cdots \bullet_{i_n}) )e_{i_{n+1}}-[e_{i_1}\cdots e_{i_{n}}]_{\gga}e_{i_{n+1}}  \\&- \psi_{\cS}( \mathbf{B}^+(\bullet_{i_{n+1}}\otimes \1)\gD'(\bullet_{i_1}\cdots \bullet_{i_n}))- \psi_{\cS}( \mathbf{B}^+(\1\otimes \bullet_{i_{n+1}})\gD'(\bullet_{i_1}\cdots \bullet_{i_n}))\,.
\end{split}\]
After a simplification of the words with two letters, the right-hand side becomes
\begin{equation}\label{recursive_polynomial3}
\begin{split}
&[e_{i_1}\cdots e_{i_{n+1}}]_{\gga} +\psi_{\cS}( \mathbf{B}^+\gD'(\bullet_{i_1}\cdots \bullet_{i_n}) )\qshuffle_{\gamma}\;e_{i_{n+1}}- \psi_{\cS}( \mathbf{B}^+\gD'(\bullet_{i_1}\cdots \bullet_{i_n}))e_{i_{n+1}}\\& -\psi_{\cS}( \mathbf{B}^+(\1\otimes \bullet_{i_{n+1}})\gD'(\bullet_{i_1}\cdots \bullet_{i_{n}}))- \psi_{\cS}( \mathbf{B}^+(\bullet_{i_{n+1}}\otimes \1)\gD'(\bullet_{i_1}\cdots \bullet_{i_{n}}))\,.
\end{split}
\end{equation}
Let us show that the remaining terms after $[e_{i_1}\cdots e_{i_{n+1}}]_{\gga}$ in \eqref{recursive_polynomial3} are zero. Using again the notation in the identity \eqref{key_combinatorial} and the definition of $\mathbf{B^{+}}$, we write the quantities
\[\begin{split}
I=\mathbf{B}^+\gD'(\bullet_{i_1}\cdots \bullet_{i_n})&= \sum_{\{a,b\}=\{i_1,\cdots,i_n\}}B^{(b_1 \cdots b_{n-k})}_+(\bullet_{a_1} \cdots \bullet_{a_k})\,,\\
II=\mathbf{B}^+(\bullet_{i_{n+1}}\otimes \1)\gD'(\bullet_{i_1}\cdots \bullet_{i_n})&= \sum_{\{a,b\}=\{i_1,\cdots,i_n\}}B^{(b_1 \cdots b_{n-k})}_+(\bullet_{a_1} \cdots \bullet_{a_k}\bullet_{i_{n+1}})\,,\\
III=\mathbf{B}^+(\1\otimes \bullet_{i_{n+1}})\gD'(\bullet_{i_1}\cdots \bullet_{i_n})&=\sum_{\{a,b\}=\{i_1,\cdots,i_n\}}B^{(b_1 \cdots b_{n-k}i_{n+1})}_+(\bullet_{a_1} \cdots \bullet_{a_k})\,.\end{split}\]
Applying finally the properties of $\qshuffle_{\gga}$ and the identity $e_{i_{n+1}}= \psi_{\cS}(\bullet_{i_{n+1}})$, we have
\[\begin{split}
\psi_{\cS}(\mathbf{B}^+\gD'(\bullet_{i_1}\cdots \bullet_{i_n}))\qshuffle_{\gga}\;e_{i_{n+1}} &=\sum_{\{a,b\}=\{i_1,\cdots,i_n\}}(\psi_{\cS}(\bullet_{a_1} \cdots \bullet_{a_k})[e_{b_1} \cdots e_{b_{n-k}}]_{\gga})\qshuffle e_{i_{n+1}}\\& =\sum_{\{a,b\}=\{i_1,\cdots,i_n\}}\psi_{\cS}(\bullet_{a_1} \cdots \bullet_{a_k})[e_{b_1} \cdots e_{b_{n-k}}]_{\gga}e_{i_{n+1}}\\&+\sum_{\{a,b\}=\{i_1,\cdots,i_n\}} (\psi_{\cS}(\bullet_{a_1} \cdots \bullet_{a_k} \bullet_{i_{n+1}}))[e_{b_1} \cdots e_{b_{n-k}}]_{\gga}\\&+ \sum_{\{a,b\}=\{i_1,\cdots,i_n\}}\psi_{\cS}(\bullet_{a_1} \cdots \bullet_{a_k})[e_{b_1} \cdots e_{b_{n-k}}e_{i_{n+1}}]_{\gga}\\&=\psi_{\cS}(I)e_{i_{n+1}}+\psi_{\cS}(II)+ \psi_{\cS}(III) \,.\end{split}\]
Thereby obtaining the thesis.
\end{proof}
\begin{rk}
One of the main advantage using quasi-geometric rough paths to build a bracket extensions lies in Theorem \ref{fund_thm_quasi}, where it is possible to transform geometric structure into quasi-geometric. Thus, we can in principle build some example of bracket extensions starting from a geometric rough path defined over a tensor algebra on a vector space containing $\bR^d$. Therefore it is possible to apply the wide literature of  geometric rough paths in order to construct non-trivial examples of simple bracket extensions.
\end{rk}
We conclude the section by showing the relation between the simple bracket extension in case $\gga\in (1/3,1/2)$ and the notion of Ito and Stratonovich extension of an elementary $\gga$-rough path given in  Proposition \ref{def_rough_Ito_and_strato}.  As a first observation, we simply remark that the alphabet $\bA^d_2$ in  \eqref{alphabet_ito_extension} and its commutative bracket coincide exactly with Definition \ref{alphabet_bracket} when $ \gga\in (1/3,1/2)$.  Thus one has that $X^I$, the It\^o extension of an elementary $\gga$-rough path $X=(x, \bX)$ is an example of a quasi-geometric bracket extension, accordingly to Proposition \ref{def_rough_Ito_and_strato} and Definition \ref{Def_quasi_geom_bracket}. Moreover, when  $ \gga\in (1/3,1/2)$ it follows easily from Definition \ref{defn_branched} that  there is a trivial one-to-one correspondence between $\gga$-branched rough paths $\X$  and elementary $\gga$-rough paths  $X=(x, \bX)$, which is given by the following identities
\begin{equation}\label{eq_rel_bra_elem}
x^{i}_t- x^i_s= \langle \X_{st},\bullet_i \rangle\,, \quad \bX_{st}^{ji}= \langle \X_{st},B^{+}_i(\bullet_j) \rangle\,.
\end{equation}
Putting together these two facts with Theorem \ref{big_thm}, we obtain a full characterisation of the simple bracket extension in the special case $ \gga\in (1/3,1/2)$.
\begin{prop}\label{prop_trivial_bracket}
Let $\gga\in (1/3,1/2)$ and $d\geq 1$ integer. For every  $\gga$-branched rough path $\X$ associated to an elementary rough path $X= (x, \bX)$ via the relations \eqref{eq_rel_bra_elem}, there exists a unique simple bracket extension  $\widehat{\X}$ over $\X$ which is given by $\psi_{\cS}^*X^I$, where $X^I$ is the It\^o extension of $X$, as described in Proposition \ref{prop_ito_and_strato_ext}.
\end{prop}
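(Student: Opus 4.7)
The plan is to establish existence and uniqueness separately, both relying on the machinery already developed. For existence, first observe that since $\gga \in (1/3, 1/2)$ we have $N_\gga = 2$, and the alphabet $\bA^d_\gga$ from Definition \ref{alphabet_bracket} coincides with $\bA^d_2$ from \eqref{alphabet_ito_extension}, together with their commutative brackets. Consequently, by Proposition \ref{prop_ito_and_strato_ext} the It\^o extension $X^I$ is a $\gga$-quasi-geometric rough path, i.e.\ a quasi-geometric bracket extension in the sense of Definition \ref{Def_quasi_geom_bracket}. Applying Theorem \ref{big_thm}, $\psi_{\cS}^* X^I$ is then a simple bracket extension, and the remaining task for existence is to verify that it is \emph{over} the given $\X$.

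To check the identity $\langle \psi_{\cS}^* X^I_{st}, \tau \rangle = \langle \X_{st}, \tau \rangle$ for $\tau \in \cH(\{1,\cdots,d\})$, it suffices to restrict to the four basis forests $\1, \bullet_i, \bullet_i \bullet_j, B^+_i(\bullet_j)$ of the relevant weighted subspace. The nontrivial cases are $\psi_{\cS}(B^+_i(\bullet_j)) = e_j e_i$, yielding $\langle \psi_{\cS}^* X^I_{st}, B^+_i(\bullet_j) \rangle = \bX^{ji}_{st}$ by \eqref{eq_rel_bra_elem}, and
\[ \psi_{\cS}(\bullet_i \bullet_j) = e_i \qshuffle_\gga e_j = e_i e_j + e_j e_i + [e_i, e_j]_\gga, \]
after which summing the three explicit values prescribed in \eqref{def_rough_Ito_and_strato} telescopes to $(x^i_t - x^i_s)(x^j_t - x^j_s) = \langle \X_{st}, \bullet_i \bullet_j \rangle$, matching the character property of $\X$.

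For uniqueness, I would exploit that $N_\gga = 2$ drastically restricts the domain: the space $\cH(\cA_\gga^d)^2_{|\cdot|_\gga}$ admits as basis only $\1, \bullet_i, \bullet_{(ij)}, \bullet_i \bullet_j, B^+_i(\bullet_j)$. Any simple bracket extension $\widehat{\X}$ over $\X$ has its values on these forests entirely forced: the counit gives $\langle \widehat{\X}_{st}, \1\rangle = 1$; the ``over $\X$'' hypothesis pins down $\langle \widehat{\X}_{st}, \bullet_i \rangle$ and $\langle \widehat{\X}_{st}, B^+_i(\bullet_j) \rangle$; the character identity \eqref{eq:charachter_truncated_rough_in}, applicable since $|\bullet_i \bullet_j|_\gga = 2$, determines $\langle \widehat{\X}_{st}, \bullet_i \bullet_j \rangle = (x^i_t - x^i_s)(x^j_t - x^j_s)$; and the defining relation \eqref{Ibpf} forces
\[ \langle \widehat{\X}_{st}, \bullet_{(ij)} \rangle = (x^i_t - x^i_s)(x^j_t - x^j_s) - \bX^{ij}_{st} - \bX^{ji}_{st}, \]
which is symmetric in $i \leftrightarrow j$, so part 2 of Definition \ref{bracket} holds automatically. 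Comparing these forced values with $\psi_{\cS}^* X^I$ (using $\psi_{\cS}(\bullet_{(ij)}) = [e_i, e_j]_\gga$) shows they coincide, closing the argument.

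The proof is essentially algebraic and no analytic obstacle arises: the only delicate point is the bookkeeping of $\psi_{\cS}$ on the handful of small forests and matching it to the bracket extension conditions, which I expect to be a direct check using the formulas above.
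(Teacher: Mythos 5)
Your proposal is correct and follows essentially the same route as the paper: existence via Proposition \ref{prop_ito_and_strato_ext} and Theorem \ref{big_thm}, and uniqueness by observing that for $N_\gga=2$ every basis forest of $\cH^2_{|\cdot|_\gga}(\cA_\gga^d)$ has its value forced, with $\bullet_{(ij)}$ pinned down by \eqref{Ibpf} to $(x^i_t-x^i_s)(x^j_t-x^j_s)-\bX^{ij}_{st}-\bX^{ji}_{st}=\langle X^I_{st},[e_ie_j]\rangle$. Your explicit verification that $\psi_{\cS}^*X^I$ is \emph{over} $\X$ (via $\psi_{\cS}(B^+_i(\bullet_j))=e_je_i$ and $\psi_{\cS}(\bullet_i\bullet_j)=e_i\qshuffle_\gga e_j$) is a welcome detail the paper only gestures at.
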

\begin{proof}
Combining Proposition \ref{def_rough_Ito_and_strato} and  Theorem \ref{big_thm} with the relations \eqref{eq_rel_bra_elem}, we obtain that $\psi_{\cS}^*X^I$ is a simple bracket extension over $\X$. On the other hand, starting from a generic simple bracket extension $\widehat{\X}$ over $\X$, the hypothesis $\gga\in (1/3,1/2)$ implies that $\widehat{\X}$ is branched rough path defined in the dual of the space of the following vector space
\[\cH^2_{|\cdot|_{\gga}}(\cA_{\gga}^d)= \langle \1, \bullet_i,  B^{+}_i(\bullet_j), \bullet_j \bullet_i,\bullet_{(ij)}\rangle\,,\quad  i,j\in\{1\,,\cdots \,, d \}\,. \]
In order to identify $\widehat{\X}$, it is sufficient to show that  $\langle\widehat{\X}_{st},\bullet_{(ij)}\rangle= \langle\psi_{\cS}^*X^I_{st},\bullet_{(ij)}\rangle$. Using the identity \eqref{Ibpf}  with the relations \eqref{eq_rel_bra_elem} and  \eqref{def_rough_Ito_and_strato}, for any $i,j\in\{1\,,\cdots \,, d \}$  one has
\begin{equation}\label{relation_12}
\begin{split}
\langle\widehat{\X}_{st},\bullet_{(ij)}\rangle&= \langle\X_{st}, \bullet_i \bullet_j - B^{+}_i(\bullet_j),- B^{+}_j(\bullet_i)\rangle\\&=(x^i_t-x^i_s) (x^j_{t}-x^j_s) - \bX^{ij}_{st}- \bX^{ji}_{st}\\&= \langle X^I_{st} , [e_ie_j]\rangle= \langle \psi_{\cS}^* X^I_{st} ,,\bullet_{(ij)}\rangle\,.
\end{split}
\end{equation}
Thereby completing the proof.
\end{proof}
\subsection{Quasi-geometric change of variable formula}
Recalling from Theorem \ref{chg_thm} that any choice of a  simple bracket extension implies a change of variable formula, another important consequence of Theorem \ref{big_thm} is the existence of a quasi-geometric  change of variable formula  over a $\gga$-H\"older path $x\colon [0,T]\to \bR^{d}$, $x=(x^1,\cdots, x^d)$. We conclude the section by stating and proving this formula via two different methods.

In this case, we suppose that there exists a quasi-geometric bracket extension $\bX$ over $x$, meaning that for any $i\in\{1\,, \cdots,d\} $ one has $\langle \bX_{st}, e_i\rangle= x^i_t- x^i_s$. Similarly as before, the  path  $ X\colon [0,T]\to (T(\bA_{\gga}^d))^{d}$, $X=\{X^{i}\colon [0,T]\to T(\bA_{\gga}^d)\}_{i\in\{1\,, \cdots,d\}}$ defined for any word $w\in W(\bA_{\gga}^d)$ as
\begin{equation}\label{coordinate3}
\langle w,(X_t)^i\rangle= \begin{cases} x_t^i & \text{if}\;w=\1^*\,,\\
1 & \text{if}\; w=e_i \\ 0 & \text{otherwise}\,,
\end{cases}
\end{equation}
satisfies immediately the properties of Definition \ref{defn_contr_geo}, obtaining $X\in (\cD^{N\gamma}_{\bX})^d$ for any $N\geq 1$. The definition of $X$ implies an explicit algebraic identity.
\begin{lem}\label{composition_quasi_geometric}
Let $\gga\in (0,1)$ and $d\geq 1$ integer. For any integer $1\leq N\leq N_{\gga}+1 $ and any function $\gp\in C^{N}_b(\bR^{d}, \bR)$ the weighted modelled distribution $ \gP(X_t)\in \cD^{N\gamma}_{\bX}$ is given by the following identity
\begin{equation}\label{explicit_formula_2}
\gP(X_t)= \sum_{\substack{u\in W(\bA_{\gga}^{d})\\ \Vert u\Vert_{\gamma}< N}}\frac{\partial^{u}\gp(x_t)}{u!}u\,,
\end{equation}
where for any word  $u\in W(\bA_{\gga}^{d})$ we define 
\[\partial^u\gp(x_t):= \begin{cases} \gp(X_t) & \text{if}\;u=\1\,,\\
\partial^{k_1+\cdots +k_n}\gp(x_t) & \text{if} \;u=k_{1}\cdots\,k_{n} ,
\end{cases}\; u!:=  \begin{cases} 1& \text{if}\;u=\1\,,\\
\prod_{i=1}^n k_i! & \text{if}\; u=k_{1}\cdots\,k_{n} \,.
\end{cases}\]
\end{lem}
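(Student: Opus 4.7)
The plan is to substitute the explicit form of $X$ from \eqref{coordinate3} into the composition formula \eqref{composition_ope2} and then reduce the identification to a single combinatorial identity for quasi-shuffle powers of single letters.

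From \eqref{coordinate3} one reads off $\langle\1^*,X^i_t\rangle=x^i_t$ and $\langle u^*,X^i_t\rangle=0$ for any non-empty word $u\neq e_i$, so each component of the positive part satisfies $(X^+_t)_i=X^i_t-x^i_t\1=e_i$, the single letter in $\bA^d_\gamma$ corresponding to the $i$-th canonical basis vector. Substituting into \eqref{composition_ope2} with $e=d$ and with the intrinsic product on $\qsT(\bA^d_\gamma)$ being the quasi-shuffle $\qshuffle_\gamma$, this gives
\[
\gP(X_t)=\sum_{K\in\bN^d}\frac{\partial^K\gp(x_t)}{K!}\,[P_K]_{<N},\qquad P_K:=e_1^{\qshuffle_\gamma K_1}\qshuffle_\gamma\cdots\qshuffle_\gamma e_d^{\qshuffle_\gamma K_d},
\]
where the membership $\gP(X_t)\in\cD^{N\gamma}_{\bX}$ is furnished directly by Proposition \ref{composition}.

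The only non-trivial step is then the combinatorial identity
\[
P_K=\sum_{\substack{u=k_1\cdots k_m\in W(\bA^d_\gamma)\\ k_1+\cdots+k_m=K}}\frac{K!}{u!}\,u,\qquad K\in\bN^d.
\]
To prove it I would first establish, by induction on $n$ and iteration of the binary rule \eqref{quasi_shuffle_recursive}, the expansion of the quasi-shuffle of $n$ single letters as
\[
a_1\qshuffle_\gamma\cdots\qshuffle_\gamma a_n=\sum_{(B_1,\dots,B_m)}[a_{B_1}]_{\gamma}\cdots[a_{B_m}]_{\gamma},
\]
where the sum runs over all ordered set partitions of $\{1,\dots,n\}$ and $[a_B]_\gamma$ denotes the commutative bracket of the letters indexed by $B$, equal by \eqref{alpha_identity} and \eqref{eq:def_[]N} to the multi-index $\sum_{j\in B}a_j$ whenever this sum lies in $\bA^d_\gamma$ and vanishing otherwise. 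This expansion is a standard iteration of Proposition \ref{prop_alt_quasi_shuffle}.

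I would then specialise the expansion to the list of $|K|$ letters consisting of $K_j$ copies of $e_j$ for each $j=1,\dots,d$ and group the ordered set partitions by the sequence of multi-indices $(k_1,\dots,k_m)$ they produce through the brackets. The number of ordered set partitions realising a prescribed such sequence is the multinomial
\[
\prod_{j=1}^d\binom{K_j}{(k_1)_j,\dots,(k_m)_j}=\frac{K!}{k_1!\cdots k_m!}=\frac{K!}{u!},
\]
which is precisely the coefficient of $u=k_1\cdots k_m$ in $P_K$, proving the identity. The constraint $k_i\in\bA^d_\gamma$ on the right-hand side is automatic from \eqref{eq:def_[]N}: bracket sequences whose contracted letter would exceed weight $N_\gamma$ vanish and do not contribute. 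Plugging the identity back into the expression for $\gP(X_t)$ and observing that $\Vert u\Vert_\gamma=|k_1+\cdots+k_m|=|K|$, so that $[P_K]_{<N}=P_K$ precisely when $|K|<N$ (equivalently $\Vert u\Vert_\gamma<N$), produces formula \eqref{explicit_formula_2}. The main obstacle is the multinomial count in this last step; everything else is either direct substitution or routine induction.
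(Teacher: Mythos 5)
Your proof is correct, and it shares the paper's skeleton — substitute the explicit controlled path \eqref{coordinate3} into \eqref{composition_ope2} and reduce everything to a combinatorial identity for the quasi-shuffle powers $e_1^{\qshuffle_\gga k_1}\qshuffle_\gga\cdots\qshuffle_\gga e_d^{\qshuffle_\gga k_d}$ — but you prove that identity by a genuinely different route. The paper proceeds by induction on the dimension $d$: the base case $d=1$ is handled by transporting the shuffle identity $a^{\shuffle k}=k!\,a\cdots a$ through Hoffman's isomorphism $\exp$/$\log$ of Theorem \ref{hoffmann_iso}, yielding $1^{\qshuffle_\gga k}=\sum_{I\in C(k)}\frac{k!}{I!}[1\cdots1]_{\gga,I}$, and the inductive step peels off the last coordinate and recombines via the surjection formula of Proposition \ref{prop_alt_quasi_shuffle}. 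You instead iterate Proposition \ref{prop_alt_quasi_shuffle} once and for all to expand the quasi-shuffle of $|K|$ single letters as a sum over ordered set partitions, then count the partitions realising a fixed word $u=k_1\cdots k_m$ by the product of multinomials $\prod_j\binom{K_j}{(k_1)_j,\dots,(k_m)_j}=K!/u!$. This is a single closed-form argument with no induction on $d$ and no appeal to the $\exp$/$\log$ machinery, and it makes the provenance of the coefficient $1/u!$ transparent; the paper's version has the advantage of reusing structural results already in place (Theorem \ref{hoffmann_iso}) and of isolating the one-dimensional computation \eqref{identity_quasi_shuffle}, which is reused later in the proof of \eqref{last_equality}. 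Your remarks on the vanishing of brackets exceeding weight $N_\gga$ and on the projection $[\,\cdot\,]_{<N}$ acting on homogeneous terms are both accurate, so no gap remains.
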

\begin{proof}
Following the definition of $\gP$ in \eqref{composition_ope2} and the definition of $X$ in \eqref{coordinate3}, we obtain
\[\gP(X_t)=\sum_{0\leq |k|< N}\frac{\partial^k\gp(x_t)}{k_1!\cdots k_{d}!}e_1^{\qshuffle_{\gga} k_1}\qshuffle_{\gga}\cdots \qshuffle_{\gga} e_d^{\qshuffle_{\gga} k_{d}}\,,\]
where  $e_1\,,\cdots\,, e_d\in \bA_{\gga}^{d}$ is the canonical basis and $k=(k_1,\cdots, k_d)$ is generic multi index. Thus we obtain the thesis  as long as we have
\begin{equation}\label{comb_lemma}
\sum_{0\leq |k|< N}\frac{\partial^k\gp(x_t)}{k_1!\cdots k_{d}!}e_1^{\qshuffle_{\gga} k_1}\qshuffle_{\gga}\cdots \qshuffle_{\gga} e_d^{\qshuffle_{\gga} k_{d}}\,= \sum_{\substack{u\in W(\bA_{\gga}^{d})\\ \Vert u\Vert_{\gamma}< N}}\frac{\partial^{u}\gp(x_t)}{u!}u\,,
\end{equation}
We check this identity by induction on $d$. In case $d=1$ the underlying alphabet becomes  $\bA_{\gga}^{1}=\{1\,,\cdots \,, N_{\gga}\}$. Recalling the standard identity using the shuffle product $\shuffle$ 
\[a^{\shuffle k}=k!\;\underset{k \text{ times}}{\underbrace{a\cdots a}}\,,\]
for any letter $a$ and any alphabet $A$, we use the maps $\exp$  and $\log$ defined in Theorem \ref{hoffmann_iso} to obtain for  any integer $k\geq 0$ the following identity
\begin{equation}\label{identity_quasi_shuffle}
\begin{split}
1^{\qshuffle_{\gga} k}&=\exp (\log(1^{\qshuffle_{\gga} k}))=\exp ((\log(1))^{\shuffle k})\\&= k!\exp (\underset{k \text{ times}}{\underbrace{1\cdots 1}})= \sum_{I\in C(k)}\frac{k!}{I!}[\underset{k \text{ times}}{\underbrace{1\cdots 1}}]_{\gga,I}\,.
\end{split}
\end{equation}
Summing this identity over $k$ we obtain the base of the induction
\[\sum_{k=0}^{N-1}\frac{\partial^k\gp(x_t)}{k!}1^{\qshuffle_{\gga} k}=\sum_{k=0}^{N-1}\sum_{I\in C(k)}\frac{\partial^k\gp(x_t)}{I!}[\underset{k \text{ times}}{\underbrace{1\cdots 1}}]_{\gga,I} =\sum_{\substack{u\in W(\bA_{\gga}^{d})\\ \Vert u\Vert_{\gamma}< N}}\frac{\partial^{u}\gp(x_t)}{u!}u\,,\]
because the couples $(k, I)$ are in  one-to-one correspondence with the words $\Vert u\Vert_{\gamma}< N$, thanks to the hypothesis $1\leq N\leq N_{\gga}+1 $. Supposing the identity \eqref{comb_lemma} true for $d-1$, we write the left-hand side of \eqref{comb_lemma} as
\[\sum_{n=0}^{N-1} \frac{1}{n!}\bigg(\sum_{\substack{k\in \bN^{d-1} \\|k|< N-n}}\frac{\partial^{k}(\partial^n_{d}\gp)(x_t)}{{k_1!\cdots k_{d-1}!}}e_1^{\qshuffle_{\gga} k_1}\qshuffle_{\gga}\cdots \qshuffle_{\gga} e_{d-1}^{\qshuffle_{\gga} k_{d-1}}\bigg)\qshuffle_{\gga}  e_d^{\qshuffle_{\gga} k_{d}}\,,\]
Applying the induction hypothesis and the identity \eqref{identity_quasi_shuffle} on the last term in the external product, this sum becomes 
\begin{equation}\label{exact_induction2}
\sum_{n=0}^{N-1} \sum_{I\in C(n)}\sum_{\substack{u'\in W((\bA_{\gga}^{d-1})')\\ \Vert u'\Vert_{\gamma}< N-n}}\frac{\partial^{u'}(\partial^n_{d}\gp)(x_t)}{u'!I!}u'\qshuffle_{\gga} [\underset{n \text{ times}}{\underbrace{e_d\cdots e_d}}]_{\gga,I}\,,
\end{equation}
where $(\bA_{\gga}^{d-1})'\subset \bA_{\gga}^{d}$ is the alphabet obtained by adding a zero component to each term of $\bA_{\gga}^{d-1}$. Using the direct definition of the quasi-shuffle product given in \eqref{defn_alt_quasi_shuffle}, the sum \eqref{exact_induction2} becomes
\[\sum_{n=0}^{N-1} \sum_{I\in C(n)}\sum_{\substack{u'\in W((\bA_{\gga}^{d-1})')\\ \Vert u'\Vert_{\gamma}< N-n}}\frac{\partial^{u'}\partial^n_{d}\gp(x_t)}{u'!I!}\sum_{f\in\bS_{\abs{u'},|I|}} f(u'[\underset{n \text{ times}}{\underbrace{e_d\cdots e_d}}]_{\gga, I})\,,\]
where $\bS_{\abs{u'},|I|}$ is the set of all surjections $f\colon \{1\,, \cdots \,, \abs{u'}+|I|\}\to \{1\,, \cdots \,, k\}$  satisfying $f(1)<\cdots < f(\abs{u'}), f(\abs{u'} + 1) <\cdots < f(\abs{u'}+|I|)$ for some integer $k$ such that $\max(\abs{u'},|I|)\leq k\leq \abs{u'}+|I|$. Since all the letters in  $u'$ have the $d$-th component equal to zero, the elements $(n,I, u', f)$ above are in  one-to-one correspondence with all words $u\in W(\bA_{\gga}^{d})$ such that $\Vert u\Vert_{\gga}<N$.  Moreover it follows from their definitions that  we have $\partial^{u'}\partial^n_{d}\gp= \partial^{u}\gp$ and $u! = u'!I!$. Thus we complete the induction.
\end{proof}
From the explicit formula of $\Phi(X_t)$, we  deduce  the change of variable formula.
\begin{thm}\label{quasi_shuffle_change_thm}
Let $x\colon [0,T]\to \bR^{d}$ be a generic $\gga$-H\"older path for any  $\gga\in (0,1)$ and $d\geq 1$ integer such that there exists a $\gamma$-quasi-geometric rough path $\bX$ over $x$. For any $\gp\in C_b^{N_{\gamma}+1}(\bR^d,\bR)$ one has 
\begin{equation}\label{quasi_shuffle_change}
\gp(x_t)=  \gp(x_s) +\sum_{n=1}^{N_{\gga}}\sum_{\substack{ |k|=n}}\frac{1}{k!} \int_s^t\partial^{k}\gP(X_r)d^g \bX^{k}_r\,,
\end{equation}
where $k$ is generic multi-index and $\partial^{k}\gP(X_r)$ is the lifting of  is the weighted controlled rough path obtained by composition of $X$ in \eqref{coordinate3} with $\partial^{k}\gp\colon \bR^d\to \bR$  as defined in Proposition \eqref{composition}.
\end{thm}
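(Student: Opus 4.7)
The plan is to follow the strategy of Theorem \ref{chg_thm}: expand both sides of \eqref{quasi_shuffle_change} to an accuracy $o(|t-s|)$, match them, and deduce that their difference, viewed as a function of $t$, has vanishing $1$-variation hence is constant, concluding from the value at $t=0$.

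First, since $\partial^k\gp\in C^{N_\gga+1-|k|}_b$, applying Proposition \ref{composition} and Lemma \ref{composition_quasi_geometric} to $\partial^k\gp$ gives $\partial^k\gP(X)\in\cD^{(N_\gga-|k|+1)\gga}_{\bX}$ with explicit form
\[\partial^k\gP(X_t)=\sum_{\Vert v\Vert_\gga\leq N_\gga-|k|}\frac{\partial^{v+k}\gp(x_t)}{v!}\,v,\]
where $v+k$ is the sum of the multi-index $k$ with the total multi-index of the word $v$. Since $\cI_k(w):=wk$ is an integration map of order $|k|$ for $\Vert\cdot\Vert_\gga$, the local expansion \eqref{equ:rough_int_est} of Proposition \ref{rough_integration} yields
\[\int_s^t\partial^k\gP(X_r)\,d^g\bX^k_r=\sum_{\Vert v\Vert_\gga\leq N_\gga-|k|}\frac{\partial^{v+k}\gp(x_s)}{v!}\,\langle\bX_{st},vk\rangle+O(|t-s|^{(N_\gga+1)\gga}).\]
Summing with weight $\frac{1}{k!}$ over multi-indices $k$ and reindexing by $w=vk$ (so $k$ is the last letter of $w$, $v$ its prefix, and $w!=v!\,k!$, $\partial^w\gp=\partial^{v+k}\gp$), the right-hand side of \eqref{quasi_shuffle_change} becomes
\[\sum_{\substack{w\in W(\bA_\gga^d)\setminus\{\1\}\\ \Vert w\Vert_\gga\leq N_\gga}}\frac{\partial^w\gp(x_s)}{w!}\,\langle\bX_{st},w\rangle+O(|t-s|^{(N_\gga+1)\gga}).\]

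For the left-hand side, standard Taylor expansion together with the $\gga$-H\"older regularity of $x$ gives
\[\gp(x_t)-\gp(x_s)=\sum_{1\leq|\gb|\leq N_\gga}\frac{\partial^\gb\gp(x_s)}{\gb!}(x_t-x_s)^\gb+O(|t-s|^{(N_\gga+1)\gga}),\]
and the quasi-shuffle character property \eqref{eq:charachter_truncated_rough_in} of $\bX$ converts each monomial into $(x_t-x_s)^\gb=\langle\bX_{st},\,e_1^{\qshuffle_\gga\gb_1}\qshuffle_\gga\cdots\qshuffle_\gga e_d^{\qshuffle_\gga\gb_d}\rangle$. Pairing the algebraic identity \eqref{comb_lemma}, established inside the proof of Lemma \ref{composition_quasi_geometric}, with $\bX_{st}$ rewrites the Taylor sum in the very same word basis $\sum_w\frac{\partial^w\gp(x_s)}{w!}\langle\bX_{st},w\rangle$, so both expansions agree exactly.

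Setting $A_t:=\gp(x_t)-\gp(x_0)-\sum_{n=1}^{N_\gga}\sum_{|k|=n}\frac{1}{k!}\int_0^t\partial^k\gP(X_r)\,d^g\bX^k_r$, the two displays give $|A_t-A_s|=O(|t-s|^{(N_\gga+1)\gga})$ with exponent strictly greater than $1$; hence $A$ is constant, and from $A_0=0$ the identity \eqref{quasi_shuffle_change} follows. The only nontrivial step is the combinatorial reindexing in the Taylor expansion: the passage between monomial and word bases rests precisely on \eqref{comb_lemma}, but that identity is already part of the toolbox, so the remaining bookkeeping is routine. One could alternatively combine Theorem \ref{big_thm} with Kelly's Theorem \ref{chg_thm}, but this would demand an additional lemma matching branched rough integrals against $\psi_\cS^*\bX$ with quasi-geometric rough integrals against $\bX$, so the direct approach above is more efficient.
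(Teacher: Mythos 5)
Your proof is correct and follows essentially the same route as the paper's: expand each rough integral via Lemma \ref{composition_quasi_geometric} and \eqref{equ:rough_int_est}, reindex the sum over $(k,v)$ by the word $w=vk$, match against the expansion of $\gp(x_t)-\gp(x_s)$ in the same word basis, and conclude by the constancy of the $o(|t-s|)$-increment function. The only (immaterial) difference is that for the left-hand side the paper directly invokes the controlled-rough-path property of $\gP(X)$ from Proposition \ref{composition}, whereas you re-derive that expansion from the Taylor formula, the character property of $\bX$ and the identity \eqref{comb_lemma}; the paper also records your ``alternative'' route through Theorems \ref{big_thm} and \ref{chg_thm} as a second proof.
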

\begin{proof}
For any $k\in \bA_{\gga}^{d}$ such that $|k|=n$ we can apply  the explicit formula \eqref{explicit_formula_2} to $\partial^{k}\gP(X_r)$ obtaining
\[\partial^{k}\gP(X_r)=\sum_{\substack{u\in W(\bA_{\gga}^{d})\\ \Vert u\Vert_{\gamma}< N_{\gga}+1-n}}\frac{\partial^{u}(\partial^{k}\gp)(x_t)}{u!}u\,.\]
Applying Proposition \eqref{rough_integration} and  \eqref{equ:rough_int_est} to the geometric rough integral we deduce also
\begin{equation}\label{equations_quasi}
\frac{1}{k!}\int_s^t\partial^{k}\gP(X_r)d \bX^{k}_r= \sum_{\substack{u\in W(\bA_{\gga}^{d})\\ \Vert u\Vert_{\gamma}< N_{\gga}+1-n}}\frac{\partial^{u}(\partial^{k}\gp)(x_t)}{u!k!}\langle \bX_{st}, u k\rangle + o(\abs{t-s})\,.
\end{equation}
Summing both sides of \eqref{equations_quasi} over $n$ and all  $k$, we obtain the trivial identity
\begin{equation}\label{equations_quasi2}
\begin{split}
&\sum_{n=1}^{N_{\gga}}\sum_{\substack{k \in \bA_{\gga}^d\\ |k|=n}}\frac{1}{k!}\int_s^t\partial^{k}\gP(X_r)d \bX^{k}_r= \\&= \sum_{n=1}^{N_{\gga}}\sum_{\substack{k \in \bA_{\gga}^d\\ |k|=n}}\sum_{\substack{u\in W(\bA_{\gga}^{d})\\ \Vert u\Vert_{\gamma}< N_{\gga}+1-n}}\frac{\partial^{u}(\partial^{k}\gp)(x_t)}{u!k!}\langle \bX_{st}, u k\rangle + o(\abs{t-s})\\&=\sum_{\substack{w\in W(\bA_{\gga}^{d})\\ 1\leq \Vert w\Vert_{\gamma}< N_{\gga}+1}}\frac{\partial^{w}\gp(x_t)}{w!}\langle \bX_{st}, w\rangle+ o(\abs{t-s})\,.
\end{split}
\end{equation}
On the other hand, following Definition \ref{defn_contr_geo} of weighted controlled path to the quantity  $\gp(x_t)=\langle \1^*, \gP(X_t)\rangle$ we have
\begin{equation}\label{equations_quasi3}
\begin{split}
\gp(x_t)-\gp(x_s)&=\sum_{\substack{u\in W(\bA_{\gga}^{d})\\ 1\leq \Vert u\Vert_{\gamma}< N_{\gga}+1}} \langle u^*, \gP(X_s)\rangle\langle \bX_{st}, u\rangle+ o(\abs{t-s})\\&=\sum_{\substack{u\in W(\bA_{\gga}^{d})\\ 1\leq \Vert u\Vert_{\gamma}< N_{\gga}+1}}\frac{\partial^{u}\gp(x_t)}{u!}\langle \bX_{st}, u\rangle+ o(\abs{t-s})\,.
\end{split}
\end{equation}
Subtracting the equation \eqref{equations_quasi2} from  \eqref{equations_quasi3} we obtain
\[ \gp(x_t)-\gp(x_s)- \sum_{n=1}^{N_{\gga}}\sum_{\substack{k \in \bA_{\gga}^d\\ |k|=n}}\frac{1}{k!}\int_s^t\partial^{k}\gP(X_r)d \bX^{k}_r= o(|t-s|)\,.\]
Therefore we can conclude as in the proof of Theorem \ref{chg_thm}.
\end{proof}
The formula \eqref{quasi_shuffle_change} can be obtained  as a trivial consequence of the Theorems \ref{chg_thm} and \ref{big_thm}, provided an explicit identity between branched and geometric rough integrals.
\begin{prop}
Let $x\colon [0,T]\to \bR^{d}$ be a generic $\gga$-H\"older path for any  $\gga\in (0,1)$ and $d\geq 1$. Then for any $\gga$-quasi-geometric rough path $\bX$ over $x$ and all $i_1, \cdots, i_n \in \{1,\cdots, d\}$  we have the identity 
\begin{equation}\label{ident_bra_quasi}
\int_s^t \gP_b(X_r) d^b (\psi_{\cS}^*\bX)^{(i_1\cdots i_n)}_r=\int_s^t \gP_g(X_r) d^g \bX^{[e_{i_1}\cdots e_{i_n}]_{\gga}}_r\,,
\end{equation}
where $\gP_b$ and $\gP_b$ are respectively the lifting of $\gp\in C_b^{N_{\gga}+1-n}(\bR^d,\bR)$  composed with the weighted controlled rough paths \eqref{coordinate2} and \eqref{coordinate3}. Moreover, we adopted the convention $(i_1\cdots i_n)=i_1$ and $[e_{i_1}\cdots e_{i_n}]_{\gga}=e_{i_1}$ when $n=1$.
\end{prop}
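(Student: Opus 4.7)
The plan is to expand both sides of \eqref{ident_bra_quasi} via Proposition \ref{rough_integration}, translate the branched Riemann sum to a quasi-geometric one through the morphism $\psi_{\cS}$, and identify the result term by term with the quasi-geometric Riemann sum. More precisely, unwinding the definitions, the Riemann sum underlying the left hand side reads
\[
\sum_{[s,u]\in\pi}\sum_{\substack{\tau\in\cF(\cA_{\gga}^d)\\ |\tau|_{\gga}\le N_{\gga}-n}}\langle \tau^*,\gP_b(X_s)\rangle\,\langle (\psi_{\cS}^*\bX)_{su}, B^+_{(i_1\cdots i_n)}(\tau)\rangle,
\]
while the right hand side reads
\[
\sum_{[s,u]\in\pi}\sum_{\substack{v\in W(\bA_{\gga}^d)\\ \|v\|_{\gga}\le N_{\gga}-n}}\langle v^*,\gP_g(X_s)\rangle\,\langle \bX_{su},v\cdot[e_{i_1}\cdots e_{i_n}]_{\gga}\rangle.
\]
(When $n=1$ the maps $B^+_{(i_1)}$ and $\cI_{[e_{i_1}]_{\gga}}$ are the order-one integrations $B^+_{i_1}$ and $\cI_{e_{i_1}}$, so the stated convention is consistent with the usual branched/geometric integrals.)

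The first step is to move $\psi_{\cS}$ to the rough-path side of the pairing. Using $\psi(B_a^+(f))=\psi(f)\,a$ from \eqref{quasi_arborification1}, the fact that $\cS$ acts only on decorations, and the identity $\cS((i_1\cdots i_n))=[e_{i_1}\cdots e_{i_n}]_{\gga}$ recalled in \eqref{alpha_identity}, one obtains
\[
\psi_{\cS}\bigl(B^+_{(i_1\cdots i_n)}(\tau)\bigr)\;=\;\psi_{\cS}(\tau)\cdot[e_{i_1}\cdots e_{i_n}]_{\gga},
\]
so by definition of $\psi_{\cS}^*$ the inner pairing on the left equals $\langle \bX_{su},\psi_{\cS}(\tau)\cdot[e_{i_1}\cdots e_{i_n}]_{\gga}\rangle$.

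The second, and central, step is the pointwise identity
\[
\psi_{\cS}\bigl(\gP_b(X_s)\bigr)\;=\;\gP_g(X_s)\quad\text{in }\hat T(\bA_{\gga}^d),
\]
to be understood after truncation at weight $\le N_{\gga}-n$. This truncation behaves correctly because of property \eqref{algebraic_identity1} already established in the proof of Theorem \ref{big_thm}: $\psi_{\cS}$ is degree-preserving between $(\cH(\cA_{\gga}^d),|\cdot|_{\gga})$ and $(\hat T(\bA_{\gga}^d),\|\cdot\|_{\gga})$. For the identity itself, one specializes formula \eqref{composition_ope2} to the branched controlled rough path \eqref{coordinate2}, for which $X_s^+=\bullet_i$, to get $\gP_b(X_s)=\sum_{k}\frac{\partial^k\gp(x_s)}{k!}\bullet_1^{k_1}\cdots\bullet_d^{k_d}$; applying the Hopf-algebra morphism $\psi_{\cS}$ and using $\psi_{\cS}(\bullet_i)=e_i$ together with \eqref{quasi_arborification2} gives $\sum_k \frac{\partial^k\gp(x_s)}{k!}\,e_1^{\qshuffle_{\gga}k_1}\qshuffle_{\gga}\cdots\qshuffle_{\gga}e_d^{\qshuffle_{\gga}k_d}$, which by the computation carried out in Lemma \ref{composition_quasi_geometric} is exactly $\gP_g(X_s)$.

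Combining both steps, the left Riemann sum becomes
\[
\sum_{[s,u]\in\pi}\langle \bX_{su},\psi_{\cS}(\gP_b(X_s))\cdot[e_{i_1}\cdots e_{i_n}]_{\gga}\rangle
=\sum_{[s,u]\in\pi}\langle \bX_{su},\gP_g(X_s)\cdot[e_{i_1}\cdots e_{i_n}]_{\gga}\rangle,
\]
which after expanding $\gP_g(X_s)$ on the dual basis of words coincides with the right Riemann sum; passing to the limit $|\pi|\to 0$ yields \eqref{ident_bra_quasi}. The main obstacle is purely bookkeeping: verifying that the sum over forests with bounded $|\cdot|_{\gga}$-weight, after pushing through $\psi_{\cS}$, recovers each word exactly once with the coefficient prescribed by $\gP_g(X_s)$. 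This is ensured by the fact that $\psi_{\cS}$ is weight-preserving and by the explicit description of $\gP_g(X_s)$ in Lemma \ref{composition_quasi_geometric}; no further combinatorial argument is needed.
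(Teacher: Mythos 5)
Your proof is correct and follows essentially the same route as the paper: expand both integrals via Proposition \ref{rough_integration}, push $\psi_{\cS}$ through the pairing using \eqref{quasi_arborification1} and \eqref{alpha_identity}, and match the two expansions via the combinatorial identity \eqref{comb_lemma} of Lemma \ref{composition_quasi_geometric}. The only (harmless, slightly cleaner) difference is that you identify the Riemann sums exactly term by term through the single identity $\psi_{\cS}(\gP_b(X_s))=\gP_g(X_s)$, whereas the paper compares the local expansions of the two integrals modulo $o(|t-s|)$ and invokes \eqref{identity_sum} and \eqref{comb_lemma} explicitly.
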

\begin{proof}
Accordingly to Proposition \ref{rough_integration} we have the identity modulo $o(|t-s|)$
\[\begin{split}
\int_s^t \gP_b(X_r)  d (\psi_{\cS}^*\bX)^{(i_1\cdots i_n)}_r&= \sum_{m=0}^{N_{\gga}-n}\frac{1}{m!}\sum_{j_1,\cdots, j_{m}=1} ^{d} \partial_{j_1}\cdots \partial_{j_n }\gp(x_s)\langle \psi_{\cS}^*\bX_{st}, B^{+}_{(i_1\cdots i_n)}(\bullet_{j_1}\cdots \bullet_{j_{m}}) \rangle\,,\\\int_s^t \gP_g(X_r) d^g \bX^{[e_{i_1}\cdots e_{i_n}]_{\gga}}_r&= \sum_{\substack{u\in W(\bA_{\gga}^{d})\\  \Vert u\Vert_{\gamma}< N_{\gga}+1-n}}\frac{\partial^{u}\gp(x_s)}{u!}\langle \bX_{st}, u [e_{i_1}\cdots e_{i_n}]_{\gga}\rangle\,.
\end{split}\]
By definition of $\psi_{\cS}$,  for any  $j_1\,,\cdots \,, j_m$ one has
\[\langle \psi_{\cS}^*\bX_{st}, B^{+}_{(i_1\cdots i_n)}(\bullet_{j_1}\cdots \bullet_{j_{m}}) \rangle= \langle\bX_{st},(e_{j_1} \qshuffle_{\gga} \cdots \qshuffle_{\gga}\; e_{j_{m}} ) [e_{i_1}\cdots e_{i_n}]_{\gga}\rangle\,.\]
Using the combinatorial identities \eqref{identity_sum} and  \eqref{comb_lemma} we can write 
\begin{equation}\label{last_equality}
\begin{split}
&\sum_{m=0}^{N_{\gga}-n}\frac{1}{m!}\sum_{j_1,\cdots, j_{m}=1} ^{d} \partial_{j_1}\cdots \partial_{j_m}\gp(x_s)\langle\bX_{st},(e_{j_1} \qshuffle_{\gga} \cdots \qshuffle_{\gga}\; e_{j_{m}} ) [e_{i_1}\cdots e_{i_n}]_{\gga}\rangle\\&=\sum_{0\leq |k|< N_{\gga}+1-n}\frac{\partial^k\gp(x_s)}{k_1!\cdots k_{d}!}\langle\bX_{st},(e_1^{\qshuffle_{\gga} k_1}\qshuffle_{\gga}\cdots \qshuffle_{\gga} e_d^{\qshuffle_{\gga} k_{d}}) [e_{i_1}\cdots e_{i_n}]_{\gga}\rangle\\&=\sum_{\substack{u\in W(\bA_{\gga}^{d})\\  \Vert u\Vert_{\gamma}< N_{\gga}+1-n}}\frac{\partial^{u}\gp(x_s)}{u!}\langle\bX_{st},u[e_{i_1}\cdots e_{i_n}]_{\gga}\rangle\,.
\end{split}
\end{equation}
Thereby obtaining that the difference between the two terms in in \eqref{ident_bra_quasi} is of order  $o(|t-s|)$. The identity \eqref{ident_bra_quasi} comes as  trivial consequence.
\end{proof}
\begin{proof}[An alternative proof of Theorem \ref{quasi_shuffle_change_thm}] Thanks to the Theorem  \ref{big_thm}, the rough path  $\psi_{\cS}^*\bX$ is a simple bracket extension over $x$. Thus the formula \eqref{change_of_variable} reads as
\[\begin{split}
&\gp(x_t)=\\&\gp(x_s)+ \sum_{i=1}^d\int_s^t \partial_i\gP(X_r) d^b(\psi_{\cS}^*\bX)^i_r+ \sum_{n=2}^{N_{\gamma}} \frac{1}{n!} \sum_{i_1,\cdots, i_n=1}^d\int_s^t\partial_{i_1}\cdots\partial_{i_n}\gP(X_r)d^b(\psi_{\cS}^*\bX)^{(i_1 \cdots i_n)}_r\,,
\end{split}
\]
Using the equalities \eqref{ident_bra_quasi} and  \eqref{identity_sum}, we conclude immediately.
\end{proof}

\begin{rk}\label{N=2}
Accordingly to Proposition \eqref{prop_trivial_bracket}, in case $\gga\in (1/3,1/2) $ there exists a unique notion of simple bracket extension over a branched rough paths which is associated to an elementary rough path. Therefore both formulae \eqref{quasi_shuffle_change} and \eqref{change_of_variable} collapse to a unique one which is exactly the one obtained in \cite[Prop. 5.6]{Friz2020course} by means of the notion of \emph{reduced rough paths}. 
\end{rk}

\bibliographystyle{abbrv}
\bibliography{bibliography}

\end{document}